\numberwithin{equation}{section}
\newtheorem{theorem}{Theorem}[section]
\newtheorem{corollary}[theorem]{Corollary}
\newtheorem{lemma}[theorem]{Lemma}
\theoremstyle{definition}
\newtheorem{remark}[theorem]{Remark}
\theoremstyle{definition}
\newtheorem{definition}[theorem]{Definition}
\theoremstyle{definition}
\def\dashint{\operatorname%
{\,\,\text{\bf-}\kern-.98em\DOTSI\intop\ilimits@\!\!}}
\def\\det{\text{\det}}
\def\Xint#1{\mathchoice
 {\XXint\displaystyle\textstyle{#1}}%
 {\XXint\textstyle\scriptstyle{#1}}%
 {\XXint\scriptstyle\scriptscriptstyle{#1}}%
 {\XXint\scriptscriptstyle\scriptscriptstyle{#1}}%
 \!\int}
\def\XXint#1#2#3{{\setbox0=\hbox{$#1{#2#3}{\int}$}
  \vcenter{\hbox{$#2#3$}}\kern-.5\wd0}}
\def\dashint{\Xint-}
\def\.5{\frac{1}{2}}
\newcommand{\RN}[1]{%
  \textup{\uppercase\expandafter{\romannumeral#1}}%
}
\renewcommand{\epsilon}{\varepsilon}
\newcounter{marnote}
\begin{document}

\title[Anisotropic Muckenhoupt weights and their applications]{On a class of anisotropic Muckenhoupt weights and their applications to $p$-Laplace equations}

\author[C.X. Miao]{Changxing Miao}
\address[C.X. Miao] {Institute of Applied Physics and Computational Mathematics, P.O. Box 8009, Beijing, 100088, China.}
%\address{2. Institute of Applied Physics and Computational Mathematics, P.O. Box 8009, Beijing, 100088, China.}
\email{miao\_changxing@iapcm.ac.cn}

\author[Z.W. Zhao]{Zhiwen Zhao}

\address[Z.W. Zhao]{1. School of Mathematics and Physics, University of Science and Technology Beijing, Beijing 100083, China.}
\address{2. Beijing Computational Science Research Center, Beijing 100193, China.}
\email{zwzhao365@163.com}

%\footnote{}

\date{\today} % delete this line to display the current date

%%% BEGIN DOCUMENT

\maketitle
%\tableofcontents
\begin{abstract}
In this paper, a class of anisotropic weights having the form of $|x'|^{\theta_{1}}|x|^{\theta_{2}}|x_{n}|^{\theta_{3}}$ in dimensions $n\geq2$ is considered, where $x=(x',x_{n})$ and $x'=(x_{1},...,x_{n-1})$. We first find the optimal range of $(\theta_{1},\theta_{2},\theta_{3})$ such that this type of weights belongs to the Muckenhoupt class $A_{p}$. Then we further study its doubling property, which shows that it provides an example of a doubling measure but is not in $A_{p}$. As a consequence, we obtain anisotropic weighted Poincar\'{e} and Sobolev inequalities, which are used to study the local behavior for solutions to non-homogeneous weighted $p$-Laplace equations.
\end{abstract}

\section{Introduction}%\label{intro}

Nowdays, there has been a surge of interest in studying a class of anisotropic weights comprising of different power-type weights due to their extensive and significant applications in composite materials, fluid mechanics and diffusion problems of the flows etc. To be specific, for $n\geq2$ and $\theta_{i}\in\mathbb{R}$, $i=1,2,3,$ the weight considered in this paper has the form of $|x'|^{\theta_{1}}|x|^{\theta_{2}}|x_{n}|^{\theta_{3}}$. Here and throughout this paper, we denote the $(n-1)$-dimensional variables and domains by adding superscript prime, such as $x'=(x_{1},...,x_{n-1})$ and $B_{1}'(0)$. Note that these three component parts in the weight are of different dimensions if $n\geq3$. Furthermore, different from the fact that $|x|^{\theta_{2}}$ exhibits singular or degenerate behavior only at the origin, $|x'|^{\theta_{1}}$ and $|x_{n}|^{\theta_{3}}$ represent singular or degenerate line and plane, respectively.

When $\theta_{2}\neq0$ and $\theta_{1}=\theta_{3}=0$, $|x|^{\theta_{2}}$ is a well-known isotropic weight due to the famous work \cite{CKN1984}, where Caffarelli, Kohn and Nirenberg \cite{CKN1984} established the following interpolation inequalities
\begin{align}\label{CKN90}
\||x|^{\gamma_{1}}u\|_{L^{s}(\mathbb{R}^{n})}\leq C\||x|^{\gamma_{2}}\nabla u\|^{a}_{L^{p}(\mathbb{R}^{n})}\||x|^{\gamma_{3}}u\|^{1-a}_{L^{q}(\mathbb{R}^{n})},\quad\text{for any }u\in C^{\infty}_{0}(\mathbb{R}^{n}),
\end{align}
and found the necessary and sufficient condition such that it holds under natural assumptions on the parameters. The Caffarelli-Kohn-Nirenberg weight has been widely applied to the study on weighted Sobolev spaces and relevant PDEs. In particular, Dong, Li and Yang \cite{DLY2022,DLY2021} recently revealed the optimal gradient blow-up rate for solution to the linear insulated conductivity problem arising from composite materials by precisely capturing the asymptotic behavior of solution to the weighted elliptic equation as follows:
\begin{align*}
\mathrm{div}(\kappa(x)|x|^{2}\nabla u)=0,\quad\text{in }B_{1},
\end{align*}
where $\tau^{-1}\leq\kappa\leq\tau$ in $B_{1}$ for some $\tau>0$, and $\int_{\mathbb{S}^{n-1}}\kappa x_{i}=0$, $i=1,2,...,n$. Under these assumed conditions, they made use of spherical harmonic expansion to obtain
\begin{align*}
u(x)=u(0)+O(1)|x|^{\alpha},\quad\alpha=2^{-1}\big(-n+\sqrt{n^{2}+4\lambda_{1}}\big),\quad\mathrm{in}\;B_{1/2}.
\end{align*}
Here $\lambda_{1}\leq n-1$ represents the first nonzero eigenvalue for the following eigenvalue problem:
$-\mathrm{div}_{\mathbb{S}^{n-1}}(\kappa(\xi)\nabla_{\mathbb{S}^{n-1}}u(\xi))=\lambda \kappa(\xi) u(\xi)$, $\xi\in\mathbb{S}^{n-1}.$
Especially when $\kappa=1$, we have $\lambda_{1}=n-1$. See Lemma 2.2 in \cite{DLY2021} and Lemmas 2.2 and 5.1 in \cite{DLY2022} for more details. Before that, the local behavior for the solution to weighted elliptic problem has been investigated in the earlier work \cite{FKS1982}. However, the value of H\"{o}lder exponent $\alpha$ captured in \cite{FKS1982} is not explicit. In fluid mechanics, this weight was also utilized to describe the singular behavior of Landau solutions \cite{L1944} with the singularity of order $O(|x|^{-1})$.

When $\theta_{1}\neq0$, $\theta_{2}\in\mathbb{R}$ and $\theta_{3}=0$, the weight $|x'|^{\theta_{1}}|x|^{\theta_{2}}$ is anisotropic. This type of weights has been used in \cite{LY2021} to classify the singularities of $(-1)$-homogeneous axisymmetric no-swirl solutions to the three-dimensional stationary Navier-Stokes equations found in \cite{LLY201801,LLY201802}. Furthermore, Li and Yan \cite{LY2021} studied the asymptotic stability of the singular solutions of order $O(|x|^{-1}\ln|x'|^{-1}|x|)$ to non-stationary Navier-Stokes equations. The key to their proof lies in establishing an extended Hardy-type inequality with anisotropic weights, see Theorem 1.3 in \cite{LY2021}. Stimulated by this problem, Li and Yan \cite{LY2023} recently extended the Caffarelli-Kohn-Nirenberg inequalities in \eqref{CKN90} to the anisotropic case.

When $\theta_{1}=\theta_{2}=0$ and $\theta_{3}\neq0$, the weight $|x_{n}|^{\theta_{3}}$ is closely linked to the establishment for the global regularity for diffusion equations. To be specific, Jin and Xiong \cite{JX2019} established the boundary estimates for the linearized weighted fast diffusion equation such that the implicit function theorem can be utilized to produce a short time regular solution $u$ of fast diffusion equation. Based on the obtained short time regular solution, they further bootstrap its regularity to be smooth provided that the H\"{o}lder continuity of $\partial_{t}u/u$ is proved. This difficulty was overcame in \cite{JX2019} by proving arbitrarily high integrability of $\partial_{t}u/u$. Different from this idea, Jin and Xiong \cite{JX2022} reduced this difficulty to establishing the H\"{o}lder estimates of $u/x_{n}$ in $B_{1/2}^{+}\times(-1/4,0]$, which is achieved by studying the H\"{o}lder regularity for a weighted nonlinear parabolic equation. Their regularity results are optimal, which answers a open problem proposed by Berryman and Holland \cite{BH1980}. Subsequently, Jin, Ros-Oton and Xiong \cite{JRX2023} further extended the results to the case of porous medium equations. With regard to more investigations on the regularity for PDEs with the weight $x_{n}$, we refer to \cite{DPT2023,FP2013,DP2023,DPT202302,JX2023} and the references therein.

In order to better apply the weight $|x'|^{\theta_{1}}|x|^{\theta_{2}}|x_{n}|^{\theta_{3}}$ to geometric and functional inequalities and relevant PDEs, we need first make clear its own properties, especially finding the largest range of $(\theta_{1},\theta_{2},\theta_{3})$ such that it becomes an $A_{p}$-weight. In fact, $A_{p}$-weights play a significant role in fourier analysis and nonlinear potential theory, see e.g. \cite{HKM2006,G2014} for systematic investigations on the Muckenhoupt class $A_{p}$.

The rest of the paper is organized as follows. Section \ref{SEC02} is dedicated to the finding of optimal range of $(\theta_{1},\theta_{2},\theta_{3})$ which makes $|x'|^{\theta_{1}}|x|^{\theta_{2}}|x_{n}|^{\theta_{3}}$ be in the Muckenhoupt class $A_{p}$. Furthermore, its doubling property is proved to be satisfied in a larger range. This implies that the weight $|x'|^{\theta_{1}}|x|^{\theta_{2}}|x_{n}|^{\theta_{3}}$ provides an example of a doubling measure but is not of the Muckenhoupt class $A_{p}$. As a direct application of the theories of $A_{p}$-weights, we obtain the anisotropic weighted Sobolev and Poincar\'{e} inequalities, which are subsequently used to investigate the local behavior for solutions to non-homogeneous $p$-Laplace equations with anisotropic weights in Section \ref{SEC03}. Finally, we further discuss their applications in doubly non-linear parabolic equations and some intriguing problems are naturally raised in Section \ref{SEC05}.

\section{Anisotropic Muckenhoupt weights consisting of three different power-type weights}\label{SEC02}
For $p>1$, define
\begin{align*}
\begin{cases}
\mathcal{A}=\{(\theta_{1},\theta_{2},\theta_{3}): \theta_{1}>-(n-1),\,\theta_{2}\geq0,\,\theta_{3}>-1\},\\
\mathcal{B}=\{(\theta_{1},\theta_{2},\theta_{3}):\theta_{1}>-(n-1),\,\theta_{2}<0,\,\theta_{3}>-1,\,\theta_{1}+\theta_{2}+\theta_{3}>-n\},\\
\mathcal{C}_{p}=\{(\theta_{1},\theta_{2},\theta_{3}):\theta_{1}<(n-1)(p-1),\,\theta_{2}\leq0,\,\theta_{3}<p-1\},\\
\mathcal{D}_{p}=\{(\theta_{1},\theta_{2},\theta_{3}):\theta_{1}<(n-1)(p-1),\,\theta_{2}>0,\,\theta_{3}<p-1,\,\theta_{1}+\theta_{2}+\theta_{3}<n(p-1)\}.
\end{cases}
\end{align*}
Introduce the definition of $A_{p}$-weight as follows.
\begin{definition}
Set $1<p<\infty$. We say that $w$ is an $A_{p}$-weight (or $w$ is of the Muckenhoupt class $A_{p}$), provided that there exists some constant $C=C(n,p,w)>0$ such that
\begin{align*}
\dashint_{B}wdx\left(\dashint_{B}w^{-\frac{1}{p-1}}dx\right)^{p-1}\leq C(n,p,w),\quad\mathrm{with}\;\dashint_{B}=\frac{1}{|B|}\int_{B},
\end{align*}
for any ball $B$ in $\mathbb{R}^{n}$.

\end{definition}

In the following, we use $\omega_{n}$ to represent the volume of unit ball in $\mathbb{R}^{n}$. The notation $a\sim b$ implies that $\frac{1}{C}b\leq a\leq Cb$ for some constant $C=C(n,\theta_{1},\theta_{2},\theta_{3})>0$. The first step is to find the largest range of $(\theta_{1},\theta_{2},\theta_{3})$ ensuring that the weight $|x'|^{\theta_{1}}|x|^{\theta_{2}}|x_{n}|^{\theta_{3}}$ is a Radon measure, which is a necessary condition such that it further becomes an $A_{p}$-weight.
\begin{lemma}\label{MWQAZ090}
$d\mu:=wdx=|x'|^{\theta_{1}}|x|^{\theta_{2}}|x_{n}|^{\theta_{3}}dx$ becomes a Radon measure iff $(\theta_{1},\theta_{2},\theta_{3})\in\mathcal{A}\cup\mathcal{B}$. Furthermore, $\mu(B_{r})\sim r^{n+\theta_{1}+\theta_{2}+\theta_{3}}$ for any $r>0$.

\end{lemma}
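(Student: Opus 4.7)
The plan is to reduce the Radon-measure question to checking finiteness of $\int_{B_1}|x'|^{\theta_1}|x|^{\theta_2}|x_n|^{\theta_3}\,dx$, and then to deduce the asymptotic $\mu(B_r)\sim r^{n+\theta_1+\theta_2+\theta_3}$ as a direct consequence of dilation invariance. First I would use the substitution $x=ry$ to write
\[
\mu(B_r)=r^{n+\theta_1+\theta_2+\theta_3}\int_{B_1}|y'|^{\theta_1}|y|^{\theta_2}|y_n|^{\theta_3}\,dy,
\]
so that the whole statement reduces to identifying when the integral on the right is a positive finite constant depending only on $(n,\theta_1,\theta_2,\theta_3)$. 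Local finiteness of $\mu$ on every bounded set then follows by a standard covering argument from finiteness on $B_1$.

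Next, to analyze the integral on $B_1$, I would pass to spherical-type coordinates that separate the three singular sets. Writing $x=(\rho\sin\phi\,\omega,\rho\cos\phi)$ with $\rho=|x|\in(0,1)$, $\phi\in[0,\pi]$ the colatitude from the $x_n$-axis, and $\omega\in\mathbb{S}^{n-2}$ (the case $n=2$ is analogous and simpler), the Jacobian is $\rho^{n-1}\sin^{n-2}\phi$, and
\[
|x'|=\rho\sin\phi,\qquad |x_n|=\rho|\cos\phi|,
\]
so the integral becomes, up to a positive multiplicative constant,
\[
\int_0^1 \rho^{n-1+\theta_1+\theta_2+\theta_3}\,d\rho\;\cdot\;\int_0^\pi \sin^{n-2+\theta_1}\phi\,|\cos\phi|^{\theta_3}\,d\phi.
\]
The $\rho$-integral converges iff $\theta_1+\theta_2+\theta_3>-n$, the $\phi$-integral at $\phi=0,\pi$ converges iff $\theta_1>-(n-1)$, and at $\phi=\pi/2$ converges iff $\theta_3>-1$. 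When $\theta_2\geq0$ (set $\mathcal{A}$), the origin constraint $\theta_1+\theta_2+\theta_3>-n$ is automatic from $\theta_1>-(n-1)$ and $\theta_3>-1$ together with $\theta_2\geq0$; when $\theta_2<0$ (set $\mathcal{B}$), that constraint must be imposed separately. This matches the definitions of $\mathcal{A}$ and $\mathcal{B}$ precisely.

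For the necessity direction I would show that failure of any one of the three inequalities forces divergence, by localizing to a neighborhood of the corresponding singular set. A small ball about a point of $\{x'=0,\,x_n=\tfrac12\}$ reduces the integrand to a constant multiple of $|x'|^{\theta_1}$ and so tests $\theta_1>-(n-1)$; a small ball about $(\tfrac12 e_1,0)$ tests $\theta_3>-1$; and, in the case $\theta_2<0$, a small ball around the origin together with the spherical-coordinate computation above tests $\theta_1+\theta_2+\theta_3>-n$. Finally, the asymptotic $\mu(B_r)\sim r^{n+\theta_1+\theta_2+\theta_3}$ is immediate from the scaling identity displayed above, with equivalence constants depending only on $(n,\theta_1,\theta_2,\theta_3)$. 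The only mild obstacle I anticipate is bookkeeping when the spherical singular loci overlap, but since the integrand factorizes cleanly in these coordinates everything reduces to one-dimensional beta-type integrals, and no real technical difficulty is expected.
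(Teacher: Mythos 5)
Your proof is correct and takes a genuinely different, more streamlined route than the paper's. The paper proves the lemma by slicing $B_r$ into horizontal layers, expanding $|x|^{\theta_2}\sim(|x'|+|x_n|)^{\theta_2}$ via elementary inequalities, and then decomposing the resulting double integral into three regions ($I_1,I_2,I_3$) that are estimated case by case (including a logarithmic subcase when $\theta_2+\theta_3=-1$); this is careful but computational. You instead exploit the cylindrical symmetry of the weight directly: the exact dilation identity $\mu(B_r)=r^{n+\theta_1+\theta_2+\theta_3}\mu(B_1)$ immediately yields the claimed growth once $\mu(B_1)$ is a finite positive constant, and the colatitude spherical coordinates $x=(\rho\sin\phi\,\omega,\rho\cos\phi)$ factor $\mu(B_1)$ (up to $|\mathbb{S}^{n-2}|$) as a product of two one-dimensional integrals,
\[
\int_0^1\rho^{\,n-1+\theta_1+\theta_2+\theta_3}\,d\rho\ \cdot\ \int_0^\pi \sin^{\,n-2+\theta_1}\phi\,|\cos\phi|^{\theta_3}\,d\phi,
\]
so the three conditions $\theta_1+\theta_2+\theta_3>-n$, $\theta_1>-(n-1)$, $\theta_3>-1$ read off simultaneously, and the observation that the first is automatic when $\theta_2\ge0$ reproduces $\mathcal{A}\cup\mathcal{B}$ exactly. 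The product structure also gives the ``only if'' direction in one stroke (a positive product is finite iff each factor is), which makes your separate localization argument for necessity redundant though harmless. What you buy over the paper's proof is conceptual clarity and the elimination of all case analysis; what the paper's route buys is that it stays entirely within elementary rearrangements of iterated integrals, a style that is reused later in the doubling estimates where balls are not centered at the origin and the scaling trick no longer applies. Your argument, as written, is complete and correct for this lemma.
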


%\begin{remark}
%Our computation method in Lemma \ref{MWQAZ090} can be used to deal with more general anisotropic weights such as $(\sum^{k}_{i=1}x_{i}^{2})^{\frac{\theta_{1}}{2}}|x|^{\theta_{2}}$, for any $1\leq k\leq n-1$.
%
%\end{remark}

\begin{proof}

\noindent{\bf Step 1.} Let $\theta_{2}\geq0$. Then we obtain
\begin{align*}
\mu(B_{r})=&2\int_{B_{r}\cap\{x_{n}>0\}}|x'|^{\theta_{1}}|x|^{\theta_{2}}x_{n}^{\theta_{3}}dx\notag\\
\sim&\int_{B_{r}'}|x'|^{\theta_{1}}dx'\int_{0}^{\sqrt{r^{2}-|x'|^{2}}}(|x'|^{\theta_{2}}+x_{n}^{\theta_{2}})x_{n}^{\theta_{3}}dx_{n}\notag\\
\sim&\int^{r}_{0}\left(s^{n+\theta_{1}+\theta_{2}-2}(r^{2}-s^{2})^{\frac{\theta_{3}+1}{2}}+s^{n+\theta_{1}-2}(r^{2}-s^{2})^{\frac{\theta_{2}+\theta_{3}+1}{2}}\right)ds\notag\\
\sim&\,r^{n+\theta_{1}+\theta_{2}+\theta_{3}}\int^{1}_{0}\left(s^{n+\theta_{1}+\theta_{2}-2}(1-s)^{\frac{\theta_{3}+1}{2}}+s^{n+\theta_{1}-2}(1-s)^{\frac{\theta_{2}+\theta_{3}+1}{2}}\right)ds,
\end{align*}
where in the second line we require $\theta_{3}>-1$ and utilized the following elemental inequalities: for $a,b\geq0$,
\begin{align}\label{INE001}
\begin{cases}
\frac{a^{p}+b^{p}}{2}\leq(a+b)^{p}\leq a^{p}+b^{p},&0\leq p\leq1,\\
a^{p}+b^{p}\leq(a+b)^{p}\leq2^{p-1}(a^{p}+b^{p}),&p>1.
\end{cases}
\end{align}
Combining the properties of Beta function, we see that this integration makes sense iff $n+\theta_{1}+\theta_{2}>1$, $n+\theta_{1}>1$ and $\theta_{3}>-1$. Hence the conclusion holds under the condition of $(\theta_{1},\theta_{2},\theta_{3})\in\mathcal{A}$.

\noindent{\bf Step 2.} Let $\theta_{2}<0$. From \eqref{INE001}, we have
\begin{align}\label{WDZM001}
\mu(B_{r})=&2\int_{B_{r}\cap\{x_{n}>0\}}|x'|^{\theta_{1}}|x|^{\theta_{2}}x_{n}^{\theta_{3}}dx\notag\\
\sim&\int_{B_{r}'}|x'|^{\theta_{1}}dx'\int^{\sqrt{r^{2}-|x'|^{2}}}_{0}\frac{x_{n}^{\theta_{3}}}{|x'|^{-\theta_{2}}+x_{n}^{-\theta_{2}}}dx_{n}.
\end{align}
The last integration term in \eqref{WDZM001} can be decomposed as follows:
\begin{align*}
I_{1}=&\int_{B'_{\frac{1}{\sqrt{2}}r}}|x'|^{\theta_{1}}dx'\int^{|x'|}_{0}\frac{x_{n}^{\theta_{3}}}{|x'|^{-\theta_{2}}+x_{n}^{-\theta_{2}}}dx_{n},\notag\\
I_{2}=&\int_{B'_{\frac{1}{\sqrt{2}}r}}|x'|^{\theta_{1}}dx'\int^{\sqrt{r^{2}-|x'|^{2}}}_{|x'|}\frac{x_{n}^{\theta_{3}}}{|x'|^{-\theta_{2}}+x_{n}^{-\theta_{2}}}dx_{n},\notag\\
I_{3}=&\int_{B_{r}'\setminus B'_{\frac{1}{\sqrt{2}}r}}|x'|^{\theta_{1}}dx'\int^{\sqrt{r^{2}-|x'|^{2}}}_{0}\frac{x_{n}^{\theta_{3}}}{|x'|^{-\theta_{2}}+x_{n}^{-\theta_{2}}}dx_{n}.
\end{align*}
As for the first term $I_{1}$, since $|x'|^{-\theta_{2}}\leq|x'|^{-\theta_{2}}+x_{n}^{-\theta_{2}}\leq2|x'|^{-\theta_{2}}$ if $0\leq x_{n}\leq |x'|$, we have
\begin{align*}
I_{1}\sim&\int_{B'_{\frac{1}{\sqrt{2}}r}}|x'|^{\theta_{1}+\theta_{2}}dx'\int^{|x'|}_{0}x_{n}^{\theta_{3}}dx_{n}\sim\int^{\frac{r}{\sqrt{2}}}_{0}s^{n+\theta_{1}+\theta_{2}+\theta_{3}-1}ds\notag\\
\sim&r^{n+\theta_{1}+\theta_{2}+\theta_{3}}\int^{1}_{0}s^{n+\theta_{1}+\theta_{2}+\theta_{3}-1}ds.
\end{align*}
This integration is valid iff $n+\theta_{1}+\theta_{2}+\theta_{3}>0$ and $\theta_{3}>-1$.

For the second term $I_{2}$, in the following we divide into two subcases to discuss.

{\bf Case 1.} When $\theta_{2}+\theta_{3}=-1$, we have
\begin{align*}
I_{2}\sim&\int_{B'_{\frac{1}{\sqrt{2}}r}}|x'|^{\theta_{1}}dx'\int^{\sqrt{r^{2}-|x'|^{2}}}_{|x'|}x_{n}^{-1}dx_{n}\sim\int^{\frac{r}{\sqrt{2}}}_{0}s^{n+\theta_{1}-2}\ln\frac{\sqrt{r^{2}-s^{2}}}{s}ds\notag\\
\sim&\,r^{n+\theta_{1}-1}\int^{\frac{1}{\sqrt{2}}}_{0}s^{n+\theta_{1}-2}\ln\frac{\sqrt{1-s^{2}}}{s}ds\sim-r^{n+\theta_{1}-1}\int^{\frac{1}{\sqrt{2}}}_{0}s^{n+\theta_{1}-2}\ln s\,ds\notag\\
\sim&-r^{n+\theta_{1}+\theta_{2}+\theta_{3}}\bigg(s^{n+\theta_{1}+\theta_{2}+\theta_{3}}\ln s\Big|^{\frac{1}{\sqrt{2}}}_{0}-\int^{\frac{1}{\sqrt{2}}}_{0}s^{n+\theta_{1}+\theta_{2}+\theta_{3}-1}ds\bigg).
\end{align*}
This is valid iff $n+\theta_{1}+\theta_{2}+\theta_{3}>0$.

{\bf Case 2.} When $\theta_{2}+\theta_{3}\neq-1$, we have
\begin{align*}
I_{2}\sim&\int_{B'_{\frac{1}{\sqrt{2}}r}}|x'|^{\theta_{1}}dx'\int^{\sqrt{r^{2}-|x'|^{2}}}_{|x'|}x_{n}^{\theta_{2}+\theta_{3}}dx_{n}\notag\\
\sim&\int^{\frac{1}{\sqrt{2}}r}_{0}s^{n+\theta_{1}-2}\left((r^{2}-s^{2})^{\frac{\theta_{2}+\theta_{3}+1}{2}}-s^{\theta_{2}+\theta_{3}+1}\right)ds\notag\\
\sim&\,r^{n+\theta_{1}+\theta_{2}+\theta_{3}}\int^{\frac{1}{\sqrt{2}}}_{0}s^{n+\theta_{1}-2}\left((1-s^{2})^{\frac{\theta_{2}+\theta_{3}+1}{2}}-s^{\theta_{2}+\theta_{3}+1}\right)ds.
\end{align*}
Observe that $\min\{1,2^{-\frac{\theta_{2}+\theta_{3}+1}{2}}\}\leq(1-s^{2})^{\frac{\theta_{2}+\theta_{3}+1}{2}}\leq\max\{1,2^{-\frac{\theta_{2}+\theta_{3}+1}{2}}\}$ in $[0,\frac{1}{\sqrt{2}}].$ Therefore, it is valid iff $\theta_{1}>-(n-1)$ and $n+\theta_{1}+\theta_{2}+\theta_{3}>0$.

We proceed to calculate the last term $I_{3}$. Due to the fact that $|x'|\geq\sqrt{r^{2}-|x'|^{2}}\geq x_{n}$ if $\frac{1}{\sqrt{2}}r\leq|x'|\leq r$ and $0\leq x_{n}\leq\sqrt{r^{2}-|x'|^{2}}$, we obtain
\begin{align*}
I_{3}\sim&\int_{B_{r}'\setminus B'_{\frac{1}{\sqrt{2}}r}}|x'|^{\theta_{1}+\theta_{2}}dx'\int^{\sqrt{r^{2}-|x'|^{2}}}_{0}x_{n}^{\theta_{3}}dx_{n}\notag\\
\sim&\int_{\frac{1}{\sqrt{2}}r}^{r}s^{n+\theta_{1}+\theta_{2}-2}(r^{2}-s^{2})^{\frac{\theta_{3}+1}{2}}ds\sim r^{n+\theta_{1}+\theta_{2}+\theta_{3}}\int_{\frac{1}{\sqrt{2}}}^{1}s^{n+\theta_{1}+\theta_{2}-2}(1-s^{2})^{\frac{\theta_{3}+1}{2}}ds\notag\\
\sim&\,r^{n+\theta_{1}+\theta_{2}+\theta_{3}}.
\end{align*}
This integration is valid iff $\theta_{3}>-1$. A combination of these above facts shows that Lemma \ref{MWQAZ090} holds.

\end{proof}

Based on the measure condition revealed in Lemma \ref{MWQAZ090}, we now make clear its range which lets $w=|x'|^{\theta_{1}}|x|^{\theta_{2}}|x_{n}|^{\theta_{3}}$ become an $A_{p}$-weight as follows.
\begin{theorem}\label{THM000Z}
Set $1<p<\infty$. Then $w=|x'|^{\theta_{1}}|x|^{\theta_{2}}|x_{n}|^{\theta_{3}}$ is an $A_{p}$-weight if $(\theta_{1},\theta_{2},\theta_{3})\in(\mathcal{A}\cup\mathcal{B})\cap(\mathcal{C}_{p}\cup\mathcal{D}_{p})$.
\end{theorem}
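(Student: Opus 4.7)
The plan is to verify the defining $A_p$ inequality
\[
\dashint_{B_r(x_0)} w\,dx\,\bigg(\dashint_{B_r(x_0)} w^{-\frac{1}{p-1}}\,dx\bigg)^{p-1}\le C
\]
uniformly over all balls $B_r(x_0)\subset\bR^n$, with $C=C(n,p,\theta_1,\theta_2,\theta_3)$. The first step is a duality observation: the dual weight has the form $w^{-1/(p-1)}=|x'|^{\tilde\theta_1}|x|^{\tilde\theta_2}|x_n|^{\tilde\theta_3}$ with $\tilde\theta_i=-\theta_i/(p-1)$, and a direct verification shows that $(\theta_1,\theta_2,\theta_3)\in\cC_p\cup\cD_p$ is equivalent to $(\tilde\theta_1,\tilde\theta_2,\tilde\theta_3)\in\cA\cup\cB$. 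Hence Lemma \ref{MWQAZ090} applies to \emph{both} $w$ and $w^{-1/(p-1)}$, and on origin-centered balls one has
\[
\mu(B_r(0))\sim r^{n+\theta_1+\theta_2+\theta_3},\qquad \mu^*(B_r(0))\sim r^{n+\tilde\theta_1+\tilde\theta_2+\tilde\theta_3},
\]
where $\mu^*=w^{-1/(p-1)}\,dx$.

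The proof would then proceed by a case distinction on the position of $x_0$. In the regime $|x_0|\le 4r$, the inclusion $B_r(x_0)\subset B_{5r}(0)$, together with the previous estimates, yields the upper bounds $\mu(B_r(x_0))\lesssim r^{n+\theta_1+\theta_2+\theta_3}$ and $\mu^*(B_r(x_0))\lesssim r^{n+\tilde\theta_1+\tilde\theta_2+\tilde\theta_3}$; since the identity $(p-1)(\tilde\theta_1+\tilde\theta_2+\tilde\theta_3)=-(\theta_1+\theta_2+\theta_3)$ makes all $r$-powers cancel in the $A_p$ product, this case is settled without any need for lower bounds on the measures. In the complementary regime $|x_0|>4r$ one has $|x|\sim|x_0|$ throughout $B_r(x_0)$, so $|x|^{\theta_2}$ and $|x|^{\tilde\theta_2}$ are essentially constants that pull out of the two averages and leave the reduced weight $v=|x'|^{\theta_1}|x_n|^{\theta_3}$. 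A further subdivision according to whether $|x_0'|$ and $|x_{0,n}|$ are large or small compared to $r$ then reduces the remaining estimate to either a trivial one (both large), the $A_p$ property of $|x'|^{\theta_1}$ on $\bR^{n-1}$ (when $|x_0'|\lesssim r$ and $|x_{0,n}|\gtrsim r$), or the one-dimensional $A_p$ property of $|x_n|^{\theta_3}$ (when $|x_{0,n}|\lesssim r$ and $|x_0'|\gtrsim r$). These lower-dimensional $A_p$ conditions read $-(n-1)<\theta_1<(n-1)(p-1)$ and $-1<\theta_3<p-1$, both of which are embedded in the combined hypothesis $(\cA\cup\cB)\cap(\cC_p\cup\cD_p)$.

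The main technical obstacle is the first regime, because all three singular loci meet $B_r(x_0)$ simultaneously and the sign of $\theta_2$ (respectively $\tilde\theta_2$) determines which of $\cA,\cB$ (respectively $\cC_p,\cD_p$) governs the integrability near the origin; this is precisely what the summability conditions $\theta_1+\theta_2+\theta_3>-n$ in $\cB$ and $\theta_1+\theta_2+\theta_3<n(p-1)$ in $\cD_p$ are designed to accommodate. Once Lemma \ref{MWQAZ090} has been verified in the two separate sub-regimes $\theta_2\ge 0$ and $\theta_2<0$, however, both feed into the same scaling identity and the power-cancellation argument closes the estimate uniformly. The secondary, essentially bookkeeping, issue in the far regime is to quantify the multiplicative constants absorbed when $|x|$ is replaced by $|x_0|$ and when the ball $B_r(x_0)$ is sliced into its $(n-1)$- or $1$-dimensional projection; both are $O(1)$ and do not affect the final bound.
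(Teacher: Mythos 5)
Your proposal is correct and takes essentially the same route as the paper's proof. Both arguments rely on Lemma \ref{MWQAZ090} applied to $w$ and to $w^{-1/(p-1)}$ (using the duality $\tilde\theta_i=-\theta_i/(p-1)$, under which $\cC_p\cup\cD_p$ is carried to $\cA\cup\cB$) for balls near the origin, with the $r$-powers cancelling via $(p-1)\sum_i\tilde\theta_i=-\sum_i\theta_i$, and on pulling out essentially constant factors with a subdivision on the sizes of $|\bar x_n|$, $|\bar x|$, $|\bar x'|$ relative to $R$ for balls far from the singular loci; your case hierarchy (first on $|x_0|$, then on $|x_0'|$ and $|x_{0,n}|$) is a reorganization of the paper's (first on $|\bar x_n|$, then on $|\bar x|$, then on $|\bar x'|$), not a genuinely different argument.
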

\begin{remark}
In fact, the range of $(\theta_{1},\theta_{2},\theta_{3})\in(\mathcal{A}\cup\mathcal{B})\cap(\mathcal{C}_{p}\cup\mathcal{D}_{p})$ captured in Theorem \ref{THM000Z} is optimal. This is due to the fact that the uniform lower bound of $\dashint_{B_{R}(\bar{x})}wdx\big(\dashint_{B_{R}(\bar{x})}w^{-\frac{1}{p-1}}dx\big)^{p-1}\geq C(n,p,\theta_{1},\theta_{2},\theta_{3})$ for any $B_{R}(\bar{x})\subset\mathbb{R}^{n}$ can be established by following the proof of Theorem \ref{Lem009} below with a slight modification.
\end{remark}

\begin{remark}
In the case when $\theta_{1}=\theta_{3}=0$, see pages 505-506 in \cite{G2014} for the corresponding results. With regard to the case of $\theta_{3}=0$, see Theorems 2.3 and 2.6 in \cite{MZ2023}.
\end{remark}

\begin{proof}
For any $\bar{x}=(\bar{x}',\bar{x}_{n})\in\mathbb{R}^{n}$ and $R>0$, we prepare to complete the proof by dividing into the following three cases.

\noindent{\bf Case 1.} Let $|\bar{x}_{n}|\geq3R$. Observe first that
\begin{align}\label{ZQ00}
&\int_{B_{R}(\bar{x})}|x'|^{\theta_{1}}|x|^{\theta_{2}}|x_{n}|^{\theta_{3}}dx\notag\\
&\leq (|\bar{x}|+R\mathrm{sgn}(\theta_{2}))^{\theta_{2}}(|\bar{x}_{n}|+R\mathrm{sgn}(\theta_{3}))^{\theta_{3}}\int_{B_{R}(\bar{x})}|x'|^{\theta_{1}}dx\notag\\
&\leq C(n,\theta_{2},\theta_{3})|\bar{x}|^{\theta_{2}}|\bar{x}_{n}|^{\theta_{3}}R\int_{B_{R}'(\bar{x}')}|x'|^{\theta_{1}}dx',
\end{align}
where $\mathrm{sgn}$ is the sign function as follows:
\begin{align}\label{SGN0}
\mathrm{sgn}(\theta)=&
\begin{cases}
1,&\text{if }\theta>0,\\
0,&\text{if }\theta=0,\\
-1,&\text{if }\theta<0.
\end{cases}
\end{align}
Note that if $|\bar{x}'|\geq3R$, then
\begin{align}\label{ZQ091}
\int_{B'_{R}(\bar{x}')}|x'|^{\theta_{1}}dx'\leq&\omega_{n-1}R^{n-1}
\begin{cases}
\big(|\bar{x}'|+R\big)^{\theta_{1}},&\text{if }\theta_{1}\geq0,\\
\big(|\bar{x}'|-R\big)^{\theta_{1}},&\text{if }\theta_{1}<0
\end{cases}\notag\\
\leq& C(n,\theta_{1}) R^{n-1}|\bar{x}'|^{\theta_{1}},
\end{align}
while, if $|\bar{x}'|<3R$,
\begin{align}\label{ZQ092}
\int_{B'_{R}(\bar{x}')}|x'|^{\theta_{1}}dx'\leq&\int_{B'_{4R}(0')}|x'|^{\theta_{1}}dx'=\frac{(n-1)\omega_{n-1}}{n-1+\theta_{1}}(4R)^{n-1+\theta_{1}}.
\end{align}
Inserting \eqref{ZQ091}--\eqref{ZQ092} into \eqref{ZQ00}, we obtain that if $(\theta_{1},\theta_{2},\theta_{3})\in\mathcal{A}\cup\mathcal{B}$,
\begin{align*}
\dashint_{B_{R}(\bar{x})}|x'|^{\theta_{1}}|x|^{\theta_{2}}|x_{n}|^{\theta_{3}}dx\leq C(n,\theta_{1},\theta_{2},\theta_{3})|\bar{x}|^{\theta_{2}}|\bar{x}_{n}|^{\theta_{3}}
\begin{cases}
|\bar{x}'|^{\theta_{1}},&\text{for }|\bar{x}'|\geq3R,\\
R^{\theta_{1}},&\text{for }|\bar{x}'|<3R.
\end{cases}
\end{align*}
In exactly the same way, we deduce that if $(\theta_{1},\theta_{2},\theta_{3})\in\mathcal{C}_{p}\cup\mathcal{D}_{p}$,
\begin{align*}
&\dashint_{B_{R}(\bar{x})}|x'|^{-\frac{\theta_{1}}{p-1}}|x|^{-\frac{\theta_{2}}{p-1}}|x_{n}|^{-\frac{\theta_{3}}{p-1}}dx\notag\\
&\leq C(n,p,\theta_{1},\theta_{2},\theta_{3})|\bar{x}|^{-\frac{\theta_{2}}{p-1}}|\bar{x}_{n}|^{-\frac{\theta_{3}}{p-1}}
\begin{cases}
|\bar{x}'|^{-\frac{\theta_{1}}{p-1}},&\text{for }|\bar{x}'|\geq3R,\\
R^{-\frac{\theta_{1}}{p-1}},&\text{for }|\bar{x}'|<3R.
\end{cases}
\end{align*}

\noindent{\bf Case 2.} Let $|\bar{x}_{n}|<3R$ and $|\bar{x}|\geq3R$. Then we have
\begin{align}\label{ZW865}
&\int_{B_{R}(\bar{x})}|x'|^{\theta_{1}}|x|^{\theta_{2}}|x_{n}|^{\theta_{3}}dx\notag\\
&\leq (|\bar{x}|+R\mathrm{sgn}(\theta_{2}))^{\theta_{2}}\int_{B_{R}(\bar{x})}|x'|^{\theta_{1}}|x_{n}|^{\theta_{3}}dx\notag\\
&\leq C(n,\theta_{2})|\bar{x}|^{\theta_{2}}\int_{B_{R}'(\bar{x}')}|x'|^{\theta_{1}}dx'\int^{\bar{x}_{n}+\sqrt{R^{2}-|x'-\bar{x}'|^{2}}}_{\bar{x}_{n}-\sqrt{R^{2}-|x'-\bar{x}'|^{2}}}|x_{n}|^{\theta_{3}}dx_{n}.
\end{align}
On one hand, if $|\bar{x}_{n}|\leq\sqrt{R^{2}-|x'-\bar{x}'|^{2}}$, then
\begin{align}\label{ZQ095}
&\int^{\bar{x}_{n}+\sqrt{R^{2}-|x'-\bar{x}'|^{2}}}_{\bar{x}_{n}-\sqrt{R^{2}-|x'-\bar{x}'|^{2}}}|x_{n}|^{\theta_{3}}dx_{n}\notag\\
&=\frac{\big(\bar{x}_{n}+\sqrt{R^{2}-|x'-\bar{x}'|^{2}}\big)^{\theta_{3}+1}+\big(\sqrt{R^{2}-|x'-\bar{x}'|^{2}}-\bar{x}_{n}\big)^{\theta_{3}+1}}{\theta_{3}+1}\notag\\
&\leq\frac{2^{\theta_{3}+2}\big(R^{2}-|x'-\bar{x}'|^{2}\big)^{\frac{\theta_{3}+1}{2}}}{\theta_{3}+1}\leq\frac{2^{\theta_{3}+2}R^{\theta_{3}+1}}{\theta_{3}+1}.
\end{align}
On the other hand, if $|\bar{x}_{n}|>\sqrt{R^{2}-|x'-\bar{x}'|^{2}}$, in light of $|\bar{x}_{n}|<3R$, we have
\begin{align}\label{ZQ096}
&\int^{\bar{x}_{n}+\sqrt{R^{2}-|x'-\bar{x}'|^{2}}}_{\bar{x}_{n}-\sqrt{R^{2}-|x'-\bar{x}'|^{2}}}|x_{n}|^{\theta_{3}}dx_{n}\notag\\
&=\frac{\big(|\bar{x}_{n}|+\sqrt{R^{2}-|x'-\bar{x}'|^{2}}\big)^{\theta_{3}+1}-\big(|\bar{x}_{n}|-\sqrt{R^{2}-|x'-\bar{x}'|^{2}}\big)^{\theta_{3}+1}}{\theta_{3}+1}\notag\\
&\leq\frac{(4R)^{\theta_{3}+1}}{\theta_{3}+1}.
\end{align}
Substituting \eqref{ZQ091}--\eqref{ZQ092} and \eqref{ZQ095}--\eqref{ZQ096} into \eqref{ZW865}, it follows that if $(\theta_{1},\theta_{2},\theta_{3})\in\mathcal{A}\cup\mathcal{B}$,
\begin{align}\label{DA9032}
\dashint_{B_{R}(\bar{x})}|x'|^{\theta_{1}}|x|^{\theta_{2}}|x_{n}|^{\theta_{3}}dx\leq C(n,\theta_{1},\theta_{2},\theta_{3})|\bar{x}|^{\theta_{2}}R^{\theta_{3}}
\begin{cases}
|\bar{x}'|^{\theta_{1}},&\text{for }|\bar{x}'|\geq3R,\\
R^{\theta_{1}},&\text{for }|\bar{x}'|<3R.
\end{cases}
\end{align}
By the same arguments, if $(\theta_{1},\theta_{2},\theta_{3})\in\mathcal{C}_{p}\cup\mathcal{D}_{p}$, then we obtain
\begin{align*}
&\dashint_{B_{R}(\bar{x})}|x'|^{-\frac{\theta_{1}}{p-1}}|x|^{-\frac{\theta_{2}}{p-1}}|x_{n}|^{-\frac{\theta_{3}}{p-1}}dx\notag\\
&\leq C(n,p,\theta_{1},\theta_{2},\theta_{3})|\bar{x}|^{-\frac{\theta_{2}}{p-1}}R^{-\frac{\theta_{3}}{p-1}}
\begin{cases}
|\bar{x}'|^{-\frac{\theta_{1}}{p-1}},&\text{for }|\bar{x}'|\geq3R,\\
R^{-\frac{\theta_{1}}{p-1}},&\text{for }|\bar{x}'|<3R.
\end{cases}
\end{align*}

\noindent{\bf Case 3.} Let $|\bar{x}_{n}|<3R$ and $|\bar{x}|<3R$. It then follows from Lemma \ref{MWQAZ090} that if $(\theta_{1},\theta_{2},\theta_{3})\in\mathcal{A}\cup\mathcal{B}$,
\begin{align*}
\dashint_{B_{R}(\bar{x})}|x'|^{\theta_{1}}|x|^{\theta_{2}}|x_{n}|^{\theta_{3}}dx\leq 4^{n}\dashint_{B_{4R}(0)}|x'|^{\theta_{1}}|x|^{\theta_{2}}|x_{n}|^{\theta_{3}}dx\leq C(n,\theta_{1},\theta_{2},\theta_{3})R^{\theta_{1}+\theta_{2}+\theta_{3}},
\end{align*}
and, if $(\theta_{1},\theta_{2},\theta_{3})\in\mathcal{C}_{p}\cup\mathcal{D}_{p}$,
\begin{align*}
&\dashint_{B_{R}(\bar{x})}|x'|^{-\frac{\theta_{1}}{p-1}}|x|^{-\frac{\theta_{2}}{p-1}}|x_{n}|^{-\frac{\theta_{3}}{p-1}}dx\notag\\
&\leq 4^{n}\dashint_{B_{4R}(0)}|x'|^{-\frac{\theta_{1}}{p-1}}|x|^{-\frac{\theta_{2}}{p-1}}|x_{n}|^{-\frac{\theta_{3}}{p-1}}dx\leq C(n,p,\theta_{1},\theta_{2},\theta_{3})R^{-\frac{\theta_{1}+\theta_{2}+\theta_{3}}{p-1}}.
\end{align*}
The proof is complete.

\end{proof}

In order to understand the Radon measure $d\mu=wdx$ more clearly, we proceed to investigate its doubling property. From Chapters 1 and 15 of \cite{HKM2006}, we see that the Radon measure $d\mu=wdx$ formed by the $A_{p}$-weight $|x'|^{\theta}|x|^{\theta_{2}}|x_{n}|^{\theta_{3}}$ is doubling if $(\theta_{1},\theta_{2},\theta_{3})\in(\mathcal{A}\cup\mathcal{B})\cap(\mathcal{C}_{p}\cup\mathcal{D}_{p})$. In the following we prove that this range is just a sufficient condition but not the necessary condition. The definition of doubling is now stated as follows.
\begin{definition}
A Radon measure $d\mu$ is called doubling provided that there is some constant $0<C<\infty$ such that $\mu(B_{2R}(\bar{x}))\leq C\mu(B_{R}(\bar{x}))$ for any $\bar{x}\in\mathbb{R}^{n}$ and $R>0$.
\end{definition}
\begin{theorem}\label{Lem009}
The Radon measure $d\mu=wdx$ is doubling when $(\theta_{1},\theta_{2},\theta_{3})\in\mathcal{A}\cup\mathcal{B}$.
\end{theorem}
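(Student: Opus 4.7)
My plan is to establish the sharp two-sided estimate
\begin{equation*}
\mu(B_{R}(\bar{x}))\sim R^{n}(|\bar{x}'|+R)^{\theta_{1}}(|\bar{x}|+R)^{\theta_{2}}(|\bar{x}_{n}|+R)^{\theta_{3}}=:V(\bar{x},R),
\end{equation*}
with implicit constants depending only on $n,\theta_{1},\theta_{2},\theta_{3}$. Once this is in hand, the doubling property follows at once, since trivially $V(\bar{x},2R)\leq 2^{n+|\theta_{1}|+|\theta_{2}|+|\theta_{3}|}V(\bar{x},R)$.

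Using the invariance of $\mu$ under rotations in the $x'$-plane and under the reflection $x_{n}\mapsto -x_{n}$, I first reduce to $\bar{x}=(a,0,\ldots,0,b)$ with $a=|\bar{x}'|\geq 0$ and $b=\bar{x}_{n}\geq 0$. The upper bound $\mu(B_{R}(\bar{x}))\leq CV(\bar{x},R)$ can then be read off from the case analysis of the proof of Theorem \ref{THM000Z}: multiplying the bounds obtained there on $\dashint_{B_{R}(\bar{x})}w\,dx$ by $|B_{R}|\sim R^{n}$ produces, in each of the three cases $|\bar{x}_{n}|\geq 3R$; $|\bar{x}_{n}|<3R\leq|\bar{x}|$; $|\bar{x}_{n}|<3R$ and $|\bar{x}|<3R$, an expression that agrees with $V(\bar{x},R)$ up to a constant, because each factor of the form ``$s^{\theta_{i}}$ if $s\geq 3R$, else $R^{\theta_{i}}$'' is comparable to $(s+R)^{\theta_{i}}$.

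For the matching lower bound --- the main new input --- I exhibit a single well-positioned sub-ball on which the weight is uniformly bounded below. Put $z:=\bar{x}+\tfrac{R}{2}\xi$ with $\xi:=(e_{1}+e_{n})/\sqrt{2}$; then $B_{R/100}(z)\subset B_{R}(\bar{x})$, and a direct check, using only $a,b\geq 0$, shows that for every $x\in B_{R/100}(z)$,
\begin{equation*}
|x'|\sim a+R,\qquad |x|\sim|\bar{x}|+R,\qquad |x_{n}|\sim b+R,
\end{equation*}
with constants depending only on $n$. Integrating the weight over $B_{R/100}(z)$ (whose volume is $\sim R^{n}$) then yields
\begin{equation*}
\mu(B_{R}(\bar{x}))\geq \mu(B_{R/100}(z))\gtrsim V(\bar{x},R),
\end{equation*}
as required. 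The main obstacle is precisely this simultaneous control: the tilted choice $(e_{1}+e_{n})/\sqrt{2}$ is made so that the offsets $R/(2\sqrt{2})$ in both the $x_{1}$- and $x_{n}$-coordinates push the sub-ball uniformly away from each of the three singular loci $\{x'=0\}$, $\{x_{n}=0\}$, and $\{0\}$ at once, while still keeping it inside $B_{R}(\bar{x})$. In the $\mathcal{B}$-regime $\theta_{2}<0$, the finiteness of $V(\bar{x},R)$ is automatic from the condition $\theta_{1}+\theta_{2}+\theta_{3}>-n$ supplied by Lemma \ref{MWQAZ090}, so no additional integrability concern arises.
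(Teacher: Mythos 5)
Your proof is correct. The overall skeleton---an upper bound for $\mu(B_{2R}(\bar x))$ read off from the case analysis in the proof of Theorem \ref{THM000Z}, matched against a lower bound for $\mu(B_{R}(\bar x))$ of the same order---is the same as the paper's, and your identification of those upper bounds with $CV(\bar x,R)$ is accurate (in each positional case the factors $|\bar x'|^{\theta_1}$ or $R^{\theta_1}$, $|\bar x|^{\theta_2}$ or $R^{\theta_2}$, $|\bar x_n|^{\theta_3}$ or $R^{\theta_3}$ are comparable to $(|\bar x'|+R)^{\theta_1}$, $(|\bar x|+R)^{\theta_2}$, $(|\bar x_n|+R)^{\theta_3}$). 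Where you genuinely diverge is in the lower bound, which is the main content: the paper proves it by splitting into the three positional regimes and, near the singular set, into five further subcases according to the signs of $\theta_1,\theta_2,\theta_3$, using pointwise comparisons such as $|x'|^{\theta_1}|x|^{\theta_2}\geq|x|^{\theta_1+\theta_2}$ for $\theta_1<0$ together with the radial monotonicity estimates \eqref{DQ806}--\eqref{DQ809} and \eqref{DQ812}--\eqref{DQ815}; you instead use a single tilted sub-ball $B_{R/100}\bigl(\bar x+\tfrac{R}{2}(e_1+e_n)/\sqrt2\bigr)$ on which $|x'|$, $|x|$, $|x_n|$ are simultaneously comparable to $|\bar x'|+R$, $|\bar x|+R$, $|\bar x_n|+R$, and your verification of these comparabilities (after the harmless rotation/reflection reduction) is sound and works uniformly in the signs of the exponents. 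Your route buys the sharp unified formula $\mu(B_R(\bar x))\sim V(\bar x,R)$, which is slightly stronger than doubling and eliminates the sign case analysis; the paper's route is more computational but self-contained in each regime and yields the same two-sided information in case-dependent form. Two minor remarks: the hypothesis $(\theta_1,\theta_2,\theta_3)\in\mathcal{A}\cup\mathcal{B}$ enters only through the upper bound (via Lemma \ref{MWQAZ090} and the integrals in \eqref{ZQ092} and \eqref{ZQ095}--\eqref{ZQ096}), not through your sub-ball lower bound, so your closing comment about finiteness of $V$ in the $\mathcal{B}$-regime is beside the point ($V$ is always finite; what requires the hypothesis is $\mu(B_R(\bar x))<\infty$); and the comparability constants in the sub-ball step are in fact absolute rather than dimension-dependent---neither observation affects correctness.
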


\begin{remark}
The range of $(\theta_{1},\theta_{2},\theta_{3})\in\mathcal{A}\cup\mathcal{B}$ is the necessary and sufficient condition to make $d\mu=wdx$ satisfy the doubling property, since this range is the largest to ensure that $d\mu$ is a Radon measure.
\end{remark}

\begin{proof}
For any given $\bar{x}=(\bar{x}',\bar{x}_{n})\in\mathbb{R}^{n}$ and $R>0$, we divide into three subparts to complete the proof in the following.

\noindent{\bf Step 1.}
Let $|\bar{x}_{n}|\geq 3R$. On one hand, similar to the proof of Case 1 in Theorem \ref{THM000Z}, we have
\begin{align}\label{M098}
&\int_{B_{2R}(\bar{x})}|x'|^{\theta_{1}}|x|^{\theta_{2}}|x_{n}|^{\theta_{3}}dx\notag\\
&\leq C(n,\theta_{1},\theta_{2},\theta_{3})|\bar{x}|^{\theta_{2}}|\bar{x}_{n}|^{\theta_{3}}R^{n}
\begin{cases}
|\bar{x}'|^{\theta_{1}},&\text{for }|\bar{x}'|\geq3R,\\
R^{\theta_{1}},&\text{for }|\bar{x}'|<3R.
\end{cases}
\end{align}

On the other hand,
\begin{align}\label{DA90}
&\int_{B_{R}(\bar{x})}|x'|^{\theta_{1}}|x|^{\theta_{2}}|x_{n}|^{\theta_{3}}dx\notag\\
&\geq (|\bar{x}|-R\mathrm{sgn}(\theta_{2}))^{\theta_{2}}(|\bar{x}_{n}|-R\mathrm{sgn}(\theta_{3}))^{\theta_{3}}\int_{B_{R}(\bar{x})}|x'|^{\theta_{1}}dx\notag\\
&\geq C(n,\theta_{2},\theta_{3})|\bar{x}|^{\theta_{2}}|\bar{x}_{n}|^{\theta_{3}}R\int_{B_{R/2}'(\bar{x}')}|x'|^{\theta_{1}}dx',
\end{align}
where the notation $\mathrm{sgn}$ is defined by \eqref{SGN0} and in the last inequality we utilized the fact that
\begin{align*}
\int_{B_{R}(\bar{x})}|x'|^{\theta_{1}}dx=&2\int_{B_{R}'(\bar{x}')}|x'|^{\theta_{1}}\sqrt{R^{2}-|x'-\bar{x}'|^{2}}dx'\geq\sqrt{3}R\int_{B'_{R/2}(\bar{x}')}|x'|^{\theta_{1}}dx'.
\end{align*}
Since $|x'|^{\theta_{1}}$ increases radially for $\theta_{1}\geq0$ and it decreases radially if $\theta_{1}<0$, it then follows that if $|\bar{x}'|\geq\frac{3}{2}R$,
\begin{align}\label{DQ806}
\int_{B'_{R/2}(\bar{x}')}|x'|^{\theta_{1}}dx'\geq&\omega_{n-1}\Big(\frac{R}{2}\Big)^{n-1}
\big(|\bar{x}'|-\mathrm{sgn}(\theta_{1})R/2\big)^{\theta_{1}}\notag\\
\geq& C(n,\theta_{1})R^{n-1}|\bar{x}'|^{\theta_{1}},
\end{align}
while, if $|\bar{x}'|<\frac{3}{2}R,$
\begin{align}\label{DQ809}
\int_{B'_{R/2}(\bar{x}')}|x'|^{\theta_{1}}dx'\geq&
\begin{cases}
\int_{B'_{R/2}(0')}|x'|^{\theta_{1}}dx',&\text{if }\theta_{1}\geq0,\vspace{0.3ex}\\
\int_{B'_{R/2}(\frac{3}{2}R\frac{\bar{x}'}{|\bar{x}'|})}|x'|^{\theta_{1}}dx',&\text{if }\theta_{1}<0
\end{cases}\notag\\
\geq&\omega_{n-1}R^{n-1+\theta_{1}}
\begin{cases}
\frac{n-1}{2^{n-1+\theta_{1}}(n-1+\theta_{1})},&\text{if }\theta_{1}\geq0,\vspace{0.3ex}\\
\frac{1}{2^{n-1}},&\text{if }\theta_{1}<0.
\end{cases}
\end{align}
Substituting these two relations into \eqref{DA90}, we derive
\begin{align*}
&\int_{B_{R}(\bar{x})}|x'|^{\theta_{1}}|x|^{\theta_{2}}|x_{n}|^{\theta_{3}}dx\geq C(n,\theta_{1},\theta_{2},\theta_{3})|\bar{x}|^{\theta_{2}}|\bar{x}_{n}|^{\theta_{3}}R^{n}
\begin{cases}
|\bar{x}'|^{\theta_{1}},&\text{if }|\bar{x}'|\geq\frac{3}{2}R,\\
R^{\theta_{1}},&\text{if }|\bar{x}'|<\frac{3}{2}R,
\end{cases}
\end{align*}
which, together with \eqref{M098}, reads that Theorem \ref{Lem009} holds in the case of $|\bar{x}_{n}|\geq 3R$.

\noindent{\bf Step 2.}
Let $|\bar{x}_{n}|<3R$ and $|\bar{x}|\geq3R$. To begin with, we have
\begin{align}\label{WM810}
&\int_{B_{R}(\bar{x})}|x'|^{\theta_{1}}|x|^{\theta_{2}}|x_{n}|^{\theta_{3}}dx\notag\\
&\geq (|\bar{x}|-R\mathrm{sgn}(\theta_{2}))^{\theta_{2}}\int_{B_{R}(\bar{x})}|x'|^{\theta_{1}}|x_{n}|^{\theta_{3}}dx\notag\\
&\geq C(n,\theta_{2})|\bar{x}|^{\theta_{2}}\int_{B_{R/2}'(\bar{x}')}|x'|^{\theta_{1}}dx'\int^{\bar{x}_{n}+\sqrt{R^{2}-|x'-\bar{x}'|^{2}}}_{\bar{x}_{n}-\sqrt{R^{2}-|x'-\bar{x}'|^{2}}}|x_{n}|^{\theta_{3}}dx_{n}.
\end{align}
Utilizing \eqref{INE001}, we deduce that for $x'\in B'_{R/2}(\bar{x}'),$

$(i)$ if $|\bar{x}_{n}|\leq\sqrt{R^{2}-|x'-\bar{x}'|^{2}}$,
\begin{align}\label{DQ812}
&\int^{\bar{x}_{n}+\sqrt{R^{2}-|x'-\bar{x}'|^{2}}}_{\bar{x}_{n}-\sqrt{R^{2}-|x'-\bar{x}'|^{2}}}|x_{n}|^{\theta_{3}}dx_{n}\notag\\
&=\frac{\big(\bar{x}_{n}+\sqrt{R^{2}-|x'-\bar{x}'|^{2}}\big)^{\theta_{3}+1}+\big(\sqrt{R^{2}-|x'-\bar{x}'|^{2}}-\bar{x}_{n}\big)^{\theta_{3}+1}}{\theta_{3}+1}\notag\\
&\geq\frac{2^{\theta_{3}+1}\min\{1,2^{-\theta_{3}}\}}{\theta_{3}+1}(R^{2}-|x'-\bar{x}'|^{2})^{\frac{\theta_{3}+1}{2}}\geq\frac{3^{\frac{\theta_{3}+1}{2}}\min\{1,2^{-\theta_{3}}\}R^{\theta_{3}+1}}{\theta_{3}+1};
\end{align}

$(ii)$ if $|\bar{x}_{n}|>\sqrt{R^{2}-|x'-\bar{x}'|^{2}}$,
\begin{align}\label{DQ815}
&\int^{\bar{x}_{n}+\sqrt{R^{2}-|x'-\bar{x}'|^{2}}}_{\bar{x}_{n}-\sqrt{R^{2}-|x'-\bar{x}'|^{2}}}|x_{n}|^{\theta_{3}}dx_{n}\notag\\
&=\frac{\big(|\bar{x}_{n}|+\sqrt{R^{2}-|x'-\bar{x}'|^{2}}\big)^{\theta_{3}+1}-\big(|\bar{x}_{n}|-\sqrt{R^{2}-|x'-\bar{x}'|^{2}}\big)^{\theta_{3}+1}}{\theta_{3}+1}\notag\\
&\geq(R^{2}-|x'-\bar{x}'|^{2})^{\frac{\theta_{3}+1}{2}}
\begin{cases}
\frac{2^{\theta_{3}+1}}{\theta_{3}+1},&\text{for }\theta_{3}\geq0,\\
2((1+\sqrt{3})^{\theta_{3}+1}-3^{\frac{\theta_{3}+1}{2}}),&\text{for }-1<\theta_{3}<0
\end{cases}\notag\\
&\geq R^{\theta_{3}+1}
\begin{cases}
\frac{3^{\frac{\theta_{3}+1}{2}}}{\theta_{3}+1},&\text{for }\theta_{3}\geq0,\\
2((3+\sqrt{3})^{\theta_{3}+1}-3^{\theta_{3}+1}),&\text{for }-1<\theta_{3}<0,
\end{cases}
\end{align}
where we also utilized the following inequality with $t=\frac{|\bar{x}_{n}|-\sqrt{R^{2}-|x'-\bar{x}'|^{2}}}{2\sqrt{R^{2}-|x'-\bar{x}'|^{2}}}$: for $-1<\theta_{3}<0$,
\begin{align*}
(1+t)^{\theta_{3}+1}-t^{\theta_{3}+1}\geq(1+\sqrt{3})^{\theta_{3}+1}-3^{\frac{\theta_{3}+1}{2}},\quad\text{for }0<t\leq\sqrt{3}.
\end{align*}
Therefore, inserting \eqref{DQ806}--\eqref{DQ809} and \eqref{DQ812}--\eqref{DQ815} into \eqref{WM810}, we deduce
\begin{align*}
&\int_{B_{R}(\bar{x})}|x'|^{\theta_{1}}|x|^{\theta_{2}}|x_{n}|^{\theta_{3}}dx\geq C(n,\theta_{1},\theta_{2},\theta_{3})|\bar{x}|^{\theta_{2}}R^{n+\theta_{3}}
\begin{cases}
|\bar{x}'|^{\theta_{1}},&\text{if }|\bar{x}'|\geq\frac{3}{2}R,\\
R^{\theta_{1}},&\text{if }|\bar{x}'|<\frac{3}{2}R.
\end{cases}
\end{align*}
Applying \eqref{DA9032} with $R$ replaced by $2R$, we have
\begin{align*}
\int_{B_{2R}(\bar{x})}|x'|^{\theta_{1}}|x|^{\theta_{2}}|x_{n}|^{\theta_{3}}dx\leq C(n,\theta_{1},\theta_{2},\theta_{3})|\bar{x}|^{\theta_{2}}R^{n+\theta_{3}}
\begin{cases}
|\bar{x}'|^{\theta_{1}},&\text{for }|\bar{x}'|\geq3R,\\
R^{\theta_{1}},&\text{for }|\bar{x}'|<3R.
\end{cases}
\end{align*}
Combining these two above relations, we obtain that Theorem \ref{Lem009} holds if $|\bar{x}_{n}|<3R$ and $|\bar{x}|\geq3R$.

\noindent{\bf Step 3.} Let $|\bar{x}_{n}|<3R$ and $|\bar{x}|<3R$. First, observe from Lemma \ref{MWQAZ090} that
\begin{align*}
\int_{B_{2R}(\bar{x})}|x'|^{\theta_{1}}|x|^{\theta_{2}}|x_{n}|^{\theta_{3}}dx\leq C(n,\theta_{1},\theta_{2},\theta_{3})R^{n+\theta_{1}+\theta_{2}+\theta_{3}}.
\end{align*}
We now proceed to calculate the same order lower bound of $\int_{B_{R}(\bar{x})}|x'|^{\theta_{1}}|x|^{\theta_{2}}|x_{n}|^{\theta_{3}}dx$ with respect to $R$. In the following we divide into five subcases to complete the proof.

{\bf Case 1.} If $\theta_{1}<0$ and $\theta_{3}\leq0$, it follows that
\begin{align*}
\int_{B_{R}(\bar{x})}|x'|^{\theta_{1}}|x|^{\theta_{2}}|x_{n}|^{\theta_{3}}dx\geq&\int_{B_{R}(\bar{x})}|x|^{\theta_{1}+\theta_{2}+\theta_{3}}dx\notag\\
\geq&
\begin{cases}
\int_{B_{R}(0)}|x|^{\theta_{1}+\theta_{2}+\theta_{3}}dx,&\text{if }\theta_{1}+\theta_{2}+\theta_{3}\geq0,\\
\int_{B_{R}(3R\frac{\bar{x}}{|\bar{x}|})}|x|^{\theta_{1}+\theta_{2}+\theta_{3}}dx,&\text{if }\theta_{1}+\theta_{2}+\theta_{3}<0
\end{cases}\notag\\
\geq&R^{n+\theta_{1}+\theta_{2}+\theta_{3}}
\begin{cases}
\frac{n\omega_{n}}{n+\theta_{1}+\theta_{2}+\theta_{3}},&\text{if }\theta_{1}+\theta_{2}+\theta_{3}\geq0,\\
2^{\theta_{1}+\theta_{2}+\theta_{3}}\omega_{n},&\text{if } \theta_{1}+\theta_{2}+\theta_{3}<0.
\end{cases}
\end{align*}

{\bf Case 2.} If $\theta_{1}<0$, $\theta_{3}>0$, and $\theta_{1}+\theta_{2}\geq0$, then
\begin{align*}
\int_{B_{R}(\bar{x})}|x'|^{\theta_{1}}|x|^{\theta_{2}}|x_{n}|^{\theta_{3}}dx\geq&\int_{B_{R}(\bar{x})}|x|^{\theta_{1}+\theta_{2}}|x_{n}|^{\theta_{3}}dx\geq\int_{B_{R}(\bar{x})}|x_{n}|^{\theta_{1}+\theta_{2}+\theta_{3}}dx\notag\\
\geq&\int_{B'_{R/2}(\bar{x}')}dx'\int_{\bar{x}_{n}-\sqrt{R^{2}-|x'-\bar{x}'|^{2}}}^{\bar{x}_{n}+\sqrt{R^{2}-|x'-\bar{x}'|^{2}}}|x_{n}|^{\theta_{1}+\theta_{2}+\theta_{3}}dx_{n}.
\end{align*}
For $x'\in B_{R/2}'(\bar{x}')$, we deduce from \eqref{INE001} that when $|\bar{x}_{n}|\leq \sqrt{R^{2}-|x'-\bar{x}'|^{2}}$,
\begin{align*}
&\int_{\bar{x}_{n}-\sqrt{R^{2}-|x'-\bar{x}'|^{2}}}^{\bar{x}_{n}+\sqrt{R^{2}-|x'-\bar{x}'|^{2}}}|x_{n}|^{\theta_{1}+\theta_{2}+\theta_{3}}dx_{n}\notag\\
&=\frac{(\bar{x}_{n}+\sqrt{R^{2}-|x'-\bar{x}'|^{2}})^{\theta_{1}+\theta_{2}+\theta_{3}+1}+(\sqrt{R^{2}-|x'-\bar{x}'|^{2}}-\bar{x}_{n})^{\theta_{1}+\theta_{2}+\theta_{3}+1}}{\theta_{1}+\theta_{2}+\theta_{3}+1}\notag\\
&\geq\frac{2(R^{2}-|x'-\bar{x}'|^{2})^{\frac{\theta_{1}+\theta_{2}+\theta_{3}+1}{2}}}{\theta_{1}+\theta_{2}+\theta_{3}+1}\geq\frac{3^{\frac{\theta_{1}+\theta_{2}+\theta_{3}+1}{2}}R^{\theta_{1}+\theta_{2}+\theta_{3}+1}}{2^{\theta_{1}+\theta_{2}+\theta_{3}}(\theta_{1}+\theta_{2}+\theta_{3}+1)},
\end{align*}
and, when $|\bar{x}_{n}|>\sqrt{R^{2}-|x'-\bar{x}'|^{2}}$,
\begin{align*}
&\int_{\bar{x}_{n}-\sqrt{R^{2}-|x'-\bar{x}'|^{2}}}^{\bar{x}_{n}+\sqrt{R^{2}-|x'-\bar{x}'|^{2}}}|x_{n}|^{\theta_{1}+\theta_{2}+\theta_{3}}dx_{n}\notag\\
&=\frac{(|\bar{x}_{n}|+\sqrt{R^{2}-|x'-\bar{x}'|^{2}})^{\theta_{1}+\theta_{2}+\theta_{3}+1}-(|\bar{x}_{n}|-\sqrt{R^{2}-|x'-\bar{x}'|^{2}})^{\theta_{1}+\theta_{2}+\theta_{3}+1}}{\theta_{1}+\theta_{2}+\theta_{3}+1}\notag\\
&\geq\frac{2^{\theta_{1}+\theta_{2}+\theta_{3}+1}(R^{2}-|x'-\bar{x}'|^{2})^{\frac{\theta_{1}+\theta_{2}+\theta_{3}+1}{2}}}{\theta_{1}+\theta_{2}+\theta_{3}+1}\geq\frac{3^{\frac{\theta_{1}+\theta_{2}+\theta_{3}+1}{2}}R^{\theta_{1}+\theta_{2}+\theta_{3}+1}}{\theta_{1}+\theta_{2}+\theta_{3}+1}.
\end{align*}
A combination of these above facts shows that
\begin{align*}
\int_{B_{R}(\bar{x})}|x'|^{\theta_{1}}|x|^{\theta_{2}}|x_{n}|^{\theta_{3}}dx\geq C(n,\theta_{1},\theta_{2},\theta_{3})R^{n+\theta_{1}+\theta_{2}+\theta_{3}}.
\end{align*}

{\bf Case 3.} If $\theta_{1}<0$, $\theta_{3}>0$, and $\theta_{1}+\theta_{2}<0$, it follows from \eqref{DQ812}--\eqref{DQ815} that
\begin{align*}
\int_{B_{R}(\bar{x})}|x'|^{\theta_{1}}|x|^{\theta_{2}}|x_{n}|^{\theta_{3}}dx\geq&\int_{B_{R}(\bar{x})}|x|^{\theta_{1}+\theta_{2}}|x_{n}|^{\theta_{3}}dx\geq(4R)^{\theta_{1}+\theta_{2}}\int_{B_{R}(\bar{x})}|x_{n}|^{\theta_{3}}dx\notag\\
\geq&(4R)^{\theta_{1}+\theta_{2}}\int_{B_{R/2}'(\bar{x}')}dx'\int^{\bar{x}_{n}+\sqrt{R^{2}-|x'-\bar{x}'|^{2}}}_{\bar{x}_{n}-\sqrt{R^{2}-|x'-\bar{x}'|^{2}}}|x_{n}|^{\theta_{3}}dx_{n}\notag\\
\geq& C(n,\theta_{1},\theta_{2},\theta_{3})R^{n+\theta_{1}+\theta_{2}+\theta_{3}},
\end{align*}
where we also made use of the fact that $|x|\leq|x-\bar{x}|+|\bar{x}|<4R$.

{\bf Case 4.} If $\theta_{1}\geq0$ and $\theta_{2}\geq0$, applying the proofs of \eqref{DQ806}--\eqref{DQ809} with minor modification and making use of \eqref{DQ812}--\eqref{DQ815} again, we obtain
\begin{align*}
\int_{B_{R}(\bar{x})}|x'|^{\theta_{1}}|x|^{\theta_{2}}|x_{n}|^{\theta_{3}}dx\geq&\int_{B'_{R/2}(\bar{x}')}|x'|^{\theta_{1}+\theta_{2}}dx'\int^{\bar{x}_{n}+\sqrt{R^{2}-|x'-\bar{x}'|^{2}}}_{\bar{x}_{n}-\sqrt{R^{2}-|x'-\bar{x}'|^{2}}}|x_{n}|^{\theta_{3}}dx_{n}\notag\\
\geq& C(n,\theta_{1},\theta_{2},\theta_{3})R^{n+\theta_{1}+\theta_{2}+\theta_{3}}.
\end{align*}

{\bf Case 5.} If $\theta_{1}\geq0$ and $\theta_{2}<0$, it follows from \eqref{DQ806}--\eqref{DQ809} and \eqref{DQ812}--\eqref{DQ815} that
\begin{align*}
\int_{B_{R}(\bar{x})}|x'|^{\theta_{1}}|x|^{\theta_{2}}|x_{n}|^{\theta_{3}}dx\geq&(4R)^{\theta_{2}}\int_{B'_{R/2}(\bar{x}')}|x'|^{\theta_{1}}dx'\int^{\bar{x}_{n}+\sqrt{R^{2}-|x'-\bar{x}'|^{2}}}_{\bar{x}_{n}-\sqrt{R^{2}-|x'-\bar{x}'|^{2}}}|x_{n}|^{\theta_{3}}dx_{n}\notag\\
\geq& C(n,\theta_{1},\theta_{2},\theta_{3})R^{n+\theta_{1}+\theta_{2}+\theta_{3}},
\end{align*}
where we also utilized the fact that $|x|\leq|x-\bar{x}|+|\bar{x}|<4R$. A consequence of these above facts shows that Theorem \ref{Lem009} holds.

\end{proof}

According to the theories of $A_{p}$-weights, we directly obtain the following anisotropic weighted Sobolev and Poincar\'{e} inequalities, which are two fundamental tools to study relevant Sobolev spaces and partial differential equations. Combining Theorems 1.8 and 15.23 in \cite{HKM2006} and Theorem \ref{THM000Z} above, we have the following anisotropic weighted Sobolev embedding theorem.
\begin{corollary}\label{CO901}
Let $n\geq2$, $1<q<p<nq$, $\sum^{3}_{i=1}\theta_{i}\geq n(q-1)$, and $(\theta_{1},\theta_{2},\theta_{3})\in[(\mathcal{A}\cup\mathcal{B})\cap(\mathcal{C}_{p}\cup\mathcal{D}_{p})]\cup\{\theta_{1}=\theta_{3}=0,\,\theta_{2}\geq n(p-1)\}$. Then we obtain that for any $u\in W_{0}^{1,p}(B_{R}(x_{0}),w)$,
\begin{align}\label{QE90}
\|u\|_{L^{p\chi}(B_{R}(x_{0}),w)}\leq C(n,p,q,\theta_{1},\theta_{2},\theta_{3})\|\nabla u\|_{L^{p}(B_{R}(x_{0}),w)},
\end{align}
where $w=|x'|^{\theta_{1}}|x|^{\theta_{2}}|x_{n}|^{\theta_{3}}$ and $\chi$ is given by
\begin{align}\label{CHI}
\chi:=\frac{n+\sum^{3}_{i=1}\theta_{i}}{n+\sum^{3}_{i=1}\theta_{i}-p}.
%\begin{cases}
%\frac{n}{n-p/q},&\text{if }p=n+\sum^{3}_{i=1}\theta_{i},\vspace{0.3ex}\\
%\min\big\{\frac{n}{n-p/q},\frac{n+\sum^{3}_{i=1}\theta_{i}}{n+\sum^{3}_{i=1}\theta_{i}-p}\big\},&\text{if }p<n+\sum^{3}_{i=1}\theta_{i}.
%\end{cases}
\end{align}

\end{corollary}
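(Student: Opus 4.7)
The plan is to derive \eqref{QE90} as a direct consequence of the weighted Sobolev embedding of Heinonen--Kilpel\"{a}inen--Martio (Theorems~1.8 and 15.23 of \cite{HKM2006}), using the effective dimension identified via Lemma~\ref{MWQAZ090} and the $A_{p}$-condition supplied by Theorem~\ref{THM000Z}; the extremal radial case is handled separately by the Caffarelli--Kohn--Nirenberg inequality \eqref{CKN90}.

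\textbf{Step 1 (Effective dimension and doubling).} Read off from Lemma~\ref{MWQAZ090} that $\mu(B_r)\sim r^{d}$ with $d:=n+\sum_{i=1}^{3}\theta_i$, so the natural Sobolev conjugate for the weighted measure is $pd/(d-p)=p\chi$, matching \eqref{CHI}. The doubling property of $d\mu$ is guaranteed by Theorem~\ref{Lem009}; a quantitative refinement of its proof also supplies the matching lower mass bound $\mu(B_{R'}(y))/\mu(B_R(x))\gtrsim(R'/R)^{d}$ whenever $B_{R'}(y)\subset B_R(x)$, which is the $d$-dimensional control required in the HKM framework. The hypotheses $1<q<p<nq$ and $\sum\theta_i\geq n(q-1)$ translate to $d\geq nq$ and are precisely what is needed to force the Sobolev exponent produced by HKM to be the explicit $p\chi$ rather than an undetermined higher-integrability exponent.

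\textbf{Step 2 (Main range via HKM and the extremal radial case).} For $(\theta_1,\theta_2,\theta_3)\in(\mathcal{A}\cup\mathcal{B})\cap(\mathcal{C}_p\cup\mathcal{D}_p)$, Theorem~\ref{THM000Z} gives $w\in A_p$. Combined with Step~1, the hypotheses of Theorem~15.23 in \cite{HKM2006} are met, and \eqref{QE90} follows after scaling to $B_R(x_0)$ and absorbing the $\mu(B_R)$-averages. In the remaining regime $\theta_1=\theta_3=0$, $\theta_2\geq n(p-1)$, the weight $|x|^{\theta_2}$ fails to be in $A_p$ and HKM does not apply off the shelf. Since this case is radial, \eqref{QE90} reduces to a special case of the Caffarelli--Kohn--Nirenberg inequality \eqref{CKN90} with $\gamma_1=\theta_2/(p\chi)$, $\gamma_2=\theta_2/p$, $\gamma_3=0$, $s=p\chi$, and $a=1$; the CKN scaling identity collapses to the defining relation $p\chi=p(n+\theta_2)/(n+\theta_2-p)$, and the admissible parameter range of \cite{CKN1984} permits $\theta_2\geq n(p-1)$, giving the conclusion.

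The main obstacle is the quantitative bookkeeping in Step~1: one must verify that the proof of Theorem~\ref{Lem009}, which yields two-sided control of $\mu(B_R(\bar x))$ with polynomial dependence on $R$, can be upgraded to a uniform comparison $\mu(B_{R'}(y))/\mu(B_R(x))\gtrsim (R'/R)^{d}$ between nested balls with possibly different centers. This requires revisiting each of the three cases in the proof of Theorem~\ref{Lem009} and tracking the polynomial dependence on the radii carefully, so that the HKM hypotheses are met with the sharp exponent $d$. Once this quantitative $d$-dimensionality is in hand, both the main-range embedding and the radial reduction become routine invocations of existing results, and \eqref{QE90} drops out with the claimed constant.
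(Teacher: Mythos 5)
Your argument for the $A_p$ range $(\mathcal{A}\cup\mathcal{B})\cap(\mathcal{C}_p\cup\mathcal{D}_p)$ follows the same route as the paper: combine Theorem~\ref{THM000Z} with Theorems~1.8 and 15.23 of \cite{HKM2006}, the $q$-hypothesis and $\sum\theta_i\geq n(q-1)$ serving precisely to force $\chi\leq n/(n-p/q)$, as the remark after the corollary explains. However, the ``main obstacle'' you flag in Step~1 is not a real one. The lower mass bound $\mu(B_{R'}(y))/\mu(B_R(x))\gtrsim(R'/R)^d$ between nested balls is not a separate hypothesis of the HKM theorem that must be re-derived from a case-by-case refinement of Theorem~\ref{Lem009}; it is an automatic consequence of the $A_q$ property (which yields $\gtrsim(R'/R)^{nq}$) together with $nq\leq d$ and $R'\leq R$, so there is nothing to ``upgrade.'' Where you genuinely diverge from the paper is in the extremal regime $\theta_1=\theta_3=0$, $\theta_2\geq n(p-1)$. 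The paper handles this through quasiconformal mappings via Corollary~15.35 of \cite{HKM2006}, as made explicit in the remark following Lemma~\ref{QWZM090}, since the weight $|x|^{\theta_2}$ falls outside $A_p$ there. You instead read \eqref{QE90} off the Caffarelli--Kohn--Nirenberg inequality \eqref{CKN90} with $a=1$, $s=p\chi$, $\gamma_1=\theta_2/(p\chi)$, $\gamma_2=\theta_2/p$. This is valid: the dimensional balance collapses exactly to the defining relation for $\chi$, and the CKN admissibility condition reduces to $0\leq\gamma_2-\gamma_1=\theta_2/(n+\theta_2)\leq1$, which always holds for $\theta_2>0$. Your CKN route is arguably more elementary and self-contained than the quasiconformal argument, at the price of invoking an external sharp inequality rather than staying inside the HKM potential-theoretic framework.
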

\begin{remark}
The condition of $1<q<p<nq,\,\sum^{3}_{i=1}\theta_{i}\geq n(q-1)$ is assumed to make $\chi\leq\frac{n}{n-p/q}$ for the purpose of meeting the requirement of Theorem 15.23 in \cite{HKM2006}.
\end{remark}
\begin{remark}
When $\theta_{3}=0$, making use of Theorem 1 in \cite{LY2021}, we can enlarge the range of $(\theta_{1},\theta_{2},\theta_{3})$ which lets \eqref{QE90} hold, especially removing the restrictive condition $\mathcal{C}_{q}\cap\mathcal{D}_{q}$. Therefore, it is significant to extend the anisotropic Caffarelli-Kohn-Nirenberg type inequalities in \cite{LY2021} to the case of $\theta_{3}>-1$. More importantly, anisotropic Caffarelli-Kohn-Nirenberg type inequalities can provide a fine parabolic Sobolev inequality with anisotropic weights, which allows us to extend the following regularity results to weighted parabolic equations as shown in \cite{MZ2023,MZ2024}. For more related investigations on Sobolev and Poincar\'{e} inequalities with $A_{p}$-weights covering power-type weights, see \cite{BCG2006,BT2002,CR2013,CRS2016,CW1985,L1986,NS2018,NS2019,DD2023,CW2006} and the references therein.

\end{remark}

Fix $1<p_{0}<\frac{n}{n-1}$ and define
\begin{align*}
\begin{cases}
\mathcal{F}_{p_{0}}=\{(\theta_{1},\theta_{2},\theta_{3}):\,\theta_{1}>-\frac{(n-1)(p_{0}-1)}{p_{0}},\,\theta_{2}\geq0,\,\theta_{3}>-\frac{p_{0}-1}{p_{0}}\},\vspace{0.3ex}\\
\mathcal{G}_{p_{0}}=\{(\theta_{1},\theta_{2},\theta_{3}):\,\theta_{1}>-\frac{(n-1)(p_{0}-1)}{p_{0}},\,\theta_{2}<0,\,\theta_{1}+\theta_{2}+\theta_{3}>-\frac{n(p_{0}-1)}{p_{0}}\}.
\end{cases}
\end{align*}
We now state the anisotropic weighted Poincar\'{e} inequalities as follows.
\begin{lemma}\label{QWZM090}
Let $n\geq2$, $1<p<\infty$, and $1<p_{0}<\frac{n}{n-1}$. Then we have

$(i)$ if $(\theta_{1},\theta_{2},\theta_{3})\in[(\mathcal{A}\cup\mathcal{B})\cap(\mathcal{C}_{p}\cup\mathcal{D}_{p})]\cup\{\theta_{1}=\theta_{3}=0,\,\theta_{2}\geq n(p-1)\}$, then there exists some constant $1<\tilde{p}=\tilde{p}(n,p,\theta_{1},\theta_{2},\theta_{3})<p$ such that for any $B:=B_{R}(\bar{x})\subset\mathbb{R}^{n}$, $R>0$, and $u\in W^{1,\tilde{p}}(B,w)$,
\begin{align}\label{A065}
\int_{B}|u-u_{B}|^{\tilde{p}}d\mu\leq C(n,p,\theta_{1},\theta_{2},\theta_{3})R^{\tilde{p}}\int_{B}|\nabla u|^{\tilde{p}}d\mu;
\end{align}

$(ii)$ if $(\theta_{1},\theta_{2},\theta_{3})\in\mathcal{F}_{p_{0}}\cup\mathcal{G}_{p_{0}}$, then for any $B:=B_{R}\subset\mathbb{R}^{n}$ and $u\in W^{1,1}(B)$,
\begin{align}\label{A095}
\int_{B}|u-u_{B}|d\mu\leq C(n,p_{0},\theta_{1},\theta_{2},\theta_{3})R^{1+\theta_{1}+\theta_{2}+\theta_{3}}\int_{B}|\nabla u|dx,
\end{align}
where $d\mu:=wdx=|x'|^{\theta_{1}}|x|^{\theta_{2}}|x_{n}|^{\theta_{3}}dx$ and $u_{B}=\frac{1}{\mu(B)}\int_{B}u d\mu.$
\end{lemma}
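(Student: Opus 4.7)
The plan is to treat parts (i) and (ii) by quite different mechanisms. For part (i), the key observation is that Theorem \ref{THM000Z} gives $w \in A_p$, and the Muckenhoupt class is \emph{open}: if $w \in A_p$, then $w \in A_{\tilde p}$ for some $\tilde p = \tilde p(n,p,\theta_1,\theta_2,\theta_3) < p$. This self-improvement follows from the reverse Hölder inequality for $A_p$-weights (see Chapters 1 and 15 of \cite{HKM2006}). Once $w \in A_{\tilde p}$ is established, \eqref{A065} is precisely the standard weighted $(\tilde p,\tilde p)$-Poincaré inequality for Muckenhoupt weights on a doubling metric measure space, which applies because $d\mu$ is doubling by Theorem \ref{Lem009} and $\mathbb{R}^n$ supports the unweighted $(1,1)$-Poincaré estimate. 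The outlier case $\theta_1 = \theta_3 = 0$, $\theta_2 \geq n(p-1)$ is \emph{not} covered by the $A_p$ framework and needs a direct argument exploiting the radial structure of $w = |x|^{\theta_2}$ and a suitable change of variables; this is the main subtlety of part (i).

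For part (ii), the strategy is to peel off the weight via Hölder and then reduce to the classical unweighted Sobolev-Poincaré estimate. Using conjugate exponents $p_0$ and $p_0' = p_0/(p_0-1)$,
\begin{align*}
\int_B |u - u_B|\,d\mu \leq \Bigl(\int_B |u-u_B|^{p_0}\,dx\Bigr)^{1/p_0} \Bigl(\int_B w^{p_0'}\,dx\Bigr)^{1/p_0'}.
\end{align*}
Since $p_0 < n/(n-1)$, combining the classical $(1,1)$-Sobolev-Poincaré bound $\|u - u_B\|_{L^{n/(n-1)}(B_R)} \leq C\|\nabla u\|_{L^1(B_R)}$ with Hölder in space yields $\|u - u_B\|_{L^{p_0}(B_R)} \leq C R^{n/p_0 - (n-1)}\|\nabla u\|_{L^1(B_R)}$. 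For the second factor, observe that the thresholds defining $\mathcal{F}_{p_0}$ and $\mathcal{G}_{p_0}$ are exactly the rescaling by $1/p_0'$ of those defining $\mathcal{A}$ and $\mathcal{B}$: for instance, $\theta_1 > -(n-1)(p_0-1)/p_0$ becomes $p_0'\theta_1 > -(n-1)$ after multiplying by $p_0'$, and similarly for the other thresholds. Hence $(p_0'\theta_1, p_0'\theta_2, p_0'\theta_3) \in \mathcal{A} \cup \mathcal{B}$, so Lemma \ref{MWQAZ090} applied to the weight $w^{p_0'} = |x'|^{p_0'\theta_1}|x|^{p_0'\theta_2}|x_n|^{p_0'\theta_3}$ yields $\bigl(\int_{B_R} w^{p_0'}\,dx\bigr)^{1/p_0'} \sim R^{n/p_0' + \theta_1 + \theta_2 + \theta_3}$. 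Multiplying the two bounds and using $n/p_0 + n/p_0' = n$, the $R$-exponent telescopes to $n - (n-1) + \theta_1 + \theta_2 + \theta_3 = 1 + \theta_1 + \theta_2 + \theta_3$, which is exactly \eqref{A095}.

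The main obstacle, beyond the non-$A_p$ outlier in part (i), is the bookkeeping in part (ii): one must verify that the specific thresholds defining $\mathcal{F}_{p_0}$ and $\mathcal{G}_{p_0}$ are the precise conditions needed for $w^{p_0'}$ to be locally integrable with the correct $R$-scaling. Once this matching is clean, both parts reduce to standard Sobolev-Poincaré machinery combined with the weight properties already established in Lemma \ref{MWQAZ090} and Theorem \ref{THM000Z}.
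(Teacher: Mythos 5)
Your treatment of part (i) is conceptually the same as the paper's: both rely on the $A_p$ self-improvement (openness of the Muckenhoupt classes) from \cite{HKM2006} together with the corresponding weighted Poincar\'{e}/embedding theorems, and both isolate the outlier case $\theta_1=\theta_3=0,\ \theta_2\geq n(p-1)$ as lying outside the $A_p$ framework. The paper cites Theorem 15.13, Theorem 15.21 and Corollary 15.35 of \cite{HKM2006} precisely; your appeal to quasiconformal/radial structure for the outlier is exactly what Corollary 15.35 encodes.

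For part (ii) you take a genuinely different route. The paper starts from the Fabes--Kenig--Serapioni pointwise Riesz-potential bound
\[
\Big|u(x)-\dashint_{B_1}u\,dy\Big|\le C\int_{B_1}\frac{|\nabla u(y)|}{|x-y|^{n-1}}\,dy,
\]
integrates against $w\,dx$, swaps the order of integration, and applies H\"older with exponents $p_0,p_0'$ \emph{inside} the kernel integral $\int_{B_1}\frac{w(x)}{|x-y|^{n-1}}\,dx$. You instead apply H\"older to the outermost integral $\int_B|u-u_B|\,d\mu$, splitting off the $L^{p_0'}$ norm of $w$, and then invoke the unweighted $(p_0,1)$-Sobolev--Poincar\'{e} inequality. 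Both yield the same $R$-exponent telescoping to $1+\theta_1+\theta_2+\theta_3$ after plugging in $\mu'(B_R)\sim R^{n+p_0'(\theta_1+\theta_2+\theta_3)}$ from Lemma \ref{MWQAZ090}; your observation that $\mathcal{F}_{p_0}\cup\mathcal{G}_{p_0}$ rescales precisely into $\mathcal{A}\cup\mathcal{B}$ is the right bookkeeping. Your approach is more elementary since it bypasses the potential representation.

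There is one genuine gap in your write-up of part (ii): you write the ``classical $(1,1)$-Sobolev--Poincar\'{e} bound'' with $u_B$ on the left, but $u_B=\frac{1}{\mu(B)}\int_Bu\,d\mu$ is the \emph{weighted} average, whereas the classical inequality holds only with the \emph{unweighted} mean $\dashint_Bu\,dx$ (or, more generally, the $L^{p_0}(B,dx)$-best constant). You must first reduce to the unweighted mean via the standard ``replace the average by any constant within a factor of two'' device, i.e.
\[
\int_B|u-u_B|\,d\mu\le 2\int_B\Big|u-\dashint_Bu\,dx\Big|\,d\mu,
\]
which follows from $|u_B-c|\le\frac{1}{\mu(B)}\int_B|u-c|\,d\mu$ with $c=\dashint_Bu\,dx$, and only then apply H\"older and the classical Sobolev--Poincar\'{e}. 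The paper performs exactly this passage explicitly at the end of its proof; without it your Sobolev--Poincar\'{e} step is not directly valid because the infimum over constants need not be attained at the $\mu$-average.
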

\begin{remark}
It is worth pointing out that we use the theories of quasiconformal mappings rather than that of $A_{p}$-weights to keep \eqref{A065} holding for the weight $|x|^{\theta_{2}}$ if $\theta_{2}\geq n(p-1)$, see Corollary 15.35 in \cite{HKM2006}. This fact also indicates that $A_{p}$-weight is not a necessary condition to ensure that \eqref{A065} holds. Moreover, in contrast to \eqref{A065}, the result in \eqref{A095} holds without restriction in terms of the upper bound on the ranges of $\theta_{i}$, $i=1,2,3$. Then it is natural to ask if \eqref{A065} still holds when we get rid of restrictive condition $\mathcal{C}_{p}\cup \mathcal{D}_{p}$.

\end{remark}

\begin{proof}
To begin with, a combination of Theorem 15.13, Theorem 15.21 and Corollary 15.35 in \cite{HKM2006} and Theorem \ref{THM000Z} above shows that \eqref{A065} holds.

By scaling, it suffices to demonstrate that \eqref{A095} holds when $R=1$. Making use of Lemma 1.4 of Fabes-Kenig-Serapioni \cite{FKS1982}, we have
\begin{align*}
\left|u(x)-\dashint_{B_{1}}udy\right|\leq\dashint_{B_{1}}|u(x)-u(y)|dy\leq C\int_{B_{1}}\frac{|\nabla u(y)|}{|x-y|^{n-1}}dy,\quad\dashint_{B_{1}}:=\frac{1}{|B_{1}|}\int_{B_{1}},
\end{align*}
which reads that
\begin{align}\label{A091}
\int_{B_{1}}\left|u(x)-\dashint_{B_{1}}udy\right|wdx\leq C\int_{B_{1}}\int_{B_{1}}\frac{|x'|^{\theta_{1}}|x|^{\theta_{2}}|x_{n}|^{\theta_{3}}}{|x-y|^{n-1}}|\nabla u(y)| dxdy.
\end{align}
In light of $(\theta_{1},\theta_{2},\theta_{3})\in\mathcal{F}_{p_{0}}\cup\mathcal{G}_{p_{0}}$, we obtain from H\"{o}lder's inequality and Lemma \ref{MWQAZ090} that for $1<p_{0}<\frac{n}{n-1}$,
\begin{align*}
&\int_{B_{1}}\frac{|x'|^{\theta_{1}}|x|^{\theta_{2}}|x_{n}|^{\theta_{3}}}{|x-y|^{n-1}}dx\notag\\
&\leq\bigg(\int_{B_{2}(y)}\frac{1}{|x-y|^{p_{0}(n-1)}}dx\bigg)^{\frac{1}{p_{0}}}\left(\int_{B_{1}}|x'|^{\frac{\theta_{1}p_{0}}{p_{0}-1}}|x|^{\frac{\theta_{2}p_{0}}{p_{0}-1}}|x_{n}|^{\frac{\theta_{3}p_{0}}{p_{0}-1}}dx\right)^{\frac{p_{0}-1}{p_{0}}}\leq C.
\end{align*}
Substituting this into \eqref{A091}, we have
\begin{align*}
\int_{B_{1}}\left|u(x)-\dashint_{B_{1}}udy\right|wdx\leq C\int_{B_{1}}|\nabla u(x)|dx.
\end{align*}
It then follows that
\begin{align*}
\int_{B_{1}}|u(x)-u_{B_{1}}|wdx\leq&\int_{B_{1}}\left|u(x)-\dashint_{B_{1}}u\right|wdx+\int_{B_{1}}\left|\dashint_{B_{1}}u-u_{B_{1}}\right|wdx\notag\\
\leq&2\int_{B_{1}}\left|u(x)-\dashint_{B_{1}}u\right|wdx\leq C\int_{B_{1}}|\nabla u|dx,
\end{align*}
where $u_{B_{1}}=\frac{1}{\mu(B_{1})}\int_{B_{1}}ud\mu$. Then \eqref{A095} holds.

\end{proof}

\section{Applications to anisotropic weighted $p$-Laplace equations}\label{SEC03}

$p$-Laplace equation comes up in a large variety of physical contexts and involves wide applications from game theory to nonlinear flow in porous media, mechanics and image processing etc, see e.g. \cite{DLK2013,K1996,L2019}. These important applications stimulate us to investigate the local behavior of solutions to anisotropically weighted $p$-Laplace equations with non-homogeneous terms in the following.

We first introduce some notations for later use. For $\bar{x}\in\mathbb{R}^{n}$ and $R>0$, define $B^{+}_{R}(\bar{x})=B_{R}(\bar{x})\cap\{(x',x_{n}):x_{n}>0\}$. For brevity, we simplify the notation $B^{+}_{R}(\bar{x})$ as $B^{+}_{R}$ if $\bar{x}=0$. Denote $\partial'B^{+}_{R}(\bar{x})=B_{R}(\bar{x})\cap\{x_{n}=0\}$ and $\partial''B_{R}^{+}(\bar{x})=\partial B_{R}^{+}(\bar{x})\setminus\partial'B^{+}_{R}(\bar{x})$. Given a weight $w$, we denote by $L^{p}(B^{+}_{R}(\bar{x}),w)$ and $W^{1,p}(B^{+}_{R}(\bar{x}),w)$ the weighted $L^{p}$ spaces and weighted Sobolev spaces with their norms, respectively, represented as
\begin{align*}
\begin{cases}
\|u\|_{L^{p}(B^{+}_{R}(\bar{x}),w)}=\Big(\int_{B^{+}_{R}(\bar{x})}|u|^{p}wdx\Big)^{\frac{1}{p}},\\
\|u\|_{W^{1,p}(B^{+}_{R}(\bar{x}),w)}=\Big(\int_{B^{+}_{R}(\bar{x})}|u|^{p}wdx\Big)^{\frac{1}{p}}+\left(\int_{B^{+}_{R}(\bar{x})}|\nabla u|^{p}wdx\right)^{\frac{1}{p}}.
\end{cases}
\end{align*}
For $\phi_{0}\in L^{\infty}(\partial'' B^{+}_{1})$ and $p>1$, we consider the general non-homogeneous weighted $p$-Laplace equations satisfying the Dirichlet condition as follows:
\begin{align}\label{PROBLEM001}
\begin{cases}
-\mathrm{div}(w|\nabla u|^{p-2}\nabla u)=f_{0}+wf_{1},& \mathrm{in}\;B_{1}^{+},\\
u=0,&\text{on }\partial'B^{+}_{1},\\
u=\phi_{0},&\text{on }\partial'' B^{+}_{1}.
\end{cases}
\end{align}
Here $w=|x'|^{\theta_{1}}|x|^{\theta_{2}}|x_{n}|^{\theta_{3}}$, the ranges of $\theta_{i}$, $i=1,2,3$ are prescribed in the following theorems, $f_{0}\in L^{\frac{mp\chi}{m(\chi-1)+\chi(p-1)}}(B_{1}^{+},w^{-\frac{\chi(m-1)(p-1)+m}{m(\chi-1)+\chi(p-1)}})$, and $f_{1}\in L^{\frac{mp\chi}{m(\chi-1)+\chi(p-1)}}(B^{+}_{1},w)$, where $\chi$ is given by \eqref{CHI} and $m>\frac{n+\sum^{3}_{i=1}\theta_{i}}{p}$.
We say that $u\in W^{1,p}(B_{1}^{+},w)$ is termed a weak solution of \eqref{PROBLEM001} provided that for any $\varphi\in W_{0}^{1,p}(B_{1}^{+},w)$,
\begin{align*}
\int_{B_{1}^{+}}w|\nabla u|^{p-2}\nabla u\cdot\nabla\varphi dx=\int_{B_{1}^{+}}(f_{0}+wf_{1})\varphi.
\end{align*}

With regard to the local behavior of solution to problem \eqref{PROBLEM001} near the origin, we have
\begin{theorem}\label{ZWTHM90}
Assume that $n\geq2$, $1<q<p<nq$, $\sum^{3}_{i=1}\theta_{i}\geq n(q-1)$, $(\theta_{1},\theta_{2},\theta_{3})\in[(\mathcal{A}\cup\mathcal{B})\cap(\mathcal{C}_{p}\cup\mathcal{D}_{p})]\cup\{\theta_{1}=\theta_{3}=0,\,\theta_{2}\geq n(p-1)\}$, and $m>\frac{n+\sum^{3}_{i=1}\theta_{i}}{p}$. Let $u$ be the weak solution of problem \eqref{PROBLEM001}. Then there exists a small constant $0<\alpha=\alpha(n,m,p,q,\theta_{1},\theta_{2},\theta_{3})<1$ such that for any $\gamma>0,$
\begin{align}\label{QNAW001ZW}
u(x)=u(0)+O(1)(\|u\|_{L^{\gamma}(B_{1}^{+},w)}+\mathcal{H})|x|^{\alpha},\quad\text{for any }x\in B^{+}_{1/2},
\end{align}
where $O(1)$ satisfies that $|O(1)|\leq C=C(n,m,p,q,\gamma,\theta_{1},\theta_{2},\theta_{3}),$ and
\begin{align}\label{F09}
\mathcal{H}:=&\|f_{0}\|^{\frac{1}{p-1}}_{L^{\frac{mp\chi}{m(\chi-1)+\chi(p-1)}}(B_{1}^{+},w^{-\frac{\chi(m-1)(p-1)+m}{m(\chi-1)+\chi(p-1)}})}+\|f_{1}\|^{\frac{1}{p-1}}_{L^{\frac{mp\chi}{m(\chi-1)+\chi(p-1)}}(B_{1}^{+},w)}.
\end{align}
\end{theorem}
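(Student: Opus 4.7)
The argument adapts the classical De Giorgi–Nash–Moser scheme to the anisotropic weighted setting, using Corollary~\ref{CO901} and Lemma~\ref{QWZM090} in place of the usual Sobolev and Poincar\'e inequalities. Since the origin lies on the Dirichlet portion $\partial'B_{1}^{+}$, I would first extend $u$ and the forcing $f_{0}+wf_{1}$ to the full ball $B_{1}$ by odd reflection across $\{x_{n}=0\}$: because $w=|x'|^{\theta_{1}}|x|^{\theta_{2}}|x_{n}|^{\theta_{3}}$ is even in $x_{n}$ and the components of the flux $w|\nabla u|^{p-2}\nabla u$ have the appropriate parities, the reflected function $\tilde{u}$ is a weak solution of the same equation on $B_{1}$, and the theorem reduces to an interior H\"older estimate at the interior point $0\in B_{1}$ for an equation driven by an $A_{p}$-weight covered by Theorem~\ref{THM000Z}.

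The first step is a weighted Caccioppoli inequality, obtained by testing the equation with $\eta^{p}(\tilde{u}-k)_{\pm}$ for a Lipschitz cut-off $\eta$ supported in a ball $B_{R}$ and truncation level $k$:
\begin{equation*}
\int_{B_{R}}\eta^{p}|\nabla(\tilde{u}-k)_{\pm}|^{p}\,d\mu\leq C\int_{B_{R}}|\nabla\eta|^{p}(\tilde{u}-k)_{\pm}^{p}\,d\mu+(\text{forcing from }f_{0},f_{1}).
\end{equation*}
The forcing is handled by H\"older's inequality; the exponent $\frac{mp\chi}{m(\chi-1)+\chi(p-1)}$ in the integrability class for $f_{0},f_{1}$ is chosen precisely so that these contributions fall on the correct Sobolev scale. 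Combining the Caccioppoli inequality with the anisotropic weighted Sobolev embedding of Corollary~\ref{CO901} yields a reverse-H\"older inequality on nested balls, and a standard Moser iteration on shrinking radii with increasing exponents, followed by an interpolation step that lowers the right-hand $L^{p}$-norm down to $L^{\gamma}$ for arbitrary $\gamma>0$, produces the local bound
\begin{equation*}
\|\tilde{u}\|_{L^{\infty}(B_{1/2})}\leq C\bigl(\|\tilde{u}\|_{L^{\gamma}(B_{1},w)}+\mathcal{H}\bigr).
\end{equation*}

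The second step is an oscillation decay at the origin. Setting $M(r)=\sup_{B_{r}}\tilde{u}$ and $m(r)=\inf_{B_{r}}\tilde{u}$, the nonnegative functions $M(r)-\tilde{u}$ and $\tilde{u}-m(r)$ are weak supersolutions of the same structural type. A De Giorgi level-set argument applied to these supersolutions, built from the weighted Poincar\'e inequality in Lemma~\ref{QWZM090} and the doubling of $d\mu$ from Theorem~\ref{Lem009}, produces a dichotomy yielding constants $\theta\in(0,1)$ and $\alpha_{0}>0$ with
\begin{equation*}
\mathop{\mathrm{osc}}_{B_{r/2}}\tilde{u}\leq\theta\mathop{\mathrm{osc}}_{B_{r}}\tilde{u}+Cr^{\alpha_{0}}\mathcal{H},\qquad 0<r\leq\tfrac{1}{2}.
\end{equation*}
Iterating across the dyadic scales $r_{k}=2^{-k}$ and summing the resulting geometric series gives $\mathop{\mathrm{osc}}_{B_{r}}\tilde{u}\leq Cr^{\alpha}(\|\tilde{u}\|_{L^{\infty}(B_{1/2})}+\mathcal{H})$ for some small $\alpha\in(0,1)$ depending on $n,m,p,q,\theta_{1},\theta_{2},\theta_{3}$, which combined with the $L^{\infty}$-bound of Step~1 yields \eqref{QNAW001ZW}.

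\textbf{Main obstacle.} The delicate point is the oscillation decay, since $w$ degenerates or blows up on $\{x'=0\}\cup\{x_{n}=0\}\cup\{0\}$ and $\mu$ is not comparable to Lebesgue measure there. The correct quantitative substitutes are supplied by the anisotropic weighted Sobolev and Poincar\'e inequalities above, while doubling of $\mu$ governs the propagation of level sets through the De Giorgi iteration. Ensuring that the forcing contribution carries a strictly positive scaling exponent $r^{\alpha_{0}}$---so that iteration genuinely produces H\"older regularity rather than mere continuity---is where the hypothesis $m>(n+\sum_{i}\theta_{i})/p$ enters, and tying the functional space prescribed for $f_{0},f_{1}$ to the Sobolev exponent $p\chi$ requires careful bookkeeping of the scaling of the forcing norm under dyadic rescalings of $B_{r}$.
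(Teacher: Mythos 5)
Your outline follows the same global strategy as the paper (Caccioppoli inequality, $L^\infty$ bound via De Giorgi truncation and interpolation down to $L^\gamma$, then a De Giorgi oscillation decay at the origin), and your identification of the roles of Corollary~\ref{CO901}, Lemma~\ref{QWZM090}, doubling, and the condition $m>(n+\sum_i\theta_i)/p$ matches the paper's Lemmas~\ref{lem003}, \ref{LEM0035ZZW}, \ref{lem005ZZW}, Theorems~\ref{CORO06} and \ref{THE621}. The one genuinely different choice is the odd reflection across $\{x_{n}=0\}$.

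The paper does not reflect: it keeps the half-ball formulation and extends $u$ by \emph{zero} into $B_{1}\setminus B_{1}^{+}$, so that for every truncation level $k\geq 0$ the bad set $A(k,R)=\{u>k\}\cap B_{R}$ is contained in $B_{R}^{+}$, and hence $|B_{R}\setminus A(k,R)|_{\mu}\geq |B_{R}\setminus B_{R}^{+}|_{\mu}\geq cR^{n+\sum_i\theta_i}$ for free. This is the measure-theoretic anchor that the isoperimetric Lemma~\ref{prop002} needs, and it makes the boundary-point oscillation decay in Lemma~\ref{lem005ZZW} go through without having to convert the problem into an interior one. Your odd reflection achieves the same anchoring (by the symmetry $|\{\tilde u>0\}\cap B_{R}|_{\mu}=|\{\tilde u<0\}\cap B_{R}|_{\mu}$), and the remaining De Giorgi machinery is identical, so the two routes buy essentially the same thing. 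The zero-extension route is a bit leaner, because it completely bypasses the question of whether the reflected function is a weak solution on the full ball.

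That last point is the one real gap in your proposal: you \emph{assert} that $\tilde u$ is a weak solution on $B_{1}$ "because the flux has the appropriate parities," but for the nonlinear weighted $p$-Laplacian with $w=|x'|^{\theta_1}|x|^{\theta_2}|x_n|^{\theta_3}$, which degenerates or blows up on $\{x_n=0\}$, this needs an argument. You must check (i) that odd extension of a $W_{0}^{1,p}(B_{1}^{+},w)$ function lands in $W^{1,p}(B_{1},w)$, which hinges on the trace condition $-1<\theta_{3}<p-1$; and (ii) that the weak formulation holds across $\{x_n=0\}$ for \emph{every} test function $\varphi\in C_{c}^{\infty}(B_{1})$, not merely odd ones. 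For a nonlinear operator you cannot simply decompose the equation, and instead you have to split $\varphi$ into its even and odd parts, verify the even contribution cancels by the parities of $w$ and of $w|\nabla\tilde u|^{p-2}\nabla\tilde u$, and verify the odd part reduces to the half-ball weak formulation using that $\varphi_{o}|_{B_{1}^{+}}\in W_{0}^{1,p}(B_{1}^{+},w)$. This works in the paper's parameter range, but it is not automatic, and the paper's choice of zero extension is precisely designed to avoid having to establish it.
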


When the weight $|x'|^{\theta_{1}}|x|^{\theta_{2}}|x_{n}|^{\theta_{3}}$ becomes a single power-type weight, that is, only one of $\theta_{i}$, $i=1,2,3$ is nonzero, we establish the following H\"{o}lder estimates.
\begin{theorem}\label{THM002}
Let $n\geq2$, $1<q<p<nq$, $\sum^{3}_{i=1}\theta_{i}\geq n(q-1)$,  $(\theta_{1},\theta_{2},\theta_{3})\in[(\mathcal{A}\cup\mathcal{B})\cap(\mathcal{C}_{p}\cup\mathcal{D}_{p})]\cup\{\theta_{1}=\theta_{3}=0,\,\theta_{2}\geq n(p-1)\}$, and $m>\frac{n+\sum^{3}_{i=1}\theta_{i}}{p}$. Assume that $u$ is the weak solution of problem \eqref{PROBLEM001}. Then there exists some constant $0<\tilde{\alpha}<\frac{\alpha}{1+\alpha}$ with $\alpha$ determined by Theorem \ref{ZWTHM90}, such that when only one of $\theta_{i}$, $i=1,2,3$ is nonzero,
\begin{align}\label{Z010}
|u(x)-u(y)|\leq C(\|\phi_{0}\|_{L^{\infty}(\partial''B^{+}_{1})}+\mathcal{H})|x-y|^{\tilde{\alpha}},\quad\text{for all}\;x,y\in B^{+}_{1/8},
\end{align}
where $\mathcal{H}$ is defined by \eqref{F09}, the positive constants $\tilde{\alpha}$ and $C$ depend only on $n,m,p,q$ and $\max\limits_{1\leq i\leq3}\theta_{i}$.

\end{theorem}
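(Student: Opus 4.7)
The plan is to upgrade the pointwise estimate at the origin from Theorem \ref{ZWTHM90} into a uniform H\"older estimate on $B^{+}_{1/8}$ by combining it with classical interior $p$-Laplace regularity away from the singular set of $w$. Introduce the singular set $\Sigma$ of $w$: $\Sigma=\{0\}$ if $\theta_2\neq 0$, $\Sigma=\{x'=0\}$ if $\theta_1\neq 0$, and $\Sigma=\{x_n=0\}$ if $\theta_3\neq 0$. The crucial feature in the ``only one $\theta_i\neq 0$'' regime is that $w$ is invariant under translations along $\Sigma$, so Theorem \ref{ZWTHM90} (or its proof, adapted to interior centers in the $\theta_1\neq 0$ case) transfers to every point of $\Sigma$. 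An $A_p$-weighted Moser iteration based on Corollary \ref{CO901} gives the baseline $\|u\|_{L^\infty(B^{+}_{1/2})}\le C(\|\phi_0\|_{L^\infty(\partial''B^{+}_{1})}+\mathcal{H})=:CM$.

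With this in hand, the first substep is a \emph{pointwise H\"older estimate at every point of $\Sigma$}: for any $x^{*}\in\Sigma\cap\overline{B^{+}_{1/4}}$, the translated and rescaled Theorem \ref{ZWTHM90} together with the $L^\infty$-bound yields $|u(z)-u(x^{*})|\le CM\,|z-x^{*}|^\alpha$ for $z\in B_{1/8}(x^{*})\cap\overline{B^{+}_{1}}$. The second substep is an \emph{interior H\"older estimate off $\Sigma$}: for $x\in B^{+}_{1/8}$ with $d:=\mathrm{dist}(x,\Sigma)>0$ and $x^{*}\in\Sigma$ realizing $d$, on $B_{d/2}(x)$ the weight $w$ is comparable to the constant $w(x)$ by the translation invariance, so $u-u(x^{*})$ solves a standard $p$-Laplace type equation with bounded data (the hypothesis $m>(n+\sum_{i}\theta_{i})/p$ is used here so that $f_0,f_1$ survive the rescaling). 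The classical interior H\"older regularity of DiBenedetto/Tolksdorf, after rescaling to the unit ball, then produces
\begin{equation*}
[u]_{C^{\alpha_0}(B_{d/4}(x))}\le C\,d^{-\alpha_0}\,\|u-u(x^{*})\|_{L^\infty(B_{d/2}(x))}\le CM\,d^{\,\alpha-\alpha_0},
\end{equation*}
with $\alpha_0=\alpha_0(n,p)>0$ the unweighted $p$-Laplace H\"older exponent.

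The estimate \eqref{Z010} then follows from a standard dichotomy. Given $x,y\in B^{+}_{1/8}$, set $r=|x-y|$, $d=\min(d(x),d(y))$, WLOG $d=d(x)$, and pick $x^{*}\in\Sigma$ with $|x-x^{*}|=d$. If $d\le r^{1/(1+\alpha)}$, then both $|x-x^{*}|$ and $|y-x^{*}|\le d+r$ are bounded by $2\,r^{1/(1+\alpha)}$, so the triangle inequality and the pointwise estimate at $x^{*}$ give $|u(x)-u(y)|\le CM\,r^{\alpha/(1+\alpha)}$. Otherwise $r<d^{1+\alpha}\le d$, and the interior H\"older bound above gives
\begin{equation*}
|u(x)-u(y)|\le CM\,d^{\,\alpha-\alpha_0}r^{\alpha_0}=CM\,(r/d)^{\alpha_0-\tilde\alpha}\,d^{\,\alpha-\tilde\alpha}\,r^{\tilde\alpha}\le CM\,r^{\tilde\alpha}
\end{equation*}
for any $\tilde\alpha\le\min(\alpha_0,\alpha/(1+\alpha))$, after using $r/d\le 1$ and $d\le 1$ to absorb the remaining factors. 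Choosing any $0<\tilde\alpha<\alpha/(1+\alpha)$ (and $\tilde\alpha\le\alpha_0$) yields \eqref{Z010}.

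The principal obstacle is the pointwise H\"older estimate at $\Sigma$ when $\theta_1\neq 0$: Theorem \ref{ZWTHM90} is formulated at the origin with Dirichlet data on $\partial'B^{+}_{1}$, but $\Sigma=\{x'=0\}$ passes through the interior of $B^{+}_{1}$, so a genuine \emph{interior} pointwise estimate at an arbitrary point of the $x_n$-axis is needed. This is obtained by replaying the proof of Theorem \ref{ZWTHM90} using the translation invariance of $|x'|^{\theta_1}$ in the $x_n$-variable, and it is precisely where the hypothesis ``only one of the $\theta_i$ is nonzero'' becomes essential: with two or more nonzero $\theta_i$'s the weight loses this symmetry along $\Sigma$ and the local analysis at $x^{*}\neq 0$ cannot be reduced to the one already proved at the origin.
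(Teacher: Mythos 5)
Your proposal is correct and follows essentially the same route as the paper's proof: a pointwise H\"older estimate at the points of the singular set obtained by transporting Theorem \ref{ZWTHM90} along the translation symmetry of the weight (exactly where ``only one $\theta_{i}\neq0$'' enters), an interior H\"older estimate away from the singular set where $w$ is comparable to a constant (the paper realizes this by the rescaling $u_{R}(y)=u(Ry)$ with $R=|x'|$ and classical estimates for degenerate elliptic equations), and a dichotomy in the distance to the singular set. Your exponent $\min\{\alpha_{0},\alpha/(1+\alpha)\}$ is the analogue of the paper's $\alpha\beta/(\alpha+\beta)$ obtained by optimizing the splitting parameter $\lambda$, so the two arguments match in substance.
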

\begin{remark}
First, our results in Theorems \ref{ZWTHM90} and \ref{THM002} can be easily extended to general degenerate elliptic equations with anisotropic weights. Second, when we let $B^{+}_{R_{0}}$ substitute for $B^{+}_{1}$, it follows from the proofs with a slight modification that \eqref{QNAW001ZW} and \eqref{Z010} hold with $B^{+}_{1/2},B^{+}_{1/4}$ replaced by $B^{+}_{R_{0}/2}$ and $B^{+}_{R_{0}/4}$, respectively. Furthermore, the constant $C$ will be dependent on $R_{0}$, but the exponents $\alpha$ and $\tilde{\alpha}$ depend not on $R_{0}$. Additionally, it is worth emphasizing that Jin and Xiong \cite{JX2023} recently studied the H\"{o}lder regularity for the general non-homogeneous linearized fast diffusion equations and porous medium equations with the weight $x_{n}$, which also motivates our study on the local behavior for non-homogeneous $p$-Laplace equations with more general anisotropic weights in this section.

\end{remark}

In the following, we will adopt the classical De Giorgi trunction method created in \cite{D1957} to study the boundedness and local regularity for solution to problem \eqref{PROBLEM001}.

\subsection{Boundedness of weak solutions}
For simplicity, in the following we still denote
$$d\mu=wdx=|x'|^{\theta_{1}}|x|^{\theta_{2}}|x_{n}|^{\theta_{3}}dx.$$
For $E\subset B^{+}_{1}$, let $|E|_{\mu}=\int_{E}wdx$. For $k\in\mathbb{R}$ and $u\in W^{1,p}(B_{1}^{+},w)$, define
\begin{align*}
(u-k)_{+}=\max\{u-k,0\},\quad(u-k)_{-}=\max\{k-u,0\}.
\end{align*}
We start by establishing the Caccioppoli inequality for the truncated solution as follows.
\begin{lemma}\label{lem003}
Assume that $u$ is the solution of \eqref{PROBLEM001}. Let $B_{R}^{+}(x_{0})\subset B^{+}_{1}$. For any $\xi\in C^{\infty}(B_{1})$ satisfying that $0\leq\xi\leq1$ in $B_{1}$ and $\xi=0$ on $\overline{B_{1}^{+}}\setminus (B_{R}^{+}(x_{0})\cup\partial'B_{1}^{+})$, we derive
\begin{align*}
&\int_{B^{+}_{R}(x_{0})}|\nabla(v\xi)|^{p}wdx\leq C\bigg(\int_{B^{+}_{R}(x_{0})}|v\nabla\xi|^{p}w+\mathcal{H}^{p}|\{v>0\}\cap B^{+}_{R}(x_{0})|_{\mu}^{1-\frac{1}{m}}\bigg),
\end{align*}
where $v=(u-k)_{\pm}$, $k\in\mathbb{R}$, and $\mathcal{H}$ is given by \eqref{F09}.

\end{lemma}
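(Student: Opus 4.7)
The plan is the standard Caccioppoli argument, adapted to the weighted setting. I will test the weak formulation against $\pm v\xi^{p}$ (with the sign chosen according to whether $v=(u-k)_{+}$ or $v=(u-k)_{-}$); the function $v\xi^{p}$ lies in $W^{1,p}_{0}(B_{1}^{+},w)$ for the relevant truncation levels (namely $k\ge 0$ in the $v_{+}$ case and $k\le 0$ in the $v_{-}$ case, which is all that is ever used in De Giorgi iteration from the boundary value $u=0$ on $\partial'B_{1}^{+}$). Using $\nabla(v\xi^{p})=\xi^{p}\nabla v+p\,v\xi^{p-1}\nabla\xi$ and the fact that $|\nabla u|^{p-2}\nabla u\cdot\nabla v=|\nabla v|^{p}$ almost everywhere on $\{v>0\}$, the weak formulation gives
\begin{align*}
\int_{B^{+}_{R}(x_{0})}\xi^{p}|\nabla v|^{p}w\,dx
&\le p\int_{B^{+}_{R}(x_{0})}v\xi^{p-1}|\nabla v|^{p-1}|\nabla\xi|\,w\,dx\\
&\quad+\int_{B^{+}_{R}(x_{0})}\bigl(|f_{0}|+w|f_{1}|\bigr)v\xi^{p}\,dx.
\end{align*}
The first term on the right is absorbed via Young's inequality $ab\le\epsilon a^{p}+C_{\epsilon}b^{p/(p-1)}$ into $\epsilon\int\xi^{p}|\nabla v|^{p}w+C_{\epsilon}\int v^{p}|\nabla\xi|^{p}w$.

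The forcing term is the only delicate point. I would treat $\int wf_{1}\,v\xi^{p}$ and $\int f_{0}\,v\xi^{p}$ in parallel using H\"older with the sharp exponent $s=\tfrac{mp\chi}{m(\chi-1)+\chi(p-1)}$; in the $f_{0}$ piece write $f_{0}v\xi^{p}=\bigl(f_{0}w^{-(s-1)/s}\bigr)\bigl(v\xi^{p}w^{(s-1)/s}\bigr)$ and note that $\tau=s-1$ with $\tau=\tfrac{\chi(m-1)(p-1)+m}{m(\chi-1)+\chi(p-1)}$, so that the first factor produces exactly the norm $\|f_{0}\|_{L^{s}(w^{-\tau})}$ appearing in $\mathcal H$. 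In both cases one is left with $\|v\xi^{p}\|_{L^{s'}(w)}$, which, using $\xi^{p}\le \xi$ and H\"older applied on the set $\{v>0\}\cap B_{R}^{+}(x_{0})$ (of $\mu$-measure $M$), satisfies
\begin{equation*}
\|v\xi^{p}\|_{L^{s'}(w)}\le\|v\xi\|_{L^{p\chi}(w)}\,M^{1/s'-1/(p\chi)}.
\end{equation*}
The anisotropic weighted Sobolev embedding of Corollary \ref{CO901}, applied to $v\xi\in W^{1,p}_{0}(B_{R}(x_{0}),w)$, bounds $\|v\xi\|_{L^{p\chi}(w)}$ by $\|\nabla(v\xi)\|_{L^{p}(w)}$. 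A direct computation of exponents gives $\bigl(\tfrac{1}{s'}-\tfrac{1}{p\chi}\bigr)\cdot\tfrac{p}{p-1}=1-\tfrac{1}{m}$, which is precisely the exponent appearing in the statement; one more application of Young's inequality then produces
\begin{equation*}
\int(|f_{0}|+w|f_{1}|)v\xi^{p}\,dx\le\epsilon\int|\nabla(v\xi)|^{p}w+C_{\epsilon}\mathcal H^{p} M^{1-1/m}.
\end{equation*}

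Combining the two estimates, absorbing the $\epsilon$-terms on the left, and using the elementary pointwise bound $|\nabla(v\xi)|^{p}\le C(\xi^{p}|\nabla v|^{p}+v^{p}|\nabla\xi|^{p})$ yields the claimed Caccioppoli inequality. The main obstacle I anticipate is not the analytic content but the bookkeeping of exponents: one must verify that the integrability index chosen for $(f_{0},f_{1})$ and the Muckenhoupt exponent of the weight cooperate with the Sobolev exponent $p\chi$ so that the final power of the level-set measure is exactly $1-\tfrac{1}{m}$, and that the H\"older pairing $\tau=s-1$ matches the weight in the $f_{0}$-norm. Both identities fall out of the definitions of $\chi$ in \eqref{CHI} and $\mathcal H$ in \eqref{F09}, as sketched above.
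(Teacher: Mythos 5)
Your proof is correct and follows essentially the same plan as the paper's: test with $\pm v\xi^{p}$, absorb the cross term by Young's inequality, then handle the forcing integral via H\"{o}lder, the Sobolev embedding of Corollary \ref{CO901}, and Young again. The one cosmetic difference is the ordering of the two H\"{o}lder applications: the paper first pairs $v\xi$ against $(f_{0}+wf_{1})$ with exponents $p\chi$ and $(p\chi)'$ and only afterwards applies a second H\"{o}lder to convert the $(p\chi)'$-norm of the forcing on the level set into the $L^{s}$-norm appearing in $\mathcal{H}$ times $|A|_{\mu}^{1-1/m}$, whereas you pair directly with the exponent $s=\tfrac{mp\chi}{m(\chi-1)+\chi(p-1)}$ built into $\mathcal{H}$ and push the level-set factor onto $\|v\xi^{p}\|_{L^{s'}(w)}$ via the inclusion $L^{p\chi}(w,A)\hookrightarrow L^{s'}(w,A)$. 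Your exponent bookkeeping ($s-1=\tau$, $(\tfrac{1}{s'}-\tfrac{1}{p\chi})\tfrac{p}{p-1}=1-\tfrac{1}{m}$) checks out. You also correctly flag the admissibility subtlety, that $v\xi^{p}\in W^{1,p}_{0}(B_{1}^{+},w)$ requires $k\ge 0$ for $v=(u-k)_{+}$ (resp.\ $k\le 0$ for $v=(u-k)_{-}$) when $\xi$ is permitted to be nonzero on $\partial'B_{1}^{+}$; the paper states the lemma for all $k\in\mathbb{R}$ without comment, but only nonnegative levels are ever used in the subsequent iterations, so this is consistent.
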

\begin{remark}\label{RE06}
When $B_{R}^{+}(x_{0})$ is chosen to be $B_{1}^{+}$ and $v=(\pm u-k)^{+}$ with $k\geq\|\phi_{0}\|_{L^{\infty}(\partial'' B^{+}_{1})}$, Lemma \ref{lem003} holds for any $\xi\in C^{\infty}(B_{1})$ which may not vanish on $\partial'' B^{+}_{1}$, since $v=0$ on $\partial'' B^{+}_{1}$ in this case.
\end{remark}

\begin{proof}
By picking test function $\varphi=v\xi^{p}$, we have
\begin{align*}
\int_{B^{+}_{R}(x_{0})}w|\nabla u|^{p-2}\nabla u\cdot\nabla(v\xi^{p}) dx=\int_{B^{+}_{R}(x_{0})}(f_{0}+wf_{1})v\xi^{p}dx.
\end{align*}
Observe first from H\"{o}lder's inequality and Young's inequality that
\begin{align}\label{KM01}
&\int_{B^{+}_{R}(x_{0})}|\nabla u|^{p-2}\nabla u\nabla(v\xi^{p})w=\int_{B^{+}_{R}(x_{0})}|\nabla v|^{p-2}\nabla v\nabla(v\xi^{p})w\notag\\
&\geq\int_{B^{+}_{R}(x_{0})}|\nabla v|^{p}\xi^{p}w-\int_{B^{+}_{R}(x_{0})}pv\xi^{p-1}|\nabla\xi||\nabla v|^{p-1}w\notag\\
&\geq\int_{B^{+}_{R}(x_{0})}|\nabla v|^{p}\xi^{p}w-p\bigg(\int_{B^{+}_{R}(x_{0})}|\xi\nabla v|^{p}w\bigg)^{\frac{p-1}{p}}\bigg(\int_{B^{+}_{R}(x_{0})}|v\nabla\xi|^{p}w\bigg)^{\frac{1}{p}}\notag\\
&\geq\frac{1}{2}\int_{B^{+}_{R}(x_{0})}|\nabla v|^{p}\xi^{p}w-C(p)\int_{B^{+}_{R}(x_{0})}|v\nabla\xi|^{p}w.
\end{align}

Perform zero extension for $v$ in $B_{1}\setminus B_{1}^{+}$ and still write the extended function as $v$. For simplicity, denote $A(v,R):=B^{+}_{R}(x_{0})\cap\{v>0\}$. A consequence of H\"{o}lder's inequality, Young's inequality and Corollary \ref{CO901} leads to that
\begin{align}\label{KM02}
&\int_{B^{+}_{R}(x_{0})}(f_{0}+wf_{1})v\xi^{p}dx=\int_{A(v,R)}(f_{0}+wf_{1})v\xi^{p}dx\notag\\
&\leq\|v\xi\|_{L^{p\chi}(B_{R}(x_{0}),w)}\Big(\|f_{0}\|_{L^{\frac{p\chi}{p\chi-1}}(A(v,R),w^{-\frac{1}{p\chi-1}})}+\|f_{1}\|_{L^{\frac{p\chi}{p\chi-1}}(A(v,R),w)}\Big)\notag\\
&\leq C\|\nabla(v\xi)\|_{L^{p}(B_{R}(x_{0}),w)}\Big(\|f_{0}\|_{L^{\frac{p\chi}{p\chi-1}}(A(v,R),w^{-\frac{1}{p\chi-1}})}+\|f_{1}\|_{L^{\frac{p\chi}{p\chi-1}}(A(v,R),w)}\Big)\notag\\
&\leq\frac{1}{3}\|\nabla(v\xi)\|_{L^{p}(B^{+}_{R}(x_{0}),w)}^{p}+C\Big(\|f_{0}\|^{\frac{p}{p-1}}_{L^{\frac{p\chi}{p\chi-1}}(A(v,R),w^{-\frac{1}{p\chi-1}})}+\|f_{1}\|^{\frac{p}{p-1}}_{L^{\frac{p\chi}{p\chi-1}}(A(v,R),w)}\Big)\notag\\
&\leq \frac{1}{3}\|\nabla(v\xi)\|_{L^{p}(B^{+}_{R}(x_{0}),w)}^{p}+C\mathcal{H}^{p}|A(v,R)|_{\mu}^{1-\frac{1}{m}},
\end{align}
where $\chi$ and $\mathcal{H}$ are defined by \eqref{CHI} and \eqref{F09}, respectively. Combining \eqref{KM01}--\eqref{KM02} and the fact that $|\nabla(v\xi)|^{p}\leq2^{p-1}(|\xi\nabla v|^{p}+|v\nabla\xi|^{p})$, we obtain that Lemma \ref{lem003} holds.

\end{proof}

Utilizing Lemma \ref{lem003} and Remark \ref{RE06}, we can obtain the following global $L^{\infty}$ estimate.
\begin{theorem}\label{CORO06}
Assume as above. Let $u$ be the solution of \eqref{PROBLEM001}. Then
\begin{align*}
\|u\|_{L^{\infty}(B_{1}^{+})}\leq\|\phi_{0}\|_{L^{\infty}(\partial'' B^{+}_{1})}+C\mathcal{H},
\end{align*}
where $C=C(n,p,q,m,\theta_{1},\theta_{2},\theta_{3})$ and $\mathcal{H}$ is defined by \eqref{F09}.
\end{theorem}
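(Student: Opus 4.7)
The plan is to run a De Giorgi level-set iteration on $u$ (and then on $-u$) built from two ingredients already at hand: the Caccioppoli inequality of Lemma \ref{lem003} applied on the whole half-ball via Remark \ref{RE06}, and the weighted Sobolev embedding of Corollary \ref{CO901}. First I would set $k_{*} := \|\phi_{0}\|_{L^{\infty}(\partial''B_{1}^{+})}$ and, for $k \geq k_{*}$, $v_{k} := (u-k)_{+}$. The boundary conditions force $v_{k} = 0$ on both $\partial' B_{1}^{+}$ (since $u=0$ there and $k\geq 0$) and $\partial'' B_{1}^{+}$ (since $u=\phi_{0}\leq k_{*}$ there), so Remark \ref{RE06} permits taking $\xi \equiv 1$ in Lemma \ref{lem003} on $B_{1}^{+}$, yielding
\begin{align*}
\int_{B_{1}^{+}} |\nabla v_{k}|^{p}\, w\, dx \leq C\mathcal{H}^{p}\, |A(k)|_{\mu}^{1-1/m}, \qquad A(k) := \{v_{k} > 0\} \cap B_{1}^{+}.
\end{align*}
Since $v_{k}$ vanishes on all of $\partial B_{1}^{+}$, its even reflection across $\{x_{n}=0\}$ lies in $W_{0}^{1,p}(B_{1},w)$ (the weight is even in $x_{n}$), so Corollary \ref{CO901} delivers
\begin{align*}
\|v_{k}\|_{L^{p\chi}(B_{1}^{+},w)}^{p\chi} \leq C\mathcal{H}^{p\chi}\, |A(k)|_{\mu}^{\chi(1-1/m)}.
\end{align*}

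Next, for $k_{*} \leq h < k$, Chebyshev's inequality applied to the pointwise bound $v_{h} \geq k-h$ on $A(k)$ gives
\begin{align*}
|A(k)|_{\mu} \leq (k-h)^{-p\chi}\, \|v_{h}\|_{L^{p\chi}(B_{1}^{+},w)}^{p\chi} \leq \frac{C\mathcal{H}^{p\chi}}{(k-h)^{p\chi}}\, |A(h)|_{\mu}^{\chi(1-1/m)}.
\end{align*}
I would then define $k_{j} := k_{*} + L(1 - 2^{-j})$ and $Y_{j} := |A(k_{j})|_{\mu}$, with $L>0$ to be chosen, so that the estimate iterates to
\begin{align*}
Y_{j+1} \leq C L^{-p\chi}\, \mathcal{H}^{p\chi}\, 2^{(j+1)p\chi}\, Y_{j}^{1+\sigma}, \qquad \sigma := \chi(1-1/m) - 1.
\end{align*}
The algebraic identity $\chi/(\chi-1) = (n+\sum_{i}\theta_{i})/p$ read off from \eqref{CHI}, together with the hypothesis $m > (n+\sum_{i}\theta_{i})/p$, gives $\sigma > 0$, and the standard geometric convergence lemma (e.g. Lemma 4.1 of Ch.\ II of DiBenedetto or Lemma 7.1 of Giaquinta-Martinazzi) then forces $Y_{j} \to 0$ provided $Y_{0}$ is bounded by an explicit negative power of $L/\mathcal{H}$. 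Since $Y_{0} \leq \mu(B_{1}^{+}) < \infty$ by Lemma \ref{MWQAZ090}, it suffices to take $L = C_{0}\mathcal{H}$ for $C_{0}$ depending only on $n,m,p,q,\theta_{1},\theta_{2},\theta_{3}$, whence $u \leq k_{*} + C\mathcal{H}$ a.e.\ in $B_{1}^{+}$.

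Running the identical argument with $u$ replaced by $-u$ (whose Dirichlet datum on $\partial''B_{1}^{+}$ is $-\phi_{0}$, still of $L^{\infty}$ norm $k_{*}$) yields the matching lower bound, completing the proof. The degenerate case $\mathcal{H} = 0$ is immediate from Caccioppoli, which forces $\nabla v_{k_{*}} \equiv 0$ and hence $v_{k_{*}} \equiv 0$ by connectedness together with the boundary vanishing. The principal obstacle worth double-checking is the exponent arithmetic: the hypothesis $m > (n+\sum_{i}\theta_{i})/p$ is \emph{exactly} what makes $\sigma > 0$, and without that the geometric iteration fails to close. Apart from this bookkeeping and the correct invocation of Remark \ref{RE06} to allow a constant cutoff on the whole half-ball, the argument is the classical De Giorgi scheme transplanted to the anisotropic weighted setting.
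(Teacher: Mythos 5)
Your proposal is correct and takes essentially the same route as the paper's proof: a De Giorgi level-set iteration starting at $k_*=\|\phi_0\|_{L^\infty(\partial'' B_1^+)}$, using Remark \ref{RE06} to apply the Caccioppoli inequality of Lemma \ref{lem003} with $\xi\equiv1$, extending the truncation into $B_1$ so as to invoke Corollary \ref{CO901}, and closing the recursion with a level increment of order $\mathcal{H}$. The minor differences (even reflection rather than the paper's zero extension; writing the recursion exponent as $1+\sigma$ with $\sigma=\chi(1-1/m)-1$ rather than as $\chi(m-1)/m$) are cosmetic, and both formulations hinge on exactly the hypothesis $m>(n+\sum_{i=1}^{3}\theta_i)/p$.
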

\begin{proof}
Take $k_{i}=\|\phi_{0}\|_{L^{\infty}(\partial'' B^{+}_{1})}+\mathcal{M}-\frac{\mathcal{M}}{2^{i}}$, $i\geq0$ and $\xi\equiv1$ in Lemma \ref{lem003}, where $\mathcal{M}$ is determined in the following. Extend $u$ to be identically zero in $B_{1}\setminus B_{1}^{+}$ and still denote the extended function as $u$. A direct application of Corollary \ref{CO901} and Remark \ref{RE06} reads that
\begin{align*}
&(k_{i+1}-k_{i})^{p}|\{u>k_{i+1}\}\cap B^{+}_{1}|_{\mu}^{\frac{1}{\chi}}\leq\bigg(\int_{B_{1}}|(u-k_{i})_{+}|^{p\chi}wdx\bigg)^{\frac{1}{\chi}}\notag\\
&\leq C\int_{B_{1}}|\nabla(u-k_{i})_{+}|^{p}wdx\leq C\mathcal{H}^{p}|\{u>k_{i}\}\cap B^{+}_{1}|_{\mu}^{1-\frac{1}{m}},
\end{align*}
where $\chi$ and $\mathcal{H}$ are, respectively, given by \eqref{CHI} and \eqref{F09}. Denote
$F_{i}:=|\{u>k_{i}\}\cap B^{+}_{1}|_{\mu}.$
Then we have
\begin{align*}
F_{i+1}\leq&\left(\frac{C\mathcal{H}^{p}2^{(i+1)p}}{\mathcal{M}^{p}}\right)^{\chi}F_{i}^{\frac{\chi(m-1)}{m}}\notag\\
=&\prod_{s=0}^{i}\left[\left(\frac{C\mathcal{H}^{p}2^{(i+1-s)p}}{\mathcal{M}^{p}}\right)^{\frac{m}{m-1}}\right]^{\big(\frac{\chi(m-1)}{m}\big)^{s+1}}F_{0}^{\big(\frac{\chi(m-1)}{m}\big)^{i+1}}\notag\\
\leq&\left[\left(\frac{C_{0}^{p}\mathcal{H}^{p}}{\mathcal{M}^{p}}\right)^{\frac{m\chi}{\chi(m-1)-m}}F_{0}\right]^{\big(\frac{\chi(m-1)}{m}\big)^{i+1}},
\end{align*}
where $\frac{\chi(m-1)}{m}>1$. Picking $\mathcal{M}=C_{0}\mathcal{H}(2|B^{+}_{1}|_{\mu})^{\frac{\chi(m-1)-m}{mp\chi}}$, we deduce
\begin{align*}
F_{i+1}\leq2^{-\big(\frac{\chi(m-1)}{m}\big)^{i+1}}\rightarrow0,\quad\text{as }i\rightarrow\infty,
\end{align*}
which implies that
\begin{align*}
\sup_{B^{+}_{1}}u\leq\|\phi_{0}\|_{L^{\infty}(\partial'' B^{+}_{1})}+C\mathcal{H}.
\end{align*}
Applying the above arguments to the equation of $-u$, we complete the proof.
\end{proof}

We now proceed to give the local $L^{\infty}$ estimate of the solution as follows.
\begin{theorem}\label{THE621}
Assume as above. Suppose that $u$ is the solution of \eqref{PROBLEM001}. Then we obtain that for any $\gamma>0$,
\begin{align*}
\|u\|_{L^{\infty}(B^{+}_{3/4})}\leq C(\|u\|_{L^{\gamma}(B^{+}_{1},w)}+\mathcal{H}),
\end{align*}
where $\mathcal{H}$ is given by \eqref{F09} and $C=C(n,p,q,m,\gamma,\theta_{1},\theta_{2},\theta_{3})$.
\end{theorem}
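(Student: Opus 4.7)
The plan is to adapt the De Giorgi iteration used in Theorem \ref{CORO06} to a family of shrinking concentric half-balls, and then to promote the resulting $L^p \to L^\infty$ bound to an $L^\gamma \to L^\infty$ bound for arbitrary $\gamma>0$ via interpolation and a classical hole-filling argument. The key analytic inputs are the Caccioppoli inequality from Lemma \ref{lem003}, the anisotropic weighted Sobolev embedding from Corollary \ref{CO901}, and a Chebyshev-type estimate on super-level sets.

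For the core step I would fix $3/4 \leq \sigma < R \leq 1$, set $r_j = \sigma + (R-\sigma)2^{-j}$ and $k_j = d(1 - 2^{-j})$ with $d>0$ to be chosen, and put $V_j = (u - k_j)_+$, $A_j = |\{u > k_j\} \cap B^+_{r_j}|_\mu$, $\phi_j = \int_{B^+_{r_j}} V_j^p\, d\mu$. Choosing cutoffs $\xi_j \in C_c^\infty(B_{r_j})$ with $\xi_j \equiv 1$ on $B_{r_{j+1}}$ and $|\nabla\xi_j| \lesssim 2^j/(R-\sigma)$, the product $V_j \xi_j$ lies in $W_0^{1,p}(B^+_{r_j}, w)$ after zero extension, since $u = 0$ on $\partial'B^+_1$ and $k_j \geq 0$ kill the trace on $\partial'B^+_{r_j}$ while $\xi_j$ kills it on $\partial''B^+_{r_j}$. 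Feeding the test function into the equation and combining Corollary \ref{CO901} with the Caccioppoli estimate gives
\begin{align*}
\left(\int_{B^+_{r_{j+1}}} V_j^{p\chi}\, d\mu\right)^{1/\chi} \leq C\left[\frac{4^j}{(R-\sigma)^p}\phi_j + \mathcal{H}^p A_j^{1-1/m}\right].
\end{align*}
Hölder's inequality with conjugates $(\chi, \chi/(\chi-1))$ on the set $\{u > k_{j+1}\}$ together with the Chebyshev bound $A_{j+1} \leq (d\cdot 2^{-j-1})^{-p}\phi_j$ then yields a recursion schematically of the form
\begin{align*}
\phi_{j+1} \leq C\,b^j\, d^{-p(1-1/\chi)}\bigl(\phi_j + \mathcal{H}^p A_j^{1-1/m}\bigr)\phi_j^{1-1/\chi}
\end{align*}
for a fixed geometric rate $b>1$. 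After passing to a combined quantity such as $Y_j = \phi_j/d^p + A_j/\mu(B^+_R)$ to absorb the residual $A_j$ factor, standard geometric-iteration lemmas force $\phi_j \to 0$ once $d$ is chosen of order $C(R-\sigma)^{-a}(\|u\|_{L^p(B^+_R, w)}+\mathcal{H})$, so $u \leq d$ on $B^+_\sigma$. Running the same argument on $-u$ delivers
\begin{align*}
\|u\|_{L^\infty(B^+_\sigma)} \leq C(R-\sigma)^{-a}\bigl(\|u\|_{L^p(B^+_R, w)} + \mathcal{H}\bigr), \qquad 3/4 \leq \sigma < R \leq 1.
\end{align*}

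To pass from $L^p$ to general $L^\gamma$, for $\gamma \geq p$ I would use Hölder together with $\mu(B^+_1) < \infty$, while for $0 < \gamma < p$ I would apply the interpolation $\|u\|_{L^p(B^+_R, w)}^p \leq \|u\|_{L^\infty(B^+_R)}^{p-\gamma}\|u\|_{L^\gamma(B^+_R, w)}^\gamma$ followed by Young's inequality with a parameter proportional to a suitable power of $R-\sigma$ to obtain
\begin{align*}
\|u\|_{L^\infty(B^+_\sigma)} \leq \tfrac{1}{2}\|u\|_{L^\infty(B^+_R)} + C(R-\sigma)^{-b}\|u\|_{L^\gamma(B^+_R, w)} + C(R-\sigma)^{-a}\mathcal{H}.
\end{align*}
A classical iteration lemma (see, e.g., Lemma 6.1 in Giusti's \emph{Direct methods in the calculus of variations}) then absorbs the leading $L^\infty(B^+_R)$ term on the right and produces the announced estimate on $B^+_{3/4}$.

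The most delicate point I anticipate is engineering the De Giorgi recursion to be genuinely superlinear in a single driving quantity. The inhomogeneous contribution $\mathcal{H}^p A_j^{1-1/m}$ must be folded together with the Hölder factor $\phi_j^{1-1/\chi}$ and the Chebyshev control $A_{j+1} \lesssim d^{-p}\phi_j$ so that, after the change of variables, $Y_j$ satisfies an inequality of the form $Y_{j+1} \leq C b^j Y_j^{1+\delta}$ with $\delta > 0$. This is exactly where the hypothesis $m > (n + \sum_{i=1}^3\theta_i)/p$ is used: a direct calculation with $\chi = (n+\sum\theta_i)/(n+\sum\theta_i - p)$ shows it is equivalent to $\chi(1 - 1/m) > 1$, which is the sharp superlinearity threshold, and hence the correct weighted-dual integrability condition for the forcing in the anisotropic scale.
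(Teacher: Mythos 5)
Your proposal follows essentially the same route as the paper's: a De Giorgi iteration built from the Caccioppoli inequality (Lemma \ref{lem003}), the weighted Sobolev embedding (Corollary \ref{CO901}), and Chebyshev on super-level sets to produce a superlinear recursion yielding the $L^p\to L^\infty$ bound on nested half-balls, followed by interpolation, Young's inequality, and the Giaquinta--Giusti iteration lemma (Lemma~1.1 of \cite{GG1982}) to pass from $L^p$ to arbitrary $L^\gamma$. The only minor organizational differences are that the paper carries the recursion in the single scalar $F_i = k^{-p}\|(u-k_i)_+\|^p_{L^p(B^+_{r_i},w)}$, using Chebyshev to fold the level-set measures directly into powers of $F_i$ rather than tracking a combined quantity $Y_j$, and obtains the two-radius estimate by the rescaling $\tilde u(x)=u(Rx)$ of the initial $\tau$-to-$1$ estimate rather than by iterating directly over concentric half-balls $B^+_\sigma\subset B^+_R$.
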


\begin{proof}
For any given $\tau\in(0,1)$, set
\begin{align*}
r_{i}=\tau+2^{-i}(1-\tau),\quad k_{i}=k(2-2^{-i}),\quad i\geq0,
\end{align*}
where $k>0$ is determined in the following. Pick a nonnegative cut-off function $\xi_{i}\in C^{\infty}(\mathbb{R}^{n})$ satisfying that $\xi_{i}=1$ in $B^{+}_{r_{i+1}}$, $\xi=0$ in $\mathbb{R}^{n}\setminus B_{r_{i}}$, and $|\nabla\xi_{i}|\leq C(n)(r_{i}-r_{i+1})^{-1}$. Denote $A(k,r):=\{u>k\}\cap B^{+}_{r}$ and $|A(k,r)|_{\mu}:=\int_{A(k,r)}wdx$ for $k,r>0$. Extend $u$ to be identically zero in $B_{1}\setminus B_{1}^{+}$ and still write the extended function as $u$. It follows from H\"{o}lder's inequality, Corollary \ref{CO901} and Lemma \ref{lem003} that
\begin{align*}
&\|(u-k_{i+1})_{+}\|_{L^{p}(B^{+}_{r_{i+1}},w)}^{p}\notag\\
&\leq\|\xi_{i}(u-k_{i+1})_{+}\|_{L^{p\chi}(B_{r_{i}},w)}^{p}|A(k_{i+1},r_{i})|_{\mu}^{1-\frac{1}{\chi}}\notag\\
&\leq C\|\nabla(\xi_{i}(u-k_{i+1})_{+})\|_{L^{p}(B_{r_{i}},w)}^{p}|A(k_{i+1},r_{i})|_{\mu}^{1-\frac{1}{\chi}}\notag\\
&\leq C\left(\frac{2^{p(i+1)}}{(1-\tau)^{p}}\|(u-k_{i+1})_{+}\|_{L^{p}(B^{+}_{r_{i}},w)}^{p}+\mathcal{H}^{p}|A(k_{i+1},r_{i})|_{\mu}^{1-\frac{1}{m}}\right)|A(k_{i+1},r_{i})|_{\mu}^{1-\frac{1}{\chi}},
\end{align*}
where $\chi$ is given by \eqref{CHI}. Note that
\begin{align*}
\|(u-k_{i})_{+}\|^{p}_{L^{p}(B^{+}_{r_{i}},w)}\geq&(k_{i+1}-k_{i})^{p}|A(k_{i+1},r_{i})|_{\mu}.
\end{align*}
For $i\geq0$, denote $F_{i}:=k^{-p}\|(u-k_{i})_{+}\|_{L^{p}(B^{+}_{r_{i}},w)}^{p}$ and choose $k\geq\|u\|_{L^{p}(B^{+}_{1},w)}+\mathcal{H}$. It then follows that
\begin{align*}
F_{i+1}\leq&\frac{C2^{p(i+1)(2-\frac{1}{\chi})}}{(1-\tau)^{p}}F_{i}^{2-\frac{1}{\chi}}+C\mathcal{H}^{p}2^{(i+1)p(2-\frac{1}{\chi}-\frac{1}{m})}k^{-p}F_{i}^{2-\frac{1}{\chi}-\frac{1}{m}}\notag\\
\leq&\prod\limits_{s=0}^{i}\left(\frac{C2^{p(i+1-s)(2-\frac{1}{\chi})}}{(1-\tau)^{p}}\right)^{(2-\frac{1}{\chi}-\frac{1}{m})^{s}}F_{0}^{(2-\frac{1}{\chi}-\frac{1}{m})^{i+1}}\notag\\
\leq&\left(\frac{\overline{C}_{0}^{p}F_{0}}{(1-\tau)^{^{p(1-\frac{1}{\chi}-\frac{1}{m})}}}\right)^{(2-\frac{1}{\chi}-\frac{1}{m})^{i+1}},\quad\overline{C}_{0}=\overline{C}_{0}(n,p,m,\theta_{1},\theta_{2},\theta_{3}).
\end{align*}
By choosing $k=\frac{2\overline{C}_{0}}{(1-\tau)^{1-\frac{1}{\chi}-\frac{1}{m}}}\|u\|_{L^{p}(B^{+}_{1},w)}+\mathcal{H}$, we have
\begin{align*}
F_{i+1}\leq2^{-p(2-\frac{1}{\chi}-\frac{1}{m})^{i+1}}\rightarrow0,\quad\text{as } i\rightarrow\infty,
\end{align*}
which implies that
\begin{align*}
\sup_{B^{+}_{\tau}}u\leq2k\leq\frac{4\overline{C}_{0}}{(1-\tau)^{1-\frac{1}{\chi}-\frac{1}{m}}}\|u\|_{L^{p}(B^{+}_{1},w)}+2\mathcal{H}.
\end{align*}
Repeating the above arguments with $u$ replaced by $-u$, we deduce that for any $\tau\in(0,1),$
\begin{align}\label{WE806}
\|u\|_{L^{\infty}(B^{+}_{\tau})}\leq\frac{4\overline{C}_{0}}{(1-\tau)^{1-\frac{1}{\chi}-\frac{1}{m}}}\|u\|_{L^{p}(B^{+}_{1},w)}+2\mathcal{H}.
\end{align}

For $R\in(0,1]$, let $\tilde{u}(x)=u(Rx)$, $\tilde{f}_{0}(x)=f_{0}(Rx)$ and $\tilde{f}_{1}(x)=f_{1}(Rx)$. Then we have
\begin{align*}
-\mathrm{div}(w|\nabla\tilde{u}|^{p-2}\nabla\tilde{u})=R^{p-\sum^{3}_{i=1}\theta_{i}}\tilde{f}_{0}+R^{p}w\tilde{f}_{1},\quad\mathrm{in}\;B^{+}_{1}.
\end{align*}
Then applying \eqref{WE806} to $\tilde{u}$ and rescaling back to $u$, we deduce
\begin{align*}
\|u\|_{L^{\infty}(B^{+}_{\tau R})}\leq&\frac{4\overline{C}_{0}}{(1-\tau)^{1-\frac{1}{\chi}-\frac{1}{m}}R^{\frac{n+\sum^{3}_{i=1}\theta_{i}}{p}}}\|u\|_{L^{p}(B^{+}_{R},w)}+2R^{\frac{m_{1}p-n-\sum^{3}_{i=1}\theta_{i}}{m_{1}(p-1)}}\mathcal{H}\notag\\
\leq&\frac{4\overline{C}_{0}}{(1-\tau)^{1-\frac{1}{\chi}-\frac{1}{m}}R^{\frac{n+\sum^{3}_{i=1}\theta_{i}}{p}}}\|u\|_{L^{p}(B^{+}_{R},w)}+2\mathcal{H},
\end{align*}
where we also utilized the fact that $m>\frac{n+\sum^{3}_{i=1}\theta_{i}}{p}$. For any $\gamma>0$, it follows from H\"{o}lder's inequality that if $\gamma\geq p$,
\begin{align*}
\|u\|_{L^{p}(B^{+}_{R},w)}\leq
\begin{cases}
\|u\|_{L^{\gamma}(B^{+}_{R},w)},\quad\text{for }\gamma=p,\\
C\|u\|_{L^{\gamma}(B^{+}_{R},w_{2})},\quad\text{for }\gamma>p,
\end{cases}
\end{align*}
while, if $0<\gamma<p$, by choosing $\bar{\gamma}=\frac{\gamma}{p}$, we have from Young's inequality that
\begin{align*}
\|u\|_{L^{p}(B^{+}_{R},w)}\leq&\|u\|_{L^{\infty}(B^{+}_{R})}^{\frac{p-\bar{\gamma}}{p}}\left(\int_{B^{+}_{R}}|u|^{\bar{\gamma}}w\right)^{\frac{1}{p}}\leq C\|u\|_{L^{\infty}(B^{+}_{R})}^{\frac{p-\bar{\gamma}}{p}}\|u\|_{L^{\gamma}(B^{+}_{R},w)}^{\frac{\bar{\gamma}}{p}}\notag\\
\leq&\frac{(1-\tau)^{1-\frac{1}{\chi}-\frac{1}{m}}R^{\frac{n+\sum^{3}_{i=1}\theta_{i}}{p}}}{8\overline{C}_{0}}\|u\|_{L^{\infty}(B^{+}_{R})}\notag\\
&+\frac{C}{(1-\tau)^{\frac{(1-\frac{1}{\chi}-\frac{1}{m})(p-\bar{\gamma})}{\bar{\gamma}}}R^{\frac{(n+\sum^{3}_{i=1}\theta_{i})(p-\bar{\gamma})}{p\bar{\gamma}}}}\|u\|_{L^{\gamma}(B^{+}_{R},w)}.
\end{align*}
Combining these above facts, we obtain that for any $\gamma>0,$
\begin{align*}
\|u\|_{L^{\infty}(B^{+}_{\tau R})}\leq\frac{1}{2}\|u\|_{L^{\infty}(B^{+}_{R})}+\frac{C}{(R-\tau R)^{\beta}}\|u\|_{L^{\gamma}(B^{+}_{R},w)}+2\mathcal{H},
\end{align*}
where
\begin{align*}
\beta:=\frac{1}{\bar{\gamma}}\max\bigg\{p\Big(1-\frac{1}{\chi}-\frac{1}{m}\Big),n+\sum^{3}_{i=1}\theta_{i}\bigg\}.
\end{align*}
It then follows from Lemma 1.1 in \cite{GG1982} that for any $0<\tau<1$ and $\gamma>0$,
\begin{align*}
\|u\|_{L^{\infty}(B^{+}_{\tau})}\leq\frac{C}{(1-\tau)^{\beta}}\|u\|_{L^{\gamma}(B^{+}_{1},w)}+C\mathcal{H}.
\end{align*}
The proof is complete.

\end{proof}

\subsection{Local regularity for weak solutions}
To complete the proofs of Theorems \ref{ZWTHM90} and \ref{THM002}, the key is to establish the following two De Giorgi lemmms including the improvement on local oscillation of the solution in Lemma \ref{LEM0035ZZW} and the explicit decay estimates for the distribution function of the solution in Lemma \ref{lem005ZZW}.
\begin{lemma}\label{LEM0035ZZW}
Assume as in Theorems \ref{ZWTHM90} and \ref{THM002}. For $R\in(0,1)$, let $0\leq\sup\limits_{B_{R}^{+}}u\leq M\leq\|u\|_{L^{\infty}(B_{1}^{+})}$. Then there exists a small constant $0<\tau_{0}<1$, depending only on $n,m,p,q,\theta_{1},\theta_{2},\theta_{3}$, such that for $0\leq k<M$ and $0<\tau<\tau_{0}$, if
\begin{align}\label{E90}
\delta:=M-k\geq\mathcal{H}R^{1-\frac{n+\sum^{3}_{i=1}\theta_{i}}{mp}},
\end{align}
and
\begin{align}\label{ZWZ007ZZW}
|\{x\in B_{R}^{+}: u>M-\delta\}|_{\mu}\leq\tau|B_{R}^{+}|_{\mu},
\end{align}
then
\begin{align}\label{DZ001ZZW}
u\leq M-\frac{\delta}{2},\quad\mathrm{in}\; B^{+}_{R/2}.
\end{align}

\end{lemma}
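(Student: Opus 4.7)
The plan is to run a De Giorgi iteration along geometric sequences
\[
k_i = M - \frac{\delta}{2} - \frac{\delta}{2^{i+1}}, \qquad r_i = \frac{R}{2} + \frac{R}{2^{i+1}}, \qquad i \geq 0,
\]
tracking the normalized measures $F_i := |A_i|_\mu / |B_R^+|_\mu$, where $A_i := \{u > k_i\} \cap B_{r_i}^+$. Then $k_0 = M - \delta$, $k_i \to M - \delta/2$, $r_0 = R$, $r_i \to R/2$, hypothesis \eqref{ZWZ007ZZW} reads $F_0 \leq \tau$, and the target conclusion is equivalent to $F_\infty = 0$.

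Pick cut-offs $\xi_i \in C_c^\infty(B_{r_i})$ with $\xi_i \equiv 1$ on $B_{r_{i+1}}$ and $|\nabla \xi_i| \leq C\, 2^{i+1}/R$. Since $u$ vanishes on $\partial' B_1^+$ and $k_i \geq k \geq 0$, the product $(u-k_i)_+\,\xi_i$ extends by zero to an element of $W_0^{1,p}(B_{r_i}, w)$. Applying Lemma \ref{lem003} on $B_{r_i}^+$ and then Corollary \ref{CO901} yields the weighted Sobolev--Caccioppoli chain
\[
\| (u-k_i)_+\, \xi_i \|_{L^{p\chi}(B_{r_i}, w)}^{p} \leq C \biggl( \frac{2^{pi}}{R^p} \int_{A_i} (u-k_i)_+^{p}\, d\mu + \mathcal{H}^p |A_i|_\mu^{1-1/m} \biggr).
\]

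Combining this with the Chebyshev estimate $(k_{i+1}-k_i)^{p\chi}|A_{i+1}|_\mu \leq \int (u-k_i)_+^{p\chi}\xi_i^{p\chi}\, d\mu$, the pointwise bound $(u-k_i)_+ \leq \delta$ on $A_i$, and the volume estimate $|B_R^+|_\mu \sim R^{n+\sum^{3}_{i=1}\theta_{i}}$ from Lemma \ref{MWQAZ090}, the explicit $R$-factors cancel by virtue of the algebraic identity $(\chi-1)(n+\sum^{3}_{i=1}\theta_{i}) = p\chi$ (equivalent to the definition \eqref{CHI} of $\chi$), leaving the recursion
\[
F_{i+1} \leq C\, b^{i} \bigl( F_i + c_0\, 2^{-pi} F_i^{1-1/m} \bigr)^{\chi}, \qquad c_0 := \Bigl( \delta^{-1} \mathcal{H}\, R^{1 - \frac{n+\sum^{3}_{i=1}\theta_{i}}{mp}} \Bigr)^{p}.
\]
The threshold \eqref{E90} is precisely $c_0 \leq 1$, and the integrability hypothesis $m > (n+\sum^{3}_{i=1}\theta_{i})/p$ rewrites as $\nu := \chi(1-1/m) - 1 > 0$, so for $F_i \leq 1$ this simplifies to $F_{i+1} \leq C'\, b^{i} F_i^{1+\nu}$. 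A standard geometric iteration lemma then furnishes $\tau_0 = \tau_0(C', b, \nu) > 0$ such that $F_0 \leq \tau_0$ forces $F_i \to 0$, which is the claim \eqref{DZ001ZZW}.

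The main obstacle I anticipate is the bookkeeping that makes every power of $R$ cancel: the threshold \eqref{E90}, the definition \eqref{CHI} of $\chi$, and the exponent condition $m > (n+\sum^{3}_{i=1}\theta_{i})/p$ all feed into the single recursive estimate through the identity above, and verifying that each hypothesis is responsible for precisely the right piece is the delicate part. Beyond this scale analysis, the argument is a routine De Giorgi iteration in the spirit of the proof of Theorem \ref{THE621}.
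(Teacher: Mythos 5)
Your proposal is correct and follows essentially the same route as the paper: the identical level/radius sequences $k_i=M-\delta/2-\delta 2^{-(i+1)}$, $r_i=R/2+R2^{-(i+1)}$, the Caccioppoli inequality of Lemma \ref{lem003} combined with the weighted Sobolev inequality of Corollary \ref{CO901}, normalization by $|B_R^+|_\mu\sim R^{n+\sum\theta_i}$ so that the definition of $\chi$ cancels all powers of $R$, with \eqref{E90} absorbing the $\mathcal{H}$-term and $m>\frac{n+\sum_{i}\theta_i}{p}$ giving the superlinear exponent $\chi(1-1/m)>1$ that drives the fast geometric iteration. This matches the paper's proof step for step, so no further comment is needed.
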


\begin{proof}
For $\delta>0$ and $i=0,1,2,...,$ denote
\begin{align*}
r_{i}=\frac{R}{2}+\frac{R}{2^{i+1}},\quad k_{i}=M-\delta+\frac{\delta}{2}(1-2^{-i}).
\end{align*}
Pick a cut-off function $\xi_{i}\in C_{0}^{\infty}(B_{r_{i}})$ satisfying that $\xi_{i}=1\;\mathrm{in}\;B^{+}_{r_{i+1}}$, $0\leq\xi_{i}\leq1,$ and $|\nabla\xi_{i}|\leq C(r_{i}-r_{i+1})^{-1}$. For $k\in[0,M]$ and $r\in(0,R]$, let $v_{i}=(u-k_{i})_{+}$ and $A(k,r)=\{x\in B^{+}_{r}: u>k\}$. Carry out zero extension for $u$ in $B_{1}\setminus B_{1}^{+}$ and still use $u$ to represent the extended function. From Corollary \ref{CO901} and Lemma \ref{lem003}, we obtain
\begin{align*}
&(k_{i+1}-k_{i})^{p}|A(k_{i+1},r_{i+1})|_{\mu}^{\frac{1}{\chi}}\leq\bigg(\int_{B_{R}}|\xi_{i}v_{i}|^{p\chi}w\bigg)^{\frac{1}{\chi}}\leq C\int_{B_{R}}|\nabla(\xi_{i}v_{i})|^{p}w\notag\\
&\leq C\bigg(\int_{B_{R}^{+}}|\nabla\xi_{i}|v_{i}^{p}w+\mathcal{H}^{p}|A(k_{i},r_{i})|_{\mu}^{1-\frac{1}{m}}\bigg)\notag\\
&\leq C\left(\frac{(M-k_{i})^{p}}{(r_{i}-r_{i+1})^{p}}|A(k_{i},r_{i})|_{\mu}+\mathcal{H}^{p}|A(k_{i},r_{i})|_{\mu}^{1-\frac{1}{m}}\right),
\end{align*}
where $\chi$ and $\mathcal{H}$ are, respectively, defined by \eqref{CHI} and \eqref{F09}. In light of \eqref{E90}, we have
\begin{align*}
|A(k_{i+1},r_{i+1})|_{\mu}\leq& C\left(\frac{4^{(i+2)p}}{R^{p}}|A(k_{i},r_{i})|_{\mu}+\frac{2^{(i+2)p}\mathcal{H}^{p}}{\delta^{p}}|A(k_{i},r_{i})|^{1-\frac{1}{m}}_{\mu}\right)^{\chi}\notag\\
\leq&\left(\frac{C2^{(i+2)p}}{R^{p-\frac{n+\sum^{3}_{i=1}\theta_{i}}{m}}}|A(k_{i},r_{i})|^{1-\frac{1}{m}}_{\mu}\right)^{\chi}.
\end{align*}
Define
\begin{align*}
F_{i}:=\frac{|A(k_{i},r_{i})|_{\mu}}{|B^{+}_{R}|_{\mu}}.
\end{align*}
Then we have
\begin{align*}
F_{i+1}\leq&(C4^{(i+2)p})^{\chi} F_{i}^{\frac{\chi(m-1)}{m}}\leq\prod\limits^{i}_{s=0}\left[(C4^{p(i+2-s)})^{\chi}\right]^{\left(\frac{\chi(m-1)}{m}\right)^{s}}F_{0}^{\left(\frac{\chi(m-1)}{m}\right)^{i+1}}\notag\\
\leq&(C^{\ast}F_{0})^{\left(\frac{\chi(m-1)}{m}\right)^{i+1}}.
\end{align*}
Take $\tau_{0}=(C^{\ast})^{-1}$. Hence, it follows that if \eqref{ZWZ007ZZW} holds for any $0<\tau<\tau_{0}$, then $C^{\ast}F_{0}<1$ and $F_{i+1}\rightarrow0$, as $i\rightarrow\infty$. Therefore, \eqref{DZ001ZZW} holds.

\end{proof}

Before establishing the decaying estimates for the distribution function of the solution $u$, we first give the required anisotropic weighted isoperimetric inequalities. Similar to the proof of Proposition 2.10 in \cite{MZ2023}, we have from Lemma \ref{QWZM090} that
\begin{lemma}\label{prop002}
Let $n\geq2$ and $1<p<\infty$. If $(\theta_{1},\theta_{2},\theta_{3})\in[(\mathcal{A}\cup\mathcal{B})\cap(\mathcal{C}_{p}\cup\mathcal{D}_{p})]\cup\{\theta_{1}=\theta_{3}=0,\,\theta_{2}\geq n(p-1)\}$, then there exists some constant $1<\tilde{p}=\tilde{p}(n,p,\theta_{1},\theta_{2},\theta_{3})<p$ such that for any $R>0$, $l>k$ and $u\in W^{1,\tilde{p}}(B_{R},w)$,
\begin{align*}
&(l-k)^{\tilde{p}}\bigg(\int_{\{u\geq l\}\cap B_{R}}wdx\bigg)^{\tilde{p}}\int_{\{u\leq k\}\cap B_{R}}wdx\notag\\
&\leq C(n,p,\theta_{1},\theta_{2},\theta_{3})R^{\tilde{p}(n+\theta_{1}+\theta_{2}+\theta_{3}+1)}\int_{\{k<u<l\}\cap B_{R}}|\nabla u|^{\tilde{p}}wdx,
\end{align*}
and
\begin{align*}
&(l-k)^{\tilde{p}}\bigg(\int_{\{u\leq k\}\cap B_{R}}wdx\bigg)^{\tilde{p}}\int_{\{u\geq l\}\cap B_{R}}wdx\notag\\
&\leq C(n,p,\theta_{1},\theta_{2},\theta_{3})R^{\tilde{p}(n+\theta_{1}+\theta_{2}+\theta_{3}+1)}\int_{\{k<u<l\}\cap B_{R}}|\nabla u|^{\tilde{p}}wdx,
\end{align*}
where $w=|x'|^{\theta_{1}}|x|^{\theta_{2}}|x_{n}|^{\theta_{3}}.$
\end{lemma}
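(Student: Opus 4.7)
The plan is to deduce both inequalities from the anisotropic weighted Poincar\'{e} inequality \eqref{A065} by applying it to a suitable truncation of $u$, then to relate the resulting left-hand side to the measures $\mu(\{u\le k\}\cap B_R)$ and $\mu(\{u\ge l\}\cap B_R)$ via an elementary two-sided estimate on the $\mu$-average of the truncation.

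First I would fix $\tilde{p}=\tilde{p}(n,p,\theta_{1},\theta_{2},\theta_{3})\in(1,p)$ as supplied by Lemma \ref{QWZM090}(i), and introduce the truncation
\[
v:=\min\{\max\{u,k\},l\}-k,
\]
so that $0\le v\le l-k$ with $v\equiv 0$ on $\{u\le k\}\cap B_R$, $v\equiv l-k$ on $\{u\ge l\}\cap B_R$, and $|\nabla v|=|\nabla u|\chi_{\{k<u<l\}}$ almost everywhere. Since $u\in W^{1,\tilde p}(B_R,w)$, one has $v\in W^{1,\tilde p}(B_R,w)$, and \eqref{A065} applied on $B_R$ yields
\[
\int_{B_R}|v-v_{B_R}|^{\tilde p}\,d\mu\le C\,R^{\tilde p}\int_{\{k<u<l\}\cap B_R}|\nabla u|^{\tilde p}\,d\mu,
\]
where $v_{B_R}=\mu(B_R)^{-1}\int_{B_R}v\,d\mu$.

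Next I would bound the $\mu$-average $v_{B_R}$ from two sides. Writing $A:=\mu(\{u\le k\}\cap B_R)$ and $B:=\mu(\{u\ge l\}\cap B_R)$, and using $v\ge (l-k)\chi_{\{u\ge l\}}$ together with $v\le (l-k)\chi_{\{u>k\}}$, I get
\[
\frac{(l-k)B}{\mu(B_R)}\le v_{B_R}\le(l-k)\Bigl(1-\frac{A}{\mu(B_R)}\Bigr),
\]
hence $(l-k)-v_{B_R}\ge (l-k)A/\mu(B_R)$. Restricting the integral of $|v-v_{B_R}|^{\tilde p}$ to either $\{u\le k\}\cap B_R$ (where $v=0$) or $\{u\ge l\}\cap B_R$ (where $v=l-k$) then gives
\[
\int_{\{u\le k\}\cap B_R}\!\!v_{B_R}^{\tilde p}\,d\mu\ge \frac{(l-k)^{\tilde p}B^{\tilde p}}{\mu(B_R)^{\tilde p}}A,\qquad \int_{\{u\ge l\}\cap B_R}\!\!\big((l-k)-v_{B_R}\big)^{\tilde p}d\mu\ge \frac{(l-k)^{\tilde p}A^{\tilde p}}{\mu(B_R)^{\tilde p}}B.
\]
Either estimate, combined with the Poincar\'{e} bound above and the measure relation $\mu(B_R)\sim R^{n+\theta_1+\theta_2+\theta_3}$ from Lemma \ref{MWQAZ090}, produces
\[
(l-k)^{\tilde p}\max\{B^{\tilde p}A,\,A^{\tilde p}B\}\le C\,R^{\tilde p(n+\theta_1+\theta_2+\theta_3+1)}\int_{\{k<u<l\}\cap B_R}|\nabla u|^{\tilde p}\,d\mu,
\]
which contains both asserted inequalities simultaneously.

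The essentially technical point is ensuring that Lemma \ref{QWZM090}(i) really applies with a single $\tilde p\in(1,p)$ covering every $(\theta_1,\theta_2,\theta_3)$ in the stated range, including the extra regime $\{\theta_1=\theta_3=0,\,\theta_2\ge n(p-1)\}$; this is exactly the content of that lemma (via Corollary 15.35 in \cite{HKM2006} for the quasiconformal case and Theorems 15.13, 15.21 in \cite{HKM2006} together with Theorem \ref{THM000Z} in the $A_p$ case), so no new work is needed. There is no serious obstacle once the truncation is in place; the only care required is the bookkeeping of $\mu(B_R)$ powers to obtain the exponent $\tilde p(n+\theta_1+\theta_2+\theta_3+1)$ on $R$, and verifying $v\in W^{1,\tilde p}(B_R,w)$, both of which are routine.
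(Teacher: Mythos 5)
Your proposal is correct and follows essentially the same route as the paper: the paper obtains this lemma from the weighted Poincar\'{e} inequality \eqref{A065} of Lemma \ref{QWZM090}(i) (following the truncation argument of Proposition 2.10 in \cite{MZ2023}), which is exactly your derivation via $v=\min\{\max\{u,k\},l\}-k$, the two-sided bound on $v_{B_R}$, and $\mu(B_R)\sim R^{n+\theta_{1}+\theta_{2}+\theta_{3}}$ from Lemma \ref{MWQAZ090}.
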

\begin{remark}
If $(\theta_{1},\theta_{2},\theta_{3})\in\mathcal{F}_{p_{0}}\cup\mathcal{G}_{p_{0}}$, applying the proof of Proposition 2.10 in \cite{MZ2023} again, it follows from \eqref{A095} that for any $R>0$, $l>k$ and $u\in W^{1,1}(B_{R})$,
\begin{align*}
&(l-k)\int_{\{u\geq l\}\cap B_{R}}wdx\int_{\{u\leq k\}\cap B_{R}}wdx\notag\\
&\leq C(n,p_{0},\theta_{1},\theta_{2},\theta_{3})R^{n+2(\theta_{1}+\theta_{2}+\theta_{3})+1}\int_{\{k<u<l\}\cap B_{R}}|\nabla u|dx.
\end{align*}
This type of anisotropic weighted isoperimetric inequality can simplify the computations in solving the similar decaying estimates in the next lemma. We leave it to interested readers.
\end{remark}

Using Lemma \ref{prop002}, we now present the desired decaying estimates as follows.
\begin{lemma}\label{lem005ZZW}
Assume as in Theorems \ref{ZWTHM90} and \ref{THM002}. For $R\in(0,1/2)$, let $0<\gamma<1$ and $0\leq\sup\limits_{B^{+}_{2R}}u\leq M\leq\|u\|_{L^{\infty}(B^{+}_{1})}$. Then there exists some constant $1<\tilde{p}=\tilde{p}(n,p,\theta_{1},\theta_{2},\theta_{3})<p$ such that for every $j\geq0$, we have either
\begin{align}\label{VD008}
M\leq2^{j}\mathcal{H}R^{1-\frac{n+\sum^{3}_{i=1}\theta_{i}}{mp}},
\end{align}
or
\begin{align}\label{DEC001ZZW}
\frac{|\{x\in B^{+}_{R}:u>M-\frac{M}{2^{j}}\}|_{\mu}}{|B^{+}_{R}|_{\mu}}\leq\frac{C_{0}}{j^{\frac{p-\tilde{p}}{p\tilde{p}}}},
\end{align}
where $C_{0}=C_{0}(n,m,p,\theta_{1},\theta_{2},\theta_{3})$.

\end{lemma}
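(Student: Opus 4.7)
The approach is a De Giorgi iteration on the super-level sets of $u$, where the iterative step is driven by the anisotropic weighted isoperimetric inequality of Lemma \ref{prop002} (not the Sobolev embedding of Corollary \ref{CO901}, which would yield only exponential decay in $j$). For $j \geq 0$ define $k_j = M - M/2^j$, $A_j = \{x \in B_R^+ : u > k_j\}$, and $D_j = \{k_j < u < k_{j+1}\} \cap B_R$. Extend $u$ by zero across $\partial' B_1^+$ using the Dirichlet condition; the extension lies in $W^{1,\tilde p}(B_R,w)$, equals $0 < k_j$ on $B_R^-$, and by the symmetry of $w$ in $x_n$ satisfies $|\{u \leq k_j\}\cap B_R|_\mu \geq \mu(B_R^-) = \tfrac{1}{2}\mu(B_R) \sim R^{n+\sum^{3}_{i=1}\theta_i}$ (Lemma \ref{MWQAZ090}); correspondingly $\{u \geq k_{j+1}\}\cap B_R \subset B_R^+$.

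I would then apply Lemma \ref{prop002} with $k=k_j$ and $l=k_{j+1}$, so that $l-k = M/2^{j+1}$, to control $|A_{j+1}|_\mu^{\tilde p}$ by $\int_{D_j}|\nabla u|^{\tilde p}w\,dx$ times the factor $R^{\tilde p(n+\sum^{3}_{i=1}\theta_i + 1) - (n+\sum^{3}_{i=1}\theta_i)}\, 2^{(j+1)\tilde p}M^{-\tilde p}$. Splitting the gradient integral by H\"older with exponents $p/\tilde p$ and $p/(p-\tilde p)$ gives
\begin{align*}
\int_{D_j}|\nabla u|^{\tilde p}w\,dx \leq \bigg(\int_{D_j}|\nabla u|^pw\,dx\bigg)^{\tilde p/p}|D_j|_\mu^{(p-\tilde p)/p},
\end{align*}
and the $L^p$ gradient integral is estimated by the Caccioppoli inequality of Lemma \ref{lem003} on $B_{2R}^+$, with cutoff $\xi$ equal to $1$ on $B_R$, vanishing outside $B_{2R}$, and satisfying $|\nabla\xi|\leq C/R$; using $(u-k_j)_+ \leq M/2^j$ on $B_{2R}^+$ and $|\{u > k_j\} \cap B_{2R}^+|_\mu \leq CR^{n+\sum^{3}_{i=1}\theta_i}$ yields
\begin{align*}
\int_{B_R^+}|\nabla(u-k_j)_+|^pw\,dx \leq C\bigg(\frac{M^p R^{n+\sum^{3}_{i=1}\theta_i}}{R^p 2^{jp}} + \mathcal{H}^p R^{(n+\sum^{3}_{i=1}\theta_i)(1-1/m)}\bigg).
\end{align*}
Under the alternative to \eqref{VD008}, i.e.\ $M > 2^j\mathcal{H}R^{1-(n+\sum^{3}_{i=1}\theta_i)/(mp)}$, a direct substitution shows that the $\mathcal{H}^p$-term is dominated by the first, and the whole right-hand side collapses to $CM^pR^{(n+\sum^{3}_{i=1}\theta_i)-p}/2^{jp}$.

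Collecting everything with $a_j := |A_j|_\mu/|B_R|_\mu$ and $|D_j|_\mu \leq (a_j-a_{j+1})|B_R|_\mu$, every power of $M$, $R$ and $2^j$ cancels, leaving the scale-invariant iterative inequality
\begin{align*}
a_{j+1}^{\tilde p} \leq C(a_j - a_{j+1})^{(p-\tilde p)/p}.
\end{align*}
Raising to the power $p/(p-\tilde p)$ and setting $b_j := a_j^{\tilde p p/(p-\tilde p)}$ gives $b_{j+1} \leq C(a_j-a_{j+1})$; since $a_j$ is monotone decreasing with $a_0\leq 1$, so is $b_j$, and summing telescopically from $j=0$ to $N-1$ yields $N b_N \leq \sum_{j=1}^{N} b_j \leq C(a_0-a_N) \leq C$, so $a_N \leq C N^{-(p-\tilde p)/(p\tilde p)}$, which is \eqref{DEC001ZZW}. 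The principal technical obstacle is the exponent bookkeeping: the $R$-powers arising from the isoperimetric inequality, from Caccioppoli-plus-the-dichotomy, from the H\"older measure factor, and from the $|B_R|_\mu^{\tilde p}$ normalization must all conspire to cancel; the hypothesis $m > (n+\sum^{3}_{i=1}\theta_i)/p$ is precisely what forces the $R$-exponent in \eqref{VD008} to be positive and, equivalently, what places both terms of the Caccioppoli bound on the same scale so that the $\mathcal{H}$-term can be absorbed.
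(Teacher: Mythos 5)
Your proof is correct and follows essentially the same route as the paper: zero extension, Lemma~\ref{prop002} with the lower bound $|\{u\leq k_j\}\cap B_R|_\mu\geq\mu(B_R^-)\sim R^{n+\sum\theta_i}$, H\"older with exponents $p/\tilde p$ and $p/(p-\tilde p)$, Caccioppoli on $B_{2R}^+$, absorption of the $\mathcal{H}$-term under the negation of \eqref{VD008}, and a telescoping sum. One small caveat: the hypothesis $m>(n+\sum\theta_i)/p$ is not what makes the absorption work (the dichotomy does that regardless of the sign of the $R$-exponent); its positivity is instead what is used later in Theorem~\ref{ZWTHM90} to make $\beta_2>0$.
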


\begin{proof}
Perform zero extension for $u$ in $B_{1}\setminus B_{1}^{+}$ and still write the extended function as $u$. For $i\geq0$, set $k_{i}=M-\frac{M}{2^{i}}$ and $A(k_{i},R)=B_{R}\cap\{u>k_{i}\}.$ Using Lemma \ref{prop002}, we see that for some constant $1<\tilde{p}=\tilde{p}(n,p,\theta_{1},\theta_{2},\theta_{3})<p$,
\begin{align}\label{WMZ001ZZW}
&(k_{i+1}-k_{i})^{\tilde{p}}|A(k_{i+1},R)|_{\mu}^{\tilde{p}}|B_{R}\setminus A(k_{i},R)|_{\mu}\notag\\
&\leq CR^{\tilde{p}(n+\sum^{3}_{i=1}\theta_{i}+1)}\int_{A(k_{i},R)\setminus A(k_{i+1},R)}|\nabla u|^{\tilde{p}}wdx.
\end{align}
Note that
\begin{align*}
|B_{R}\setminus A(k_{i},R)|_{\mu}\geq|B_{R}\setminus B^{+}_{R}|_{\mu}\geq C R^{n+\sum^{3}_{i=1}\theta_{i}}.
\end{align*}
Substituting this into \eqref{WMZ001ZZW}, we derive
\begin{align*}
&|A(k_{i+1},R)|_{\mu}\leq\frac{C2^{i+1}}{M}R^{\frac{(n+\theta_{1}+\theta_{2}+\theta_{3})(\tilde{p}-1)}{\tilde{p}}+1}\bigg(\int_{A(k_{i},R)\setminus A(k_{i+1},R)}|\nabla u|^{\tilde{p}}wdx\bigg)^{\frac{1}{\tilde{p}}}.
\end{align*}
In view of $1<\tilde{p}<p$, it follows from H\"{o}lder's inequality that
\begin{align*}
&\int_{A(k_{i},R)\setminus A(k_{i+1},R)}|\nabla u|^{\tilde{p}}wdx\notag\\
&\leq\bigg(\int_{A(k_{i},R)\setminus A(k_{i+1},R)}|\nabla u|^{\tilde{p}}wdx\bigg)^{\frac{\tilde{p}}{p}}\bigg(\int_{A(k_{i},R)\setminus A(k_{i+1},R)}wdx\bigg)^{\frac{p-\tilde{p}}{p}}\notag\\
&\leq\bigg(\int_{B^{+}_{R}}|\nabla(u-k_{i})_{+}|^{p}wdx\bigg)^{\frac{\tilde{p}}{p}}|A(k_{i},R)\setminus A(k_{i+1},R)|_{\mu}^{\frac{p-\tilde{p}}{p}}.
\end{align*}
Choose a cut-off function $\xi\in C_{0}^{\infty}(B_{2R})$ satisfying that
\begin{align*}
\xi=1\;\mathrm{in}\;B_{R},\quad0\leq\xi\leq1,\;|\nabla\xi|\leq\frac{C(n)}{R}\;\,\mathrm{in}\;B_{2R}.
\end{align*}
A consequence of Lemma \ref{lem003} shows that
\begin{align*}
&\bigg(\int_{B^{+}_{R}}|\nabla (u-k_{i})_{+}|^{p}wdx\bigg)^{\frac{1}{p}}\notag\\
&\leq C\bigg(\int_{B^{+}_{2R}}|(u-k_{i})_{+}|^{p}|\nabla\xi|^{p}wdx+\mathcal{H}^{p}|A(k_{i},2R)|_{\mu}^{1-\frac{1}{m}}\bigg)^{\frac{1}{p}}\notag\\
&\leq C\bigg(\frac{M^{p}}{2^{ip}}R^{n+\sum^{3}_{i=1}\theta_{i}-p}+\mathcal{H}^{p}R^{(n+\sum^{3}_{i=1}\theta_{i})(1-\frac{1}{m})}\bigg).
\end{align*}
If \eqref{VD008} fails for some $j$, it then follows that for $i\leq j$,
\begin{align*}
\bigg(\int_{B^{+}_{R}}|\nabla (u-k_{i})_{+}|^{p}wdx\bigg)^{\frac{1}{p}}\leq\frac{CM}{2^{i}}R^{\frac{n+\sum^{3}_{i=1}\theta_{i}-p}{p}}.
\end{align*}
We have from these above facts that
\begin{align*}
|A(k_{i+1},R)|_{\mu}\leq&CR^{(n+\sum^{3}_{i=1}\theta_{i})(1-\frac{p-\tilde{p}}{p\tilde{p}})}|A(k_{i},R)\setminus A(k_{i+1},R)|_{\mu}^{\frac{p-\tilde{p}}{p\tilde{p}}}.
\end{align*}
This leads to that for $j\geq1$,
\begin{align*}
j|A(k_{j},R)|_{\mu}^{\frac{p\tilde{p}}{p-\tilde{p}}}\leq&\sum^{j-1}_{i=0}|A(k_{i+1},R)|_{\mu}^{\frac{p\tilde{p}}{p-\tilde{p}}}\notag\\
\leq& CR^{(n+\sum^{3}_{i=1}\theta_{i})(\frac{p\tilde{p}}{p-\tilde{p}}-1)}|B_{R}|_{\mu}\leq C|B_{R}|_{\mu}^{\frac{p\tilde{p}}{p-\tilde{p}}}.
\end{align*}
Then \eqref{DEC001ZZW} is proved.

\end{proof}

We are now ready to prove Theorems \ref{ZWTHM90} and \ref{THM002} in turn.
\begin{proof}[Proof of Theorem \ref{ZWTHM90}]
For $0<R\leq\frac{1}{2}$, denote
\begin{align*}
\overline{\mu}(R)=\sup_{B^{+}_{R}}u,\quad\underline{\mu}(R)=\inf_{B^{+}_{R}}u,\quad \omega(R)=\overline{\mu}(R)-\underline{\mu}(R).
\end{align*}
Take a sufficiently large integer $j_{0}$ such that $C_{0}j_{0}^{-\frac{p-\tilde{p}}{p\tilde{p}}}\leq\tau_{0},$ where $\tau_{0}$ and $C_{0}$ are, respectively, given by Lemma \ref{LEM0035ZZW} and \eqref{DEC001ZZW}. It then follows from Lemmas \ref{LEM0035ZZW} and \ref{lem005ZZW} that there holds
\begin{align*}
\mathrm{either}\;\;\overline{\mu}(R)\leq2^{j_{0}}\mathcal{H}R^{1-\frac{n+\sum^{3}_{i=1}\theta_{i}}{mp}},\quad\mathrm{or}\;\;\overline{\mu}(R/4)\leq\overline{\mu}(R)-\frac{\overline{\mu}(R)}{2^{j_{0}+1}}.
\end{align*}
Using the above arguments for $-u$, we obtain
\begin{align*}
\mathrm{either}\;\;\underline{\mu}(R)\geq-2^{j_{0}}\mathcal{H}R^{1-\frac{n+\sum^{3}_{i=1}\theta_{i}}{mp}},\quad\mathrm{or}\;\;\underline{\mu}(R/4)\geq\underline{\mu}(R)-\frac{\underline{\mu}(R)}{2^{j_{0}+1}}.
\end{align*}
In either case, we all have
\begin{align*}
\omega(R/4)\leq(1-2^{-j_{0}-1})\omega(R)+2^{j_{0}+1}\mathcal{H}R^{1-\frac{n+\sum^{3}_{i=1}\theta_{i}}{mp}}=\frac{1}{4^{\beta_{1}}}\omega(R)+2^{j_{0}+1}\mathcal{H}R^{\beta_{2}},
\end{align*}
where $\beta_{1}=-\frac{\ln(1-2^{-j_{0}-1})}{\ln4}$ and $\beta_{2}=1-\frac{n+\sum^{3}_{i=1}\theta_{i}}{mp}$. For every $0<R\leq\frac{1}{2}$, there exists an integer $k\geq1$ such that $4^{-(k+1)}\cdot2^{-1}<R\leq4^{-k}\cdot2^{-1}$. Due to the fact that $\omega(R)$ is nondecreasing in $R$, it follows from Theorem \ref{THE621} that for any $\gamma>0$,
\begin{align*}
\omega(R)\leq&4^{-k\beta_{1}}\omega(2^{-1})+2^{j_{0}+1-\beta_{2}}4^{(1-k)\beta_{2}}\mathcal{H}\sum^{k-1}_{i=0}4^{i(\beta_{2}-\beta_{1})}\notag\\
\leq& C(\|u\|_{L^{\gamma}(B^{+}_{1},w)}+\mathcal{H})R^{\min\{\beta_{1},\beta_{2}\}},
\end{align*}
where $C=C(n,m,p,q,\gamma,\theta_{1},\theta_{2},\theta_{3}).$

\end{proof}

\begin{proof}[Proof of Theorem \ref{THM002}]
Take the proof corresponding to the case of $\theta_{1}\neq0$ and $\theta_{2}=\theta_{3}=0$ for example. The other two cases are the same and thus omitted. Applying the proof of Theorem \ref{ZWTHM90} with a slight modification, we obtain that there exists two constants $0<\alpha<1$ and $C>1$, depending only on $n,m,p,q,\theta_{1},$ such that for any fixed $\bar{x}\in\{(0',\bar{x}_{n}):0\leq\bar{x}_{n}\leq1/8\}$,
\begin{align}\label{QM916}
|u(x)-u(\bar{x})|\leq C(\|\phi_{0}\|_{L^{\infty}(\partial''B_{1}^{+})}+\mathcal{H})|x|^{\alpha},\quad\text{for all }x\in B^{+}_{1/4}(\bar{x}).
\end{align}
For $R\in(0,1/8)$ and $y\in B^{+}_{1/R}$, let $u_{R}(y)=u(Ry)$, $f_{0,R}(y)=f_{0}(Ry)$ and $f_{1,R}(y)=f_{1}(Ry)$. Then $u_{R}$ solves
\begin{align*}
-\mathrm{div}(|y'|^{\theta_{1}}|\nabla u_{R}|^{p-2}\nabla u_{R})=R^{p-\theta_{1}}f_{0,R}+R^{p}|y'|^{\theta_{1}}f_{1,R},\quad\mathrm{in}\;B^{+}_{1/R}.
\end{align*}

For any given $x,\tilde{x}\in B^{+}_{1/8},$ assume without loss of generality that $|\tilde{x}'|\leq|x'|$. Let $R=|x'|$. Then using Theorem \ref{CORO06} and the interior H\"{o}lder estimates for degenerate elliptic equations, we deduce that there exist two constants $0<\beta<1$ and $C>1$, both depending only on $n,m,p,q,\theta_{1}$, such that for any fixed $\bar{y}\in B^{+}_{1/(4R)}\cap\{|y'|=1\}$,
\begin{align}\label{WAQA001}
|u_{R}(y)-u_{R}(\bar{y})|\leq C(\|\phi_{0}\|_{L^{\infty}(\partial''B_{1}^{+})}+\mathcal{H})|y-\bar{y}|^{\beta},\quad\text{for every }y\in B_{1/2}^{+}(\bar{y}).
\end{align}
Note that
\begin{align*}
|u(x)-u(\tilde{x})|\leq|u(x',x_{n})-u(x',\tilde{x}_{n})|+|u(x',\tilde{x}_{n})-u(\tilde{x}',\tilde{x}_{n})|.
\end{align*}
Set $\lambda>1$. If $|x_{n}-\tilde{x}_{n}|\leq R^{\lambda}$, it follows from \eqref{WAQA001} that
\begin{align*}
&|u(x',x_{n})-u(x',\tilde{x}_{n})|\leq\left|u_{R}(x'/R,x_{n}/R)-u_{R}(x'/R,\tilde{x}_{n}/R)\right|\notag\\
&\leq C(\|\phi_{0}\|_{L^{\infty}(\partial''B_{1}^{+})}+\mathcal{H})|(x_{n}-\tilde{x}_{n})/R|^{\beta}\notag\\
&\leq C(\|\phi_{0}\|_{L^{\infty}(\partial''B_{1}^{+})}+\mathcal{H})|x_{n}-\tilde{x}_{n}|^{\frac{(\lambda-1)\beta}{\lambda}}.
\end{align*}
By contrast, if $|x_{n}-\tilde{x}_{n}|>R^{\lambda}$, using \eqref{QM916}, we obtain
\begin{align*}
&|u(x',x_{n})-u(x',\tilde{x}_{n})|\notag\\
&\leq|u(x',x_{n})-u(0',x_{n})|+|u(0',x_{n})-u(0',\tilde{x}_{n})|+|u(0',\tilde{x}_{n})-u(x',\tilde{x}_{n})|\notag\\
&\leq C(\|\phi_{0}\|_{L^{\infty}(\partial''B_{1}^{+})}+\mathcal{H})(|x'|^{\alpha}+|x_{n}-\tilde{x}_{n}|^{\alpha})\notag\\
&\leq C(\|\phi_{0}\|_{L^{\infty}(\partial''B_{1}^{+})}+\mathcal{H})|x_{n}-\tilde{x}_{n}|^{\frac{\alpha}{\lambda}}.
\end{align*}
Similarly, if $|x'-\tilde{x}'|\leq R^{\lambda}$, we have from \eqref{WAQA001} that
\begin{align*}
&|u(x',\tilde{x}_{n})-u(\tilde{x}',\tilde{x}_{n})|=\left|u_{R}(x'/R,\tilde{x}_{n}/R)-u_{R}(\tilde{x}'/R,\tilde{x}_{n}/R)\right|\notag\\
&\leq C(\|\phi_{0}\|_{L^{\infty}(\partial''B_{1}^{+})}+\mathcal{H})|(x'-\tilde{x}')/R|^{\beta}\notag\\
&\leq C(\|\phi_{0}\|_{L^{\infty}(\partial''B_{1}^{+})}+\mathcal{H})|x'-\tilde{x}'|^{\frac{(\lambda-1)\beta}{\lambda}},
\end{align*}
while, if $|x'-\tilde{x}'|>R^{\lambda}$, it follows from \eqref{QM916} that
\begin{align*}
&|u(x',\tilde{x}_{n})-u(\tilde{x}',\tilde{x}_{n})|\leq|u(x',\tilde{x}_{n})-u(0',\tilde{x}_{n})|+|u(0',\tilde{x}_{n})-u(\tilde{x}',\tilde{x}_{n})|\notag\\
&\leq C(\|\phi_{0}\|_{L^{\infty}(\partial''B_{1}^{+})}+\mathcal{H})\big(R^{\alpha}+|\tilde{x}'|^{\alpha}\big)\leq C(\|\phi_{0}\|_{L^{\infty}(\partial''B_{1}^{+})}+\mathcal{H})|x'-\tilde{x}'|^{\frac{\alpha}{\lambda}}.
\end{align*}
Combining these above facts, we choose $\lambda=1+\frac{\alpha}{\beta}$ such that $\frac{(\lambda-1)\beta}{\lambda}=\frac{\alpha}{\lambda}$. In fact, this choice is best, since it achieves the maximum H\"{o}lder regularity exponent in virtue of the fact that $\frac{\lambda-1}{\lambda}$ is increasing in $\lambda$ and $\frac{1}{\lambda}$ is decreasing in $\lambda$. Therefore, we obtain that for any $x,\tilde{x}\in B_{1/8}^{+},$
\begin{align*}
|u(x)-u(\tilde{x})|\leq C(\|\phi_{0}\|_{L^{\infty}(\partial''B_{1}^{+})}+\mathcal{H})|x-\tilde{x}|^{\frac{\alpha\beta}{\alpha+\beta}},
\end{align*}
which implies that Theorem \ref{THM002} holds.

\end{proof}

\section{Further discussions and some open problems}\label{SEC05}

Let $\Omega$ be a bounded open set in $\mathbb{R}^{n}$. Denote $\Omega_{T}:=\Omega\times(-T,0]$, $T>0$. In this section, we mainly focus on the nonnegative solutions for doubly non-linear parabolic equations with anisotropic weights as follows:
\begin{align}\label{PROBLEM06}
\begin{cases}
w_{1}\partial_{t}u^{q}-\mathrm{div}(w_{2}|\nabla u|^{p-2}\nabla u)=0,& \mathrm{in}\;\Omega_{T},\\
u=0,&\mathrm{on}\;\partial\Omega\times(-T,0],\\
u(x,0)=\phi_{0}\geq0,&
\end{cases}
\end{align}
where $q>0,\,p>1$, $\phi_{0}\in L^{\infty}(\Omega)$, $w_{1}=|x'|^{\theta_{1}}|x|^{\theta_{2}}|x_{n}|^{\theta_{3}}$ and $w_{2}=|x'|^{\theta_{4}}|x|^{\theta_{5}}|x_{n}|^{\theta_{6}}$. A nonnegative function $u$ is said to be a weak solution of \eqref{PROBLEM06} if
$$u\in C([0,T];L^{q+1}(\Omega,w_{1}))\cap L^{p}(0,T;W_{0}^{1,p}(\Omega,w_{2}))$$
satisfies
\begin{align*}
\int_{\Omega}w_{1}u^{q}\varphi dx\Big|_{t_{1}}^{t_{2}}+\int_{t_{1}}^{t_{2}}\int_{\Omega}(-w_{1}u^{q}\partial_{t}\varphi+w_{2}|\nabla u|^{p-2}\nabla u\nabla\varphi)dxdt=0,
\end{align*}
for any $-T\leq t_{1}<t_{2}\leq0$ and $\varphi\in W^{1,q+1}(0,T;L^{q+1}(\Omega,w_{1}))\cap L^{p}(0,T;W_{0}^{1,p}(\Omega,w_{2}))$. The equation in \eqref{PROBLEM06} is frequently utilized to describe various diffusion phenomena occurring in heat flow, chemical concentration, gas-kinetics, plasmas and thin liquid film dynamics. The problem of precisely analyzing the diffusion processes of the flows is critical to their applications in industry. For more related physical interpretations and derivations of physical models, we refer to \cite{DK2007,V2007,BDGLS2023} and the references therein.

According to the diffusion mechanism of the flows, the equation in \eqref{PROBLEM06} can be classified into the following three types. If $q>p-1$, it is called fast diffusion equation and its solution will extinct in finite time. By contrast, the case of $q<p-1$ corresponds to slow diffusion equation and its solution always decays with power-function to the stable state. The equation in borderline case $q=p-1$ is called Trudinger's equation, which was originally observed by Trudinger \cite{T1968}. Its solution decays faster than slow diffusion equation and decreases exponentially in time variable. Moreover, there is a special feature for Trudinger's equation that it can be linked to extremals of Poincar\'{e} inequalities, see \cite{LL2022} for more details. As for the existence of weak solutions to problem \eqref{PROBLEM06}, see Theorem 1.7 in \cite{AL1983}, Theorem 1.3 in \cite{BDMS2018} and Theorem 5.1 in \cite{BDMS201802}. The uniqueness result can be seen in Corollary 4.17 of \cite{BDGLS2023}. With regard to the study on H\"{o}lder regularity of weak solutions to problem \eqref{PROBLEM06} without weights, the well-studied ranges contain the following two cases: one is that $q>p-1$ and $1<p\leq2$; another is that $q\leq p-1$ and $p\geq2$. Especially when the considered diffusion operator is linear, there has a long list of literature devoted to studying the regularity (see e.g. \cite{JX2019,JX2022,S1983,K1988,BV2010,DK2007,DK1992,DGV2012,DKV1991,JRX2023,BFR2017,BFV2018}) and the asymptotic behavior  (see e.g. \cite{BH1980,BGV2012,FS2000,BF2021,A2021,JRX2023,AP1981,BSV2015}). When the diffusion operator is nonlinear, see \cite{BDGLS2023} for systematic introduction on doubly non-linear parabolic equations without weights.

However, to the best of our knowledge, less work is dedicated to investigating the weighted cases in \eqref{PROBLEM06}. When the diffusion operate is linear, that is, when $p=2$, we refer to \cite{MZ2023,JX2022} for weighted fast diffusion equations and \cite{JRX2023} for weighted porous medium equations. With regard to the case when $q=1$ and $p>2$, see \cite{MZ2024}. From Theorem 3.11 in \cite{JX2022}, Theorem 1.6 in \cite{MZ2023}, Theorem 2.10 in \cite{JRX2023} and Theorem 1.1 in \cite{MZ2024}, we see that the decay rate exponent $\alpha$ of the solution near the degenerate or singular points of the weights is just known to be in $(0,1)$ but its value is not explicit. Therefore, a natural problem is to solve the optimal decay rate of the solution near these degenerate or singular points of the weights. In fact, we can use the standard separation of variables method to obtain exact solution for heat conduction equation with monomial weight $|x|^{\theta}$ in cylindrical domain. The value of $\theta$ can affect the diffusion rate in time variable with either enhancement or reduction. Based on these above facts, the problem of accurately quantifying the enhancement and reduction effect induced by weighted diffusion operators may have a potential application in manufacturing porous medium materials with special permeation rates and controlling diffusion processes according to the needs of the industry.

Denote
\begin{align*}
&d\mu_{1}=w_{1}dx=|x'|^{\theta_{1}}|x|^{\theta_{2}}|x_{n}|^{\theta_{3}}dx,\quad d\mu_{2}=w_{2}dx=|x'|^{\theta_{4}}|x|^{\theta_{5}}|x_{n}|^{\theta_{6}}dx, \notag\\ &d\mu_{3}=w_{3}dx=|x'|^{\theta_{7}}|x|^{\theta_{8}}|x_{n}|^{\theta_{9}}dx.
\end{align*}
Based on the investigations in this paper, we now sum up the following three intriguing problems for readers' convenience.

{\bf P1.} Find the optimal ranges of parameters $s,p,q,\beta$ and $\theta_{i}$, $i=1,2,...,9$ such that the following interpolation inequality holds: for any $u\in C^{\infty}_{0}(\mathbb{R}^{n})$,
\begin{align*}
\|u\|_{L^{s}(\mathbb{R}^{n},w_{1})}\leq C\|\nabla u\|^{\beta}_{L^{p}(\mathbb{R}^{n},w_{2})}\|u\|^{1-\beta}_{L^{q}(\mathbb{R}^{n},w_{3})},
\end{align*}
where the constant $C$ depends only on $n,s,p,q,\beta$ and $\theta_{i}$, $i=1,2,...,9$.

{\bf P2.} Find the optimal ranges of parameters $p,\gamma$ and $\theta_{i}$, $i=1,2,...,6$ such that the following anisotropic weighted Poincar\'{e} inequality holds: for any $u\in W^{1,p}(B_{R},w_{2})$, $R>0$,
\begin{align*}
\int_{B_{R}}\bigg|u-\frac{1}{|B_{R}|_{\mu_{1}}}\int_{B_{R}}ud\mu_{1}\bigg|^{p}d\mu_{1}\leq CR^{\gamma}\int_{B_{R}}|\nabla u|^{p}d\mu_{2},
\end{align*}
for some constant $C$ depending only on $n,p$ and $\theta_{i},$ $i=1,2,...,6$.

{\bf P3.} Solve the optimal decay rate of the solution to problem \eqref{PROBLEM06} near the singular or degenerate points of the weights and meanwhile classify the enhancement and reduction effect on the diffusion arising from anisotropic weights.

%\noindent{\bf{\large Data Availability Statement.}} The data used to support the findings of this study are available from the corresponding author upon request.

\noindent{\bf{\large Acknowledgements.}} This work was supported in part by the National Key research and development program of
China (No. 2022YFA1005700 and 2020YFA0712903). C. Miao was partially supported by the National Natural Science Foundation of China (No. 12371095 and 12071043).

%\noindent{\bf{\large Statements.}}

%\noindent{\bf{\large Acknowledgements.}}


\begin{thebibliography}{99}


\bibitem{A2021} G. Akagi, Rates of convergence to non-degenerate asymptotic profiles for fast diffusion via energy methods. Arch. Ration. Mech. Anal. 247 (2023), no. 2, Paper No. 23, 38 pp.

\bibitem{AL1983} H.W. Alt and S. Luckhaus, Quasilinear elliptic-parabolic differential equations. Math. Z. 183 (1983), no. 3, 311-341.

\bibitem{AP1981} D.G. Aronson and L.A. Peletier, Large time behaviour of solutions of the porous medium equation in bounded domains. J. Differential Equations 39 (1981), no. 3, 378-412.

\bibitem{BT2002} M. Badiale and G. Tarantello, A Sobolev-Hardy inequality with applications to a nonlinear elliptic equation arising in astrophysics. Arch. Ration. Mech. Anal. 163 (2002), no. 4, 259-293.

\bibitem{BCG2006} H. Bahouri, J.-Y. Chemin and I. Gallagher, Refined Hardy inequalities. Ann. Sc. Norm. Super. Pisa Cl. Sci. (5) 5 (2006), no. 3, 375-391.

\bibitem{BH1980} J.B. Berryman and C.J. Holland, Stability of the separable solution for fast diffusion. Arch. Rational Mech. Anal. 74 (1980), 379-388.

\bibitem{BDMS2018} V. B\"{o}gelein, F. Duzaar, P. Marcellini and C. Scheven, Doubly nonlinear equations of porous medium type. Arch. Ration. Mech. Anal. 229 (2018), no. 2, 503-545.

\bibitem{BDMS201802} V. B\"{o}gelein, F. Duzaar, P. Marcellini and C. Scheven, A variational approach to doubly nonlinear equations. Atti Accad. Naz. Lincei Rend. Lincei Mat. Appl. 29 (2018), no. 4, 739-772.

\bibitem{BDGLS2023} V. B\"{o}gelein, F. Duzaar, U. Gianazza, N. Liao and C. Scheven, H\"{o}lder Continuity of the Gradient of Solutions to Doubly Non-Linear Parabolic Equations. arXiv:2305.08539.

\bibitem{BFR2017} M. Bonforte, A. Figalli and X. Ros-Oton, Infinite speed of propagation and regularity of solutions to the fractional porous medium equation in general domains. Comm. Pure Appl. Math. 70 (2017), no. 8, 1472-1508.

\bibitem{BFV2018} M. Bonforte, A. Figalli and J.L. V\'{a}zquez, Sharp global estimates for local and nonlocal porous medium-type equations in bounded domains. Anal. PDE 11 (2018), no. 4, 945-982.

\bibitem{BF2021} M. Bonforte and A. Figalli, Sharp extinction rates for fast diffusion equations on generic bounded domains. Comm. Pure Appl. Math. 74 (2021), no. 4, 744-789.

\bibitem{BGV2012} M. Bonforte, G. Grillo and J.L. V\'{a}zquez, Behaviour near extinction for the Fast Diffusion Equation on bounded domains. J. Math. Pures Appl. (9) 97 (2012), no. 1, 1-38.

\bibitem{BV2010} M. Bonforte, J.L. V\'{a}zquez, Positivity, local smoothing, and Harnack inequalities for very fast diffusion equations. Adv. Math. 223 (2010), no. 2, 529-578.

\bibitem{BSV2015} M. Bonforte, Y. Sire and J.L. V\'{a}zquez, Existence, uniqueness and asymptotic behaviour for fractional porous medium equations on bounded domains. Discrete Contin. Dyn. Syst. 35 (2015), no. 12, 5725-5767.

%\bibitem{CD1988} Y.Z. Chen and E. DiBenedetto, On the local behavior of solutions of singular parabolic equations. Arch. Rational Mech. Anal. 103 (1988), no. 4, 319-345.

\bibitem{CKN1984} L. Caffarelli, R. Kohn and L. Nirenberg, First order interpolation inequalities with weights, Composition Math. 53 (1984), 259-275.

\bibitem{CR2013} X. Cabr\'{e} and X. Ros-Oton, Sobolev and isoperimetric inequalities with monomial weights. J. Differential Equations 255 (2013), no. 11, 4312-4336.

\bibitem{CRS2016} X. Cabr\'{e}, X. Ros-Oton and J. Serra, Sharp isoperimetric inequalities via the ABP method. J. Eur. Math. Soc. (JEMS) 18 (2016), no. 12, 2971-2998.

%\bibitem{CS1984} F. Chiarenza and R.P. Serapioni, A Harnack inequality for degenerate parabolic equations. Comm. Partial Differential Equations 9 (1984), no. 8, 719-749.

\bibitem{CW1985} S. Chanillo and R.L. Wheeden, Weighted Poincar\'{e} and Sobolev inequalities and estimates for weighted Peano maximal functions. Amer. J. Math. 107 (1985), no. 5, 1191-1226.

\bibitem{CW2006} S.-K. Chua and R.L. Wheeden, Estimates of best constants for weighted Poincar\'{e} inequalities on convex domains. Proc. London Math. Soc. (3) 93 (2006), no. 1, 197-226.

\bibitem{DK2007} P. Daskalopoulos and C. Kenig, Degenerate diffusions. Initial value problems and local regularity theory. EMS Tracts in Mathematics, 1. European Mathematical Society (EMS), Z\"{u}rich, 2007.

%\bibitem{DH2012} , Heat Conduction, third ed., John Wiley $\&$ Sons, Inc., 2012.

\bibitem{FP2013} P.M.N. Feehan and C.A. Pop, A Schauder approach to degenerate-parabolic partial differential equations with unbounded coefficients. J. Differential Equations 254 (2013), no. 12, 4401-4445.

\bibitem{GG1982} M. Giaquinta and E. Giusti, On the regularity of the minima of variational integrals. Acta Math. 148 (1982), 31-46.

\bibitem{DD2023} R. Duarte and J. Drumond Silva, Weighted Gagliardo-Nirenberg interpolation inequalities. J. Funct. Anal. 285 (2023), no. 5, Paper No. 110009, 49 pp.

\bibitem{D1957} E. De Giorgi, Sulla differenziabilit$\grave{\mathrm{a}}$ e l'analiticit$\grave{\mathrm{a}}$ delle estremali degli integrali multipli regolari. Mem. Accad. Sci. Torino. Cl. Sci. Fis. Mat. Nat. (3) 3 (1957), 25-43.


\bibitem{DLY2021} H.J. Dong, Y.Y. Li and Z.L. Yang, Optimal gradient estimates of solutions to the insulated conductivity problem in dimension greater than two. J. Eur. Math. Soc. DOI 10.4171/JEMS/1432

\bibitem{DLY2022} H.J. Dong, Y.Y. Li and Z.L. Yang, Gradient estimates for the insulated conductivity problem: the non-umbilical case. J. Math. Pures Appl. (9) 189 (2024), Paper No. 103587, 37 pp.

\bibitem{DP2023} H.J. Dong and T. Phan, Weighted mixed-norm $L_{p}$ estimates for equations in non-divergence form with singular coefficients: the Dirichlet problem. J. Funct. Anal. 285 (2023), no. 2, Paper No. 109964, 43 pp.

\bibitem{DPT2023} H.J. Dong, T. Phan and H.V. Tran, Degenerate linear parabolic equations in divergence form on the upper half space. Trans. Amer. Math. Soc. 376 (2023), no. 6, 4421-4451.

\bibitem{DPT202302} H.J. Dong, T. Phan and H.V. Tran, Nondivergence form degenerate linear parabolic equations on the upper half space. J. Funct. Anal. 286 (2024), no. 9, Paper No. 110374, 53 pp.

\bibitem{DK1992} E. DiBenedetto and Y.C. Kwong, Harnack estimates and extinction profile for weak solution of certain singular parabolic equations. Trans. Amer. Math. Soc. 330 (2) (1992), 783-811.

\bibitem{DGV2012} E. DiBenedetto, U. Gianazza and V. Vespri, Harnack's inequality for degenerate and singular parabolic equations. Springer Monographs in Mathematics. Springer, New York, 2012.


\bibitem{DKV1991} E. DiBenedetto, Y.C. Kwong and V. Vespri, Local space-analyticity of solutions of certain singular parabolic equations. Indiana Univ. Math. J. 40 (2) (1991), 741-765.

\bibitem{DLK2013} L. Diening, P. Lindqvist and B. Kawohl, Mini-Workshop: The $p$-Laplacian operator and applications. Oberwolfach Rep. 10 (2013), no. 1, 433-482.

\bibitem{FKS1982} E.B. Fabes, C.E. Kenig and R.P. Serapioni, The local regularity of solutions of degenerate elliptic equations. Comm. Partial Differential Equations 7 (1982), no. 1, 77-116.

\bibitem{FS2000} E. Feireisl and F. Simondon, Convergence for semilinear degenerate parabolic equations in several space dimension. J. Dynam. Differential Equations 12 (2000), 647-673.

%\bibitem{GW1991} C.E. Guti\'{e}rrez and R.L. Wheeden, Harnack's inequality for degenerate parabolic equations. Comm. Partial Differential Equations 16 (1991), no. 4-5, 745-770.

\bibitem{G2014} L. Grafakos, Classical Fourier analysis. Third edition. Graduate Texts in Mathematics, 249. Springer, New York, 2014.

\bibitem{HKM2006} J. Heinonen, T. Kilpel\"{a}inen and O. Martio, Nonlinear potential theory of degenerate elliptic equations. Unabridged republication of the 1993 original. Dover Publications, Inc., Mineola, NY, 2006.

\bibitem{JX2019} T.L. Jin and J.G. Xiong, Optimal boundary regularity for fast diffusion equations in bounded domains. Amer. J. Math. 145 (2023), no. 1, 151-219.

\bibitem{JX2022} T.L. Jin and J.G. Xiong, Regularity of solutions to the Dirichlet problem for fast diffusion equations. arXiv:2201.10091.

\bibitem{JRX2023} T.L. Jin, X. Ros-Oton and J.G. Xiong, Optimal regularity and fine asymptotics for the porous medium equation in bounded domains. J. Reine Angew. Math. 809 (2024), 269-300.

\bibitem{JX2023} T.L. Jin and J.G. Xiong, H\"{o}lder regularity for the linearized porous medium equation in bounded domains. Anal. Theory Appl. 40 (2024), no. 2, 111-171.
%\bibitem{K2022} H. Kang, Quantitative analysis of field concentration in presence of closely located inclusions of high contrast, Proceedings of the International Congress of Mathematicians 2022, to appear.


\bibitem{K1988} Y.C. Kwong, Interior and boundary regularity of solutions to a plasma type equation. Proc. Amer. Math. Soc. 104 (1988), no. 2, 472-478.

\bibitem{K1996} P.K. Kythe, Fundamental solutions for differential operators and applications. Birkh\"{a}user Boston, Inc., Boston, MA, 1996. xxiv+414 pp.

\bibitem{L1944} L. Landau, A new exact solution of Navier-Stokes equations. C. R. (Doklady) Acad. Sci. URSS (N.S.) 43, (1944), 286-288.

\bibitem{L1986} C.S. Lin, Interpolation inequalities with weights. Comm. Partial Differential Equations 11 (1986), no. 14, 1515-1538.

\bibitem{L2019} P. Lindqvist, Notes on the stationary $p$-Laplace equation. SpringerBriefs in Mathematics. Springer, Cham, 2019.

\bibitem{LL2022} E. Lindgren and P. Lindqvist, On a comparison principle for Trudinger's equation. Adv. Calc. Var. 15 (2022), no. 3, 401-415.

\bibitem{LLY201801} L. Li, Y.Y. Li, and X.K. Yan, Homogeneous solutions of stationary Navier-Stokes equations with isolated singularities on the unit sphere. I. One singularity. Arch. Ration. Mech. Anal. 227 (2018), no. 3, 1091-1163.

\bibitem{LLY201802} L. Li, Y.Y. Li, and X.K. Yan, Homogeneous solutions of stationary Navier-Stokes equations with isolated singularities on the unit sphere. II. Classification of axisymmetric no-swirl solutions. J. Differential Equations 264 (2018), no. 10, 6082-6108.

\bibitem{LY2021} Y.Y. Li and X.K. Yan, Asymptotic stability of homogeneous solutions of incompressible stationary Navier-Stokes equations.
J. Differential Equations 297 (2021), 226-245.

\bibitem{LY2023} Y.Y. Li and X.K. Yan, Anisotropic Caffarelli-Kohn-Nirenberg type inequalities. Adv. Math. 419 (2023), Paper No. 108958, 44 pp.

%\bibitem{M1961} J. Moser, On Harnack's theorem for elliptic differential equations. Comm. Pure Appl. Math. 14 (1961), 577-591.

%\bibitem{M1964} J. Moser, A Harnack inequality for parabolic differential equations. Comm. Pure Appl. Math. 17 (1964), 101-134.

\bibitem{MZ2023} C.X. Miao and Z.W. Zhao, Local regularity for nonlinear elliptic and parabolic equations with anisotropic weights. Proc. Edinb. Math. Soc. (2) 66 (2023), no. 2, 391-436.

\bibitem{MZ2024} C.X. Miao and Z.W. Zhao, Local behavior for solutions to anisotropic weighted quasilinear degenerate parabolic equations. Ann. Scuola Norm. Sup. Pisa Cl. Sci. (2024), DOI: 10.2422/2036-2145.202311\_004

\bibitem{NS2018} H. Nguyen and M. Squassina, Fractional Caffarelli-Kohn-Nirenberg inequalities. J. Funct. Anal. 274 (2018), no. 9, 2661-2672.

\bibitem{NS2019} H. Nguyen and M. Squassina, On Hardy and Caffarelli-Kohn-Nirenberg inequalities. J. Anal. Math. 139 (2019), no. 2, 773-797.

\bibitem{S1983} P.E. Sacks, Continuity of solutions of a singular parabolic equation. Nonlinear Anal. 7 (1983), no. 4, 387-409.

%\bibitem{S1964} J. Serrin, Local behavior of solutions of quasi-linear equations. Acta Math. 111 (1964), 247-302.
%
%\bibitem{S2010} M. Surnachev, A Harnack inequality for weighted degenerate parabolic equations. J. Differential Equations 248 (2010), no. 8, 2092-2129.
%
%\bibitem{T1967} N.S. Trudinger, On Harnack type inequalities and their application to quasilinear elliptic equations. Comm. Pure Appl. Math. 20 (1967), 721-747.

\bibitem{T1968} N.S. Trudinger, Pointwise estimates and quasilinear parabolic equations. Comm. Pure Appl. Math. 21 (1968), 205-226.

\bibitem{V2007} J.L. V\'{a}zquez, The porous medium equation. Mathematical theory. Oxford Mathematical Monographs. The Clarendon Press, Oxford University Press, Oxford, 2007.



\end{thebibliography}
\end{document}